\numberwithin{equation}{section}
\newcommand{\Z}{{\mathbb Z}}
\newcommand{\R}{{\mathbb R}}
\def\p{\partial}
\def\om{\omega}
\newtheorem{Thm}{Theorem}[section]
\newtheorem{thm}[Thm]{Theorem}
\newtheorem{coro}[Thm]{Corollary}
\newtheorem{rem}[Thm]{Remark}
\newtheorem{lem}[Thm]{Lemma}
\newtheorem{prop}[Thm]{Proposition}
\newcommand{\beq}{\begin{equation}}
\newcommand{\eeq}{\end{equation}}
\newcommand{\ben}{\begin{eqnarray}}
\newcommand{\een}{\end{eqnarray}}
\newcommand{\beno}{\begin{eqnarray*}}
\newcommand{\eeno}{\end{eqnarray*}}
\begin{document}

\title[Stability of Couette flow]{Stability of Couette flow for 2D Boussinesq system with vertical dissipation}

\author[Wen Deng, Jiahong Wu and Ping Zhang]{Wen Deng$^{1}$, Jiahong Wu$^{2}$ and Ping Zhang$^{3}$}

\address{$^{1}$ Academy of Mathematics and Systems Science and Hua Loo-Keng Key Laboratory of Mathematics, Chinese Academy of Sciences,  Beijing 100049, China}

\email{dengwen@amss.ac.cn}

\address{$^2$ Department of Mathematics, Oklahoma State University, Stillwater, OK 74078, United States}

\email{jiahong.wu@okstate.edu}

\address{$^3$ Academy of
Mathematics $\&$ Systems Science and  Hua Loo-Keng Key Laboratory of
Mathematics, The Chinese Academy of Sciences, China, and School of Mathematical Sciences, University of Chinese Academy of Sciences, Beijing 100049, China }

\email{zp@amss.ac.cn}

\vskip .2in
\begin{abstract}
	This paper establishes the nonlinear stability of the Couette flow for the 2D Boussinesq
	equations with only vertical dissipation. The Boussinesq equations concerned here model
	buoyancy-driven fluids such as atmospheric and oceanographic flows. Due to the presence of
	the buoyancy forcing, the energy of the standard Boussinesq equations could grow in time.
	It is the enhanced dissipation created by the linear non-self-adjoint operator $y\partial_x -\nu\partial_{yy}$
	in the perturbation equation that makes the nonlinear stability possible. When the initial perturbation from the Couette flow $(y, 0)$ is no more than the viscosity to a suitable power (in the Sobolev space $H^b$ with $b>\frac43$), we prove that the solution of the
	2D Boussnesq system with only vertical dissipation on $\mathbb T\times \mathbb R$ remains close to the Couette at the same order. A special consequence of this result is the stability
	of the Couette for the 2D Navier-Stokes equations with only vertical dissipation.
	\end{abstract}
	
\maketitle

\section{introduction}

\vskip .1in
The Boussinesq system reflects the basic physics laws obeyed by buoyancy-driven fluids. It is one of the most frequently used models for atmospheric and oceanographic flows and serves as the centerpiece in the study of the Rayleigh–Bénard convection (see, e.g., \cite{ConD, DoeringG,Maj,Pe}). The Boussinesq equations are mathematically significant.
The 2D Boussinesq equations serve as a lower dimensional model
of the 3D hydrodynamics equations. In fact, the 2D Boussinesq equations retain some key features of the 3D Euler and Navier-Stokes equations such as the vortex stretching mechanism.  The inviscid 2D Boussinesq equations can be identified as the Euler equations for the 3D axisymmetric swirling flows \cite{MaBe}. Furthermore, the Boussinesq equations have some special characteristics of their own and offer many opportunities for new discoveries.

\vskip .1in
Due to their broad physical applications and mathematical significance,
the Boussinesq equations have recently attracted
considerable interests. Two fundamental problems, the global regularity problem and the stability problem, have been among the main driving
forces in advancing the mathematical theory on the Boussinesq
equations. Significant progress has been made on the global regularity of the 2D Boussinesq equations, especially those with only partial or fractional dissipation or no dissipation
at all.
Our attention here will be focused on the stability problem. The study of the stability problem  on two physically important steady states
has gained strong momentum. The first steady state is the hydrostatic equilibrium, which  is a prominent topic in fluid dynamics and astrophysics. Understanding this stability problem may help gain insight into some weather phenomena. Important progress has been made on the stability and
large-time behavior (\cite{CCL, DWZZ, TWZZ, Wan}). The second steady state is the shear flow, which is the focus of this paper.
The aim here is to fully understand the stability of perturbations near the Couette
flow and their large-time behavior. Our consideration will cover both the Boussinesq equations
with full dissipation and the Boussinesq equations with only vertical dissipation. Our emphasis
is on the case when the dissipation is degenerate and only in the vertical direction.

\vskip .1in
The 2D Boussinesq system with full dissipation is given by
\begin{equation}\label{eq.B}
\begin{cases}
\p_tu+(u\p_x +v\p_y) u=-\p_x p+\nu\Delta u, \\
\p_tv+(u\p_x+v\p_y) v=-\p_y p+\nu\Delta v+\theta,\\
\p_xu+\p_yv=0,\\
\p_t\theta+(u\p_x +v\p_y)\theta=\mu\Delta \theta,
\end{cases}
\end{equation}
where ${\bf u}=(u,v)$ denotes the 2D velocity field, $p$ the pressure, $\theta$ the temperature,
$\nu$ the viscosity and $\mu$ the thermal diffusivity. The first three equations in (\ref{eq.B}) are the incompressible Navier-Stokes equation
with buoyancy forcing in the vertical direction. The last equation is a balance of the temperature
convection and diffusion. The spatial domain $\Omega$ here is taken to  be
$$
\Omega={\mathbb T}\times\R
$$
with $\mathbb T=[0, 2\pi]$ being the periodic box  and $\mathbb R$ being the whole line.  In suitable physical regimes or under suitable scaling, the Boussinesq
equations may involve only vertical dissipation (\cite{MajG}),  namely
\begin{equation}\label{eq.BV}
\begin{cases}
\p_tu+(u\p_x +v\p_y) u=-\p_x p+ \nu\p_{yy} u,\\
\p_tv+(u\p_x+v\p_y) v=-\p_y p+ \nu \p_{yy} v + \theta,\\
\p_xu+\p_yv=0,\\
\p_t\theta+(u\p_x +v\p_y)\theta=\mu\p_{yy}\theta.
\end{cases}
\end{equation}
Cao and Wu previously examined the 2D Boussinesq system with only vertical dissipation
and established its global regularity \cite{CaoWu1}.
The Couette flow,
$$
{\bf u}_{sh}=(y,0), \quad p_{sh}=0, \quad \theta_{sh}=0,
$$
is clearly a stationary solution of \eqref{eq.B} and also of (\ref{eq.BV}). Our goal is to understand the stability and large-time behavior of perturbations near the Couette flow.
The perturbations
\begin{equation*}\label{}
\widetilde u=u-y,\qquad \widetilde v=v,\qquad \widetilde p=p,\qquad \widetilde \theta=\theta,
\end{equation*}
satisfy, in the case of full dissipation,
\begin{equation*}\label{}
\begin{cases}
\p_t\widetilde u+y\p_x\widetilde u+\widetilde v+(\widetilde {\bf u}\cdot\nabla) \widetilde u =-\p_x\widetilde p+\nu \Delta\widetilde u, \\
\p_t\widetilde v+y\p_x\widetilde v+(\widetilde {\bf u}\cdot\nabla )\widetilde v=-\p_y\widetilde p+\nu\Delta \widetilde v+\widetilde \theta ,\\
\p_x \widetilde u+\p_y\widetilde v=0,\\
\p_t\widetilde\theta+y\p_x\widetilde \theta+(\widetilde{\bf u}\cdot\nabla) \widetilde \theta=\mu\Delta\widetilde\theta.
\end{cases}
\end{equation*}
The corresponding perturbed vorticity near the steady vorticity $\om_{sh} =-1$
$$\widetilde \omega=\p_x\widetilde v-\p_y\widetilde u$$
verifies, together with $\widetilde \theta$, the following system
\begin{equation}\label{S1eq.p}
\begin{cases}
\p_t\widetilde \omega+y\p_x\widetilde \omega+(\widetilde{\bf u}\cdot\nabla) \widetilde \omega =\nu \Delta\widetilde \omega+\p_x\widetilde \theta ,\\
\p_t\widetilde\theta+y\p_x\widetilde \theta+(\widetilde{\bf u}\cdot\nabla) \widetilde \theta=\mu\Delta\widetilde\theta,\\
\widetilde{\bf u}=-\nabla^\perp(-\Delta )^{-1}\widetilde\omega.
\end{cases}
\end{equation}
In the case when there is only vertical dissipation,
 the vorticity perturbation $\widetilde \om$ and
the temperature perturbation $\widetilde \theta$ satisfy
\begin{equation}\label{vt}
\begin{cases}
\p_t\widetilde \omega+y\p_x\widetilde \omega+(\widetilde{\bf u}\cdot\nabla) \widetilde \omega =\nu \p_{yy}\widetilde \omega+\p_x\widetilde \theta ,\\
\p_t\widetilde\theta+y\p_x\widetilde \theta+(\widetilde{\bf u}\cdot\nabla) \widetilde \theta=\mu\p_{yy}\widetilde\theta,\\
\widetilde{\bf u}=-\nabla^\perp(-\Delta )^{-1}\widetilde\omega.
\end{cases}
\end{equation}

\vskip .1in
The stability problem proposed for study here on (\ref{S1eq.p}) or (\ref{vt}) is not trivial.
Due to the presence of the buoyancy forcing term, the Sobolev norms or even the $L^2$-norm of the
velocity field  could grow in
time if the two linear terms $y \p_x \widetilde \om$ and $y \p_x \widetilde \theta$ were not included in (\ref{S1eq.p}) or (\ref{vt}).
In fact, Brandolese and Schonbek have shown in \cite{BrS} that the $L^2$-norm of the
velocity to the Boussinesq system with full viscous dissipation and thermal diffusion can grow
in time even for very nice initial data (say, data that are smooth, fast  spatial
decaying and small in some strong norm). The stability of the Couette flow on (\ref{S1eq.p})
and (\ref{vt}) is only possible because of the enhanced dissipation generated by
the non-self-adjoint operator $y\p_x-\nu\p_{yy}$, which is the linear part of the
system (\ref{vt}). Even though the linear operator $y \p_x - \nu \p_{yy}$ involves
only vertical dissipation,
the non-commutativity between its real part and imaginary part actually creates smoothing effect in the horizontal direction, a phenomenon that is called the hypoellipticity.
Operators of this type are investigated by H\"ormander \cite{Ho}.
For the standard heat equation $\p_t f =\nu \Delta f$, the dissipation time scale is $O(\nu^{-1})$ while, for the drift diffusion equations
$$
\p_t f + y\p_x f =\nu \Delta f \quad \mbox{and}\quad
\p_t f + y\p_x f =\nu \p_{yy} f,
$$
the dissipation time scale is $O(\nu^{-\frac13})$, which is much faster than $O(\nu^{-1})$ for small $\nu$. A more detailed explanation will be provided later.  This enhanced dissipation
effect plays an extremely important role in the stability problem studied here.

\vskip .1in
The phenomenon of enhanced dissipation has been widely observed and studied
in physics literature (see, e.g., \cite{Bern, Lat, Kevin, Rhi}). It has recently
attracted enormous attention from the mathematics community and significant progress has been
made. One of the earliest rigorous results on the enhanced dissipation is obtained by
Constantin, Kiselev, Ryzhik and Zlatos on the enhancement of diffusive mixing \cite{Con}. Many remarkable results have since been established. In particular, the stability of the
shear flows to passive scale equations and to the Navier-Stokes equations has been
intensively investigated in a sequence of outstanding papers (see, e.g., \cite{BedCoti0, BedCoti1,
Bed1, Bed2, Bed3, Bed4, Mas0, Mas1, Wei0, Wei1}).

\vskip .1in
The study of the stability problem on the Boussinesq system near the shear flow is very recent.
The work of Tao and Wu \cite{Tao} was able to establish the stability and the enhanced dissipation
phenomenon for the linearized 2D Boussinesq equations with only vertical dissipation, using the method of hypocoercivity introduced by C. Villani \cite{Vi}.
The Boussinesq system is different from the Navier-Stokes equations. The buoyancy
force in the velocity equation could drive the growth of the energy and more generally the
growth of the Sobolev norms. In addition, when there is only vertical dissipation, the
control of the nonlinear terms becomes much more difficult. New techniques and estimates have
to be created in order to handle the degenerate dissipation. It also appears that no previous work
has handled the degenerate case.  Since the Boussinesq system reduces to the Navier-Stokes
equation when $\theta$ is identically zero, the stability results presented in this paper fill the gap on the Navier-Stokes equations with only vertical dissipation.

\subsection{Results}

\vskip .1in
We present three main results. The first result is on the linearized Boussinesq equations
with either full dissipation or with only vertical dissipation. The upper bounds
are explicit and sharp.
The second result assesses the nonlinear stability and
large-time behavior of the Boussinesq system with full dissipation. The third stability result is for the case with only vertical dissipation. Both nonlinear stability results are presented
in order to make a direct comparison between the full dissipation and the degenerate dissipation cases.

\vskip .1in
For notational convenience, we shall write $\omega$ for $\widetilde \omega$ and $\theta$ for $\widetilde \theta$ from now on.
To explain
the linear stability result, we rewrite the equation for both the full dissipation case and
the vertical dissipation case as
\begin{equation}\label{both}
\begin{cases}
\p_t\omega+y\p_x \omega=\nu (\sigma \p_{xx} + \p_{yy}) \omega+\p_x \theta ,\\
\p_t\theta+y\p_x \theta=\mu (\sigma \p_{xx} + \p_{yy}) \theta,\\
\omega|_{t=0}=\omega^{(0)},\quad \theta|_{t=0}=\theta^{(0)}.
\end{cases}
\end{equation}
$\sigma=1$ corresponds to the full dissipation case while $\sigma=0$ to the vertical
dissipation case. To help understand the stability results presented below, we
explicitly solve the linear equation
\beq \label{xxx}
\p_t F +y\p_x F = \nu (\sigma \p_{xx} + \p_{yy}) F, \qquad F(x,y, 0) =F_0(x,y).
\eeq
Taking the Fourier transform yields
$$
\p_t \widehat F - k \p_\xi \widehat F = -\nu (\sigma k^2 + \xi^2) \widehat F, \qquad \widehat F(k,\xi, 0)= \widehat F_0(k,\xi),
$$
where the Fourier transform is given by
$$
\widehat{F}(k, \xi) =\mathcal F F= \int_{y\in \mathbb R} \int_{x\in \mathbb T} F(x,y) e^{-i (kx + \xi y)} \,dxdy.
$$
Making the natural change of variables
$$
\eta := \xi + k t, \qquad H(k, \eta, t) := \widehat F(k, \xi, t) ,
$$
we find that
$$
\p_t H(k, \eta, t) = - \nu( \sigma k^2 + (\eta -k t)^2)\, H(k, \eta, t), \qquad H(k, \eta, 0) = \widehat F_0(k, \eta).
$$
Integrating in time yields
$$
 H(k, \eta, t) = \widehat F_0(k, \eta) e^{- \nu \int_0^t( \sigma k^2 + (\eta -k \tau)^2)\,d\tau}.
$$
Therefore,
\ben
\widehat F(k, \xi,t) &=& H(k,\eta,t) = \widehat F_0(k, \xi + k t)\, e^{-\nu\int_0^t \sigma k^2 + (\xi + k(t-\tau))^2\,d\tau} \notag\\
&=& \widehat F_0(k, \xi + k t)\, e^{-\nu( \sigma k^2 + \xi^2) t}\, e^{-\frac13 \nu k^2 t^3 - \nu k\xi t^2}. \label{rep}
\een
This explicit representation reflects the enhanced dissipation. Even when there is only vertical dissipation, namely $\sigma =0$, the solution is dissipated and regularized in both directions. The dissipation time scale is $O(\nu^{-\frac13})$, which is much faster than the standard
dissipation time scale $O(\nu^{-1})$.  Clearly the dissipation rate is inhomogeneous and depends on the frequencies $k$.

\vskip .1in
Solutions of (\ref{both}) share the same properties as that of (\ref{xxx}). The linear stability results on (\ref{both}) are stated in Proposition \ref{linst} and Proposition \ref{linst2}. To make the statement precise, we define, for $f=f(x,y)$ with $(x,y)
\in \mathbb T\times \mathbb R$ and $k\in \mathbb Z$,
$$
f_k(y):=\frac{1}{2\pi}\int_{\mathbb T}f(x,y)e^{-ikx}dx.
$$
In addition, we write $D=\frac1i\p$. The linear stability result for (\ref{both}) can then be stated as follows.

\begin{prop} \label{linst}
	Let $(\omega,\theta)$ be the solution to \eqref{both} with initial data $(\omega^{(0)},\theta^{(0)})$. There exist constants $c>0$, $C>0$ such that for any $k\in\Z$, $t>0$,
	\begin{equation}\label{S1.1prop1eq1}
	\begin{split}
	&\|\theta_k(t)\|_{L_y^2}\leq C\|\theta_k^{(0)}\|_{L_y^2}e^{-c\mu^\frac13|k|^\frac23t},\\
	&\|\omega_k(t)\|_{L_y^2}\leq C\big(\|\omega_k^{(0)}\|_{L_y^2}+(\nu\mu)^{-\frac16}|k|^\frac13\|\theta^{(0)}_k\|_{L_y^2}\big)e^{-c\nu^\frac13|k|^\frac23t}.
	\end{split}
	\end{equation}
	More generally, assuming that $\nu\lesssim\mu$, for $N\geq0$, there exist $c_N>0$ and  $C_N>0$ such that for any $k\in\Z$, $t>0$,
	\begin{equation}\label{S1.1prop1eq2}
	\begin{split}
	\|D_y^N\theta_k(t)\|_{L_y^2}&\leq C_Ne^{-c_N\mu^\frac13|k|^\frac23 t}\Big(\|D_y^N\theta_k^{(0)}\|_{L_y^2}+(\mu^{-1}|k|)^{\frac{N}3}\|\theta_k^{(0)}\|_{L_y^2}\Big), \\
	\|D_y^N\omega_k(t)\|_{L_y^2}&\leq C_Ne^{-c_N\nu^\frac13|k|^\frac23t}\Big(\|D_y^N\omega_k^{(0)}\|_{L_y^2}+(\nu\mu)^{-\frac16}|k|^\frac13\|D_y^N\theta_k^{(0)}\|_{L_y^2}\\
	 &\qquad\qquad\qquad\quad+(\nu^{-1}|k|)^{\frac{N}3}\big(\|\omega_k^{(0)}\|_{L_y^2}+(\nu\mu)^{-\frac16}|k|^\frac13\|\theta_k^{(0)}\|_{L_y^2}\big)\Big).
	\end{split}
	\end{equation}
\end{prop}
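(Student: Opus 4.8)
The plan is to diagonalize in the horizontal variable: for each fixed $k\in\Z$ the pair $(\omega_k,\theta_k)$ solves a one–dimensional problem in $y$, and the homogeneous evolution $\p_t F+iky F=\nu(-\sigma k^2+\p_{yy})F$ is solved explicitly exactly as in \eqref{rep}. Passing to the shifted frequency $\eta=\xi+kt$ and writing $\Phi(t;\eta):=\int_0^t\big(\sigma k^2+(\eta-k\tau)^2\big)\,d\tau$, Plancherel gives $\|\theta_k(t)\|_{L_y^2}=\|\widehat{\theta_k^{(0)}}(\eta)\,e^{-\mu\Phi(t;\eta)}\|_{L^2_\eta}$, and similarly for the homogeneous part of $\omega$. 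The single elementary fact behind every estimate is the square completion $\Phi(t;\eta)\ge t(\eta-\tfrac{kt}{2})^2+\tfrac1{12}k^2t^3\ge\tfrac1{12}k^2t^3$, valid uniformly in $\eta$, together with the comparison $e^{-\frac{\lambda}{12}k^2t^3}\le C e^{-c\lambda^{1/3}|k|^{2/3}t}$ (which holds because $\sup_{r\ge0}(cr-\tfrac1{12}r^3)<\infty$ after setting $r=\lambda^{1/3}|k|^{2/3}t$). For $\theta$ this immediately yields $\sup_\eta e^{-\mu\Phi(t;\eta)}\le Ce^{-c\mu^{1/3}|k|^{2/3}t}$ and hence the first inequality in \eqref{S1.1prop1eq1}; the horizontal dissipation present when $\sigma=1$ only strengthens the bound, so both cases are handled at once.

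For $\omega$ I would use Duhamel's formula in the shifted frame. The homogeneous contribution is estimated exactly as for $\theta$ with $\mu$ replaced by $\nu$, giving the $\|\omega_k^{(0)}\|_{L^2}$ term. The coupling term is $ik\,\widehat{\theta_k^{(0)}}(\eta)\,G(\eta,t)$ with kernel
\[
G(\eta,t)=\int_0^t e^{-\nu(\Phi(t;\eta)-\Phi(s;\eta))}\,e^{-\mu\Phi(s;\eta)}\,ds,
\]
so that $\|(\text{coupling})\|_{L^2_\eta}\le |k|\,\|G(\cdot,t)\|_{L^\infty_\eta}\,\|\theta_k^{(0)}\|_{L^2}$. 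Everything rests on the kernel bound $\|G(\cdot,t)\|_{L^\infty_\eta}\le C(\nu\mu)^{-1/6}|k|^{-2/3}e^{-c\nu^{1/3}|k|^{2/3}t}$, which, multiplied by $|k|$, produces exactly the factor $(\nu\mu)^{-1/6}|k|^{1/3}$. The mechanism yielding the geometric mean $(\nu\mu)^{-1/6}=(\nu^{-1/3})^{1/2}(\mu^{-1/3})^{1/2}$ is Cauchy–Schwarz in time: suppressing $\eta$ and splitting $e^{-\nu(\Phi(t)-\Phi(s))}e^{-\mu\Phi(s)}=e^{-\frac\nu2\Phi(t)}\cdot e^{-\frac\nu2(\Phi(t)-\Phi(s))}e^{-(\mu-\frac\nu2)\Phi(s)}$ (here $\nu\lesssim\mu$ is used so that $\mu-\tfrac\nu2>0$), the prefactor $e^{-\frac\nu2\Phi(t)}\le Ce^{-c\nu^{1/3}|k|^{2/3}t}$ supplies the decay, while Cauchy–Schwarz on the remaining integral gives $\big(\int_0^t e^{-\nu(\Phi(t)-\Phi(s))}ds\big)^{1/2}\big(\int_0^t e^{-(2\mu-\nu)\Phi(s)}ds\big)^{1/2}$; by the same square completion each factor is controlled by an integral $\int_0^\infty e^{-c\lambda k^2r^3}dr=C\lambda^{-1/3}|k|^{-2/3}$, giving $C\nu^{-1/6}\mu^{-1/6}|k|^{-2/3}$. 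This is the analytic heart of the proposition and the step I expect to be the main obstacle: one must secure the sharp constant and the decay simultaneously and uniformly in the critical frequency $\eta/k$. Indeed, evaluating $G$ at $\eta=kt$ shows its decay is governed by $\min(\nu,\mu)^{1/3}|k|^{2/3}$, which is precisely why the hypothesis $\nu\lesssim\mu$ is what makes the stated $\nu^{1/3}$–rate correct.

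For the higher–order bounds \eqref{S1.1prop1eq2} I would note that in the shifted frame $D_y$ becomes multiplication by $\eta-kt$, so $\|D_y^N\theta_k(t)\|_{L^2}=\|(\eta-kt)^N\widehat{\theta_k^{(0)}}(\eta)e^{-\mu\Phi}\|_{L^2_\eta}$. Expanding $(\eta-kt)^N=\sum_j\binom Nj\eta^j(-kt)^{N-j}$, the top term $j=N$ retains $\|\eta^N\widehat{\theta_k^{(0)}}\|_{L^2}=\|D_y^N\theta_k^{(0)}\|_{L^2}$ together with the uniform bound on $e^{-\mu\Phi}$; for $j<N$ I would combine $\sup_\eta e^{-\mu\Phi}\le Ce^{-c\mu^{1/3}|k|^{2/3}t}$, the interpolation $\|\eta^j f\|\le\|\eta^N f\|^{j/N}\|f\|^{1-j/N}$ and Young's inequality to split each contribution into a piece bounded by $\|D_y^N\theta_k^{(0)}\|$ and a piece carrying $|kt|^{N}$, the latter absorbed through $\sup_t |kt|^N e^{-c\mu^{1/3}|k|^{2/3}t}\le C_N(\mu^{-1}|k|)^{N/3}$. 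This reproduces the two terms on the right of the $\theta$–inequality in \eqref{S1.1prop1eq2}. The weighted estimate for $\omega$ follows the same scheme: the homogeneous part is the $\nu$–analogue of the above, while for the coupling term one reuses the uniform kernel bound $\|G(\cdot,t)\|_{L^\infty_\eta}$ from the previous paragraph in place of $\sup_\eta e^{-\mu\Phi}$, so that the extra algebraic factors $(\nu\mu)^{-1/6}|k|^{1/3}$ and $(\nu^{-1}|k|)^{N/3}$ appear exactly as stated. The bookkeeping here is routine once the kernel estimate is in hand; the only genuine difficulty remains the uniform, sharp control of $G$.
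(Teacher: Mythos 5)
Your proposal is correct, but it proves the proposition by a genuinely different route than the paper. The paper never invokes the explicit solution in its proof of Proposition \ref{linst}: it runs a hypocoercivity-style energy estimate mode by mode, pairing the $\theta_k$ equation with $(1+M_k)\theta_k$, $M_k=\varphi(\mu^{\frac13}|k|^{-\frac13}D_y)$, extracting the enhanced decay from the commutator identity $[M_k,iky]=\mu^{\frac13}|k|^{\frac23}\varphi'(\mu^{\frac13}|k|^{-\frac13}D_y)$ together with the pointwise lower bound $\mu(\xi^2+\sigma k^2)(1+2\varphi)+\mu^{\frac13}|k|^{\frac23}\varphi'\geq\frac14\mu^{\frac13}|k|^{\frac23}$, then applying Gr\"onwall; the coupling $ik\theta_k$ is absorbed by Young's inequality and a time convolution of two exponentials, and the bounds \eqref{S1.1prop1eq2} are obtained by induction on $N$. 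You instead solve the linear system exactly (the computation the paper performs only in the introduction to derive \eqref{rep}) and reduce everything to the two uniform-in-$\eta$ lower bounds $\Phi(t;\eta)\geq\frac1{12}k^2t^3$ and $\Phi(t;\eta)-\Phi(s;\eta)\geq\frac1{12}k^2(t-s)^3$, with Duhamel plus Cauchy--Schwarz in time yielding the kernel bound $\|G(\cdot,t)\|_{L^\infty_\eta}\leq C(\nu\mu)^{-\frac16}|k|^{-\frac23}e^{-c\nu^{\frac13}|k|^{\frac23}t}$. Since the square completion is uniform in $\eta$, the ``critical frequency'' $\eta\approx kt$ you flag as the main obstacle causes no loss --- your own sketch already closes that step --- and your binomial-expansion-plus-interpolation treatment of $D_y^N$ replaces the paper's induction and is arguably cleaner. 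What the paper's method buys is robustness: the multiplier--commutator mechanism survives the nonlinear terms and is exactly what is redeployed (as ${\mathcal M}$ and $\Lambda_t^b$) in the proofs of Theorems \ref{Non1} and \ref{Non2}, whereas your explicit formula is confined to the linear problem. What your method buys is sharpness and transparency: in particular, it makes visible that the second inequality of \eqref{S1.1prop1eq1} genuinely requires $\nu\lesssim\mu$ for the stated $\nu^{\frac13}$ rate (your observation that $G$ decays only at rate $\min(\nu,\mu)^{\frac13}|k|^{\frac23}$ is correct); the paper places that hypothesis only before \eqref{S1.1prop1eq2}, but its own final integration, comparing $e^{-c\mu^{\frac13}|k|^{\frac23}s}$ against $e^{-\frac1{16}\nu^{\frac13}|k|^{\frac23}(t-s)}$, tacitly uses it in the same way. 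One cosmetic repair: your split needs $2\mu-\nu>0$, so if $\nu\lesssim\mu$ carries a large implicit constant you should peel off $e^{-\epsilon\nu\Phi(t;\eta)}$ with $\epsilon$ small rather than $e^{-\frac{\nu}{2}\Phi(t;\eta)}$, which changes nothing else in the argument.
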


\vskip .1in
A similar linear stability result for a slightly different domain was obtained in \cite{Tao}, but the proof presented here is different, simpler and  more compact. The estimates in Proposition
\ref{linst} can be converted into a more elegant statement that allows a direct comparison with
the nonlinear stability results to be presented. We explain and define a few notations.
(\ref{rep}) clearly reveals the distinction between
the zero mode case $k=0$ and the nonzero modes $k\not =0$. This triggers the definitions
\beq\label{ssp}
f_0(y):=\frac{1}{2\pi}\int_{\mathbb T}f(x,y) dx, \qquad f_{\not =}(x,y) =f(x,y) -f_0(y),
\eeq
which represents the projection onto $0$ frequency and the projection onto non-zero frequencies.
In the process of deriving (\ref{rep}), we made the change of variable $\eta = \xi + t k$, which
naturally invites the definition of the time-dependent elliptic operator, for $t\ge 0$,
\beq \label{elli}
\Lambda_t^2=1-\p_x^2-(\p_y+t\p_x)^2,
\eeq
or, in terms of its symbol, $\Lambda_t^2 (k, \xi) = 1 + k^2 + (\xi + t k)^2$. It is easy to check that the operator $\Lambda_t^b$ with any $b\in \mathbb R$ commutes with the differential operator with variable coefficients $\p_t + y\p_x$, namely
$$
\Lambda_t^b\, (\p_t + y\p_x) = (\p_t + y\p_x) \, \Lambda_t^b.
$$
Therefore, applying $\Lambda_t^b$ allows us to obtain the derivative estimates without destroying
the structure of the linearized equation \eqref{both}.  Furthermore, $\Lambda^b_t$ shares similarities
with the standard fractional Laplacian operators. For example, for any $b>0$,
$$
\|\Lambda_t^b(fg)\|_{L^2}\leq \|f\|_{L^\infty}\|\Lambda_t^bg\|_{L^2}+\|g\|_{L^\infty}\|\Lambda_t^bf\|_{L^2}
$$
and, for $b>1$,
$$
\|f(t)\|_{L^\infty(\Omega)}\leq C\|\widehat f(t)\|_{L^1(\Omega)}\leq C\|\Lambda_t^bf(t)\|_{L^2(\Omega)}.
$$
To precisely state the second linear stability result,  we define the horizontal fractional
derivative by
$$
\widehat{|D_x|^\gamma f}(k, \xi) = |k|^{\gamma} \widehat{f}(k, \xi).
$$
The linear stability result in Proposition \ref{linst} can be converted into an estimate
in the physical space.

\begin{prop} \label{linst2}
	Let $(\omega,\theta)$ be the solution to \eqref{both} with initial data $(\omega^{(0)},\theta^{(0)})$. Then there exists $C>0$ such that 
	for $b\in\R$,
	\begin{equation*}
	\begin{split}
	&\|\Lambda_t^b\omega\|_{L_t^\infty(L^2)}+\nu^\frac12\|D_y\Lambda_t^b\omega\|_{L_t^2(L^2)}
	+ \sigma \nu^\frac12 \|D_x \Lambda_t^b\omega\|_{L_t^2(L^2)} +\nu^\frac16\||D_x|^{\frac13}\Lambda_t^b\omega\|_{L_t^2(L^2)}\\
	&+(\nu\mu)^{-\frac16}\Big(\||D_x|^\frac13\Lambda_t^b\theta\|_{L_t^\infty(L^2)}
	+\mu^\frac12\|D_y|D_x|^\frac13\Lambda_t^b\theta\|_{L_t^2(L^2)} + \sigma  \mu^{\frac12}\||D_x|^\frac43\Lambda_t^b\theta\|_{L_t^2(L^2)}\\
	&\qquad\qquad\quad +\mu^\frac16\||D_x|^{\frac23}\Lambda_t^b\theta\|_{L_t^2(L^2)}\Big)\\
	&\leq C\big(\|\omega^{(0)}\|_{H^b}+(\nu\mu)^{-\frac16}\||D_x|^{\frac13}\theta^{(0)}\|_{H^b}\big).
	\end{split}
	\end{equation*}
\end{prop}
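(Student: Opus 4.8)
The plan is to reduce everything to a frequency-by-frequency computation on the Fourier side, and then read off the three groups of terms in the statement from, respectively, the pointwise bounds of Proposition \ref{linst}, an energy identity, and a single Cauchy--Schwarz splitting.

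First I would use the commutation $\Lambda_t^b(\p_t+y\p_x)=(\p_t+y\p_x)\Lambda_t^b$ together with the fact that $\Lambda_t^b$ is a Fourier multiplier commuting with $D_x$, $D_y$ and with the dissipation operators. Since $\Lambda_0^b=\langle D\rangle^b$, the pair $(\Lambda_t^b\omega,\Lambda_t^b\theta)$ solves the \emph{same} system \eqref{both} with data $(\langle D\rangle^b\omega^{(0)},\langle D\rangle^b\theta^{(0)})$, so it suffices to prove the estimate for $b=0$ after replacing the data norms $\|\cdot\|_{L^2}$ by $\|\cdot\|_{H^b}$. Passing to the Fourier variables and the sheared variable $\eta=\xi+kt$, Plancherel turns every norm in the statement into a sum over $k$ of weighted $L^2_\eta$ (and $L^2_t$) integrals of $H^\theta(k,\eta,t)=\widehat\theta(k,\xi,t)$ and $H^\omega$; in these variables $D_y$ has symbol $\eta-kt$ and the dissipation symbol is $A'(t):=\sigma k^2+(\eta-kt)^2$, so that $A'(t)$ is exactly the symbol of $\sigma D_x^2+D_y^2$.

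For the $L^\infty_t(L^2)$ terms and the enhanced-dissipation terms $\nu^{\frac16}\||D_x|^{\frac13}\omega\|$ and $\mu^{\frac16}\||D_x|^{\frac23}\theta\|$ I would simply insert the pointwise bounds of Proposition \ref{linst}: taking the supremum in $t$ gives the $L^\infty_t$ estimates at once, while the $L^2_t$ estimates follow because $\int_0^\infty e^{-2c\,\nu^{\frac13}|k|^{\frac23}t}\,dt=\tfrac{C}{\nu^{\frac13}|k|^{\frac23}}$ cancels exactly the prefactor $\nu^{\frac13}|k|^{\frac23}$ (and likewise with $\mu$). Crucially, the coupling coefficient $(\nu\mu)^{-\frac16}|k|^{\frac13}$ already appears inside the bound for $\|\omega_k\|$ in \eqref{S1.1prop1eq1}, so these four terms emerge with precisely the claimed constants. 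For the purely diffusive $\theta$-terms $\mu^{\frac12}\|D_y|D_x|^{\frac13}\theta\|$ and $\sigma\mu^{\frac12}\||D_x|^{\frac43}\theta\|$ I would use no decay at all, only the energy identity: multiplying $\p_tH^\theta=-\mu A'(t)H^\theta$ by $\bar H^\theta$ and integrating in $t$ and $\eta$ gives $\mu\int_0^\infty\!\!\int A'(t)|H^\theta|^2\le\tfrac12\|\theta_k^{(0)}\|_{L^2_\eta}^2$, and since $A'(t)$ is the symbol of $\sigma D_x^2+D_y^2$, multiplying by $|k|^{\frac23}$ and summing in $k$ yields both diffusive $\theta$-bounds.

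The main obstacle is the coupling in the two diffusive $\omega$-terms $\nu^{\frac12}\|D_y\omega\|$ and $\sigma\nu^{\frac12}\|D_x\omega\|$. Applying the same energy identity to $\p_tH^\omega=-\nu A'(t)H^\omega+ikH^\theta$ produces $\nu\int_0^\infty\!\!\int A'(t)|H^\omega|^2\le\tfrac12\|\omega_k^{(0)}\|_{L^2_\eta}^2$ plus an extra cross term $|k|\int_0^\infty\!\!\int|H^\theta||H^\omega|$. I would control this cross term by Cauchy--Schwarz in $\eta$, $t$ and $k$, splitting the single factor $|k|=|D_x|$ as $|D_x|^{\frac23}$ on $\theta$ and $|D_x|^{\frac13}$ on $\omega$, to obtain $\||D_x|^{\frac23}\theta\|_{L^2_t(L^2)}\,\||D_x|^{\frac13}\omega\|_{L^2_t(L^2)}$. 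These are exactly the two enhanced-dissipation quantities already bounded in the previous step, namely by $C\mu^{-\frac16}\||D_x|^{\frac13}\theta^{(0)}\|$ and by $C\nu^{-\frac16}\big(\|\omega^{(0)}\|+(\nu\mu)^{-\frac16}\||D_x|^{\frac13}\theta^{(0)}\|\big)$. Their product carries the factor $\mu^{-\frac16}\nu^{-\frac16}=(\nu\mu)^{-\frac16}$, which is precisely the coupling constant in the statement; this is the delicate point, since it requires the fractional powers $\tfrac23$ and $\tfrac13$ of $|D_x|$ distributed between $\theta$ and $\omega$ to line up exactly with the enhanced-dissipation gains $\mu^{-\frac16}$ and $\nu^{-\frac16}$. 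Feeding this back and absorbing via $(\nu\mu)^{-\frac16}\||D_x|^{\frac13}\theta^{(0)}\|\le\|\omega^{(0)}\|+(\nu\mu)^{-\frac16}\||D_x|^{\frac13}\theta^{(0)}\|$ closes the two diffusive $\omega$-estimates, and undoing the reduction of the first step restores the general $b\in\R$.
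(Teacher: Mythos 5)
Your proposal is correct and follows essentially the same route as the paper: commute $\Lambda_t^b$ (and the horizontal derivatives) through the linear system to reduce to data-level $H^b$ norms, then apply the per-frequency bounds of Proposition \ref{linst} together with the time-integrated energy estimates from its proof and sum over $k$, with the coupling weight $(\nu\mu)^{-\frac16}|k|^{\frac13}$ doing exactly the work you describe. The only cosmetic difference is your treatment of the buoyancy cross term for the diffusive $\omega$-norms $\nu^{\frac12}\|D_y\Lambda_t^b\omega\|_{L_t^2(L^2)}$ and $\sigma\nu^{\frac12}\|D_x\Lambda_t^b\omega\|_{L_t^2(L^2)}$ --- you split $|D_x|=|D_x|^{\frac23}\cdot|D_x|^{\frac13}$ by Cauchy--Schwarz after time integration, whereas the paper reuses the differential inequality from the proof of Proposition \ref{linst}, in which the same term was already absorbed via Young's inequality as $32\nu^{-\frac13}|k|^{\frac43}\|\theta_k\|_{L_y^2}^2$ and then integrated in time against the exponential decay of $\theta_k$ --- and both yield the identical bound.
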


\vskip .1in

We assume $\nu=\mu$ for simplicity from now on.
The main focus of this paper is actually the nonlinear stability.
We are able to establish
the stability and large-time behavior for both the full dissipation case and the case with
only vertical dissipation. Certainly the proof for the vertical dissipation case also works
for the full dissipation. Both results are presented here for a direct comparison. When the
dissipation is degenerate, more strict assumptions have to be made on the initial data.
The stability result for the fully dissipative Boussinesq equation is stated in the
following theorem.

\begin{thm}\label{Non1}
	Assume $b>1$, $\beta\geq\frac12$, $\delta\geq\beta+\frac13$, $\alpha\geq\delta-\beta+\frac23$ and that the initial data $(\omega^{(0)},\theta^{(0)})$ satisfies
	$$\|\omega^{(0)}\|_{H^b}\leq \varepsilon \nu^\beta,\qquad \|\theta^{(0)}\|_{H^b}\leq \varepsilon\nu^{\alpha},\qquad\||D_x|^\frac13\theta^{(0)}\|_{H^b}\leq \varepsilon\nu^\delta,$$
	for some sufficiently small $\varepsilon>0$.
	Then the solution $(\omega,\theta)$ to \eqref{S1eq.p} satisfies that
    \beno
	&&\|\Lambda_t^b\omega\|_{L_t^\infty(L^2)}+\nu^\frac12\|\nabla\Lambda_t^b\omega\|_{L_t^2(L^2)}
	+\nu^\frac16\| |D_x|^\frac13\Lambda_t^b\omega\|_{L_t^2(L^2)}\\
	&& \qquad\qquad\qquad \qquad \qquad\qquad +\|(-\Delta)^{-\frac12}\Lambda_t^b\omega_{\neq}\|_{L_t^2(L^2)}\leq C\varepsilon\nu^\beta,
    \\
	&&\|\Lambda_t^b\theta\|_{L_t^\infty(L^2)} +\nu^\frac12\|\nabla\Lambda_t^b \theta\|_{L_t^2(L^2)}
	+\nu^\frac16\||D_x|^\frac13\Lambda_t^b\theta\|_{L_t^2(L^2)}\\
	&& \qquad\qquad\qquad \qquad \qquad\qquad
	+\|(-\Delta)^{-\frac12}\Lambda_t^b\theta_{\neq}\|_{L_t^2(L^2)}
	\leq  C\varepsilon\nu^\alpha
	\eeno
	and
    \beno
	&& \||D_x|^\frac13\Lambda_t^b\theta\|_{L_t^\infty(L^2)}+ \nu^\frac12\|\nabla|D_x|^\frac13\Lambda_t^b \theta\|_{L_t^2(L^2)}+\nu^\frac16\||D_x|^\frac23\Lambda_t^b\theta\|_{L_t^2(L^2)}\\
	&&\qquad\quad \qquad\qquad\qquad \qquad\qquad  +\|(-\Delta)^{-\frac12}|D_x|^\frac13\Lambda_t^b\theta_{\neq}\|_{L_t^2(L^2)}
	\leq C\varepsilon\nu^\delta.
	 \eeno
\end{thm}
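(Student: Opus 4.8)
The plan is to prove Theorem \ref{Non1} by a continuity/bootstrap argument anchored on the linear estimates of Proposition \ref{linst2}. I would define the energy functional $E(t)$ to be exactly the left-hand side of the three asserted inequalities (truncated to time interval $[0,t]$), and set up the bootstrap hypothesis that $E(t)\le 2C\varepsilon\nu^{\beta}$ (respectively $2C\varepsilon\nu^{\alpha}$, $2C\varepsilon\nu^{\delta}$) on a maximal interval $[0,T^{*})$. The goal is to show that under this hypothesis the nonlinear contributions are strictly dominated by the linear bounds, so that $E(t)$ actually satisfies the stronger bound with constant $C$, forcing $T^{*}=\infty$ by a standard open/closed argument. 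Because the operator $\Lambda_t^b$ commutes with $\p_t+y\p_x$, applying $\Lambda_t^b$ to \eqref{S1eq.p} keeps the linear structure intact and produces forcing terms that are precisely $\Lambda_t^b$ of the nonlinear transport terms $(\widetilde{\bf u}\cdot\nabla)\om$ and $(\widetilde{\bf u}\cdot\nabla)\theta$, plus the coupling term $\p_x\theta$.

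First I would derive the differential energy inequalities for each quantity by testing the $\Lambda_t^b$-transformed equations against the appropriate weighted functions. For the vorticity equation this means multiplying by $\Lambda_t^b\om$ in $L^2$; the linear dissipative terms generate the good terms $\nu^{1/2}\|\nabla\Lambda_t^b\om\|_{L^2_t(L^2)}$ and the enhanced-dissipation term $\nu^{1/6}\||D_x|^{1/3}\Lambda_t^b\om\|_{L^2_t(L^2)}$, exactly as captured by Proposition \ref{linst2}. The term $\|(-\Delta)^{-1/2}\Lambda_t^b\om_{\neq}\|_{L^2_t(L^2)}$ arises from the inviscid mixing mechanism on nonzero modes and must be extracted carefully, since it controls the velocity through $\widetilde{\bf u}=-\nabla^\perp(-\Delta)^{-1}\om$. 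The coupling term $\p_x\theta$ is handled by the $(\nu\mu)^{-1/6}\||D_x|^{1/3}\cdots\|$ weighting built into the statement, which is why the theorem tracks both $\theta$ and $|D_x|^{1/3}\theta$ separately. The three estimates are genuinely coupled: the $\om$-bound feeds on the $|D_x|^{1/3}\theta$-bound through the source term, so I would close them simultaneously rather than sequentially.

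The heart of the argument is estimating the nonlinear transport terms, and this is where I expect the main obstacle. The typical trilinear term to bound is
\beno
\Big|\poscals{\Lambda_t^b\big((\widetilde{\bf u}\cdot\nabla)\om\big)}{\Lambda_t^b\om}\Big|,
\eeno
and one must distribute $\Lambda_t^b$ using the product estimate $\|\Lambda_t^b(fg)\|_{L^2}\le\|f\|_{L^\infty}\|\Lambda_t^bg\|_{L^2}+\|g\|_{L^\infty}\|\Lambda_t^bf\|_{L^2}$ stated earlier, then convert $L^\infty$ norms to $\Lambda_t^b$-based $L^2$ norms via $\|f\|_{L^\infty}\le C\|\Lambda_t^bf\|_{L^2}$ for $b>1$. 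The difficulty is that the velocity $\widetilde{\bf u}$ is one derivative smoother than $\om$ but recovered nonlocally through $(-\Delta)^{-1}$, and with only the degenerate dissipation available, every factor must be paired against a good term from the linear dissipation budget. The crucial device is the time-integrability gain encoded in the zero-mode/nonzero-mode splitting \eqref{ssp}: on nonzero modes one uses the enhanced-dissipation factor $\nu^{1/6}$ and the inviscid-damping factor $\|(-\Delta)^{-1/2}\Lambda_t^b\cdot_{\neq}\|_{L^2_t}$ to integrate in time, while the zero mode, which carries no enhanced dissipation, must be controlled separately using ordinary vertical dissipation and the structure of the nonlinearity (the zero mode of $(\widetilde{\bf u}\cdot\nabla)\om$ involves only $\widetilde v_{\neq}$ and vertical derivatives, so no self-interaction of zero modes survives).

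Finally, the powers of $\nu$ in the hypotheses $\beta\ge\frac12$, $\delta\ge\beta+\frac13$, $\alpha\ge\delta-\beta+\frac23$ are precisely what is needed to make the nonlinear remainders absorbable. After bounding each trilinear term by a product of three energy-type norms and applying Young's inequality to split off the good dissipation terms, one is left with a leftover factor that scales like $\varepsilon\nu^{\text{(some power)}}$ times the target bound; the scaling constraints guarantee this leftover factor is $o(1)$ in $\varepsilon$, closing the bootstrap. The interplay between the coupling (which costs $(\nu\mu)^{-1/6}=\nu^{-1/3}$ since $\nu=\mu$) and the smallness assumption $\||D_x|^{1/3}\theta^{(0)}\|_{H^b}\le\varepsilon\nu^\delta$ is exactly what dictates the relation $\delta\ge\beta+\frac13$, while $\alpha\ge\delta-\beta+\frac23$ reflects the extra $|D_x|^{1/3}$ and enhanced-dissipation powers needed to recover the plain $\theta$-bound from the $|D_x|^{1/3}\theta$-bound. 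Verifying that these exponent inequalities are sharp enough to absorb every nonlinear interaction is the bookkeeping crux of the proof.
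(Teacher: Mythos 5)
Your bootstrap skeleton matches the paper's framework, but the proposal has a genuine gap at its analytical core: you propose to test the $\Lambda_t^b$-transformed vorticity equation against $\Lambda_t^b\omega$ and assert that this ``generates'' the enhanced-dissipation term $\nu^{\frac16}\||D_x|^{\frac13}\Lambda_t^b\omega\|_{L_t^2(L^2)}$ and the damping term $\|(-\Delta)^{-\frac12}\Lambda_t^b\omega_{\neq}\|_{L_t^2(L^2)}$ ``exactly as captured by Proposition \ref{linst2}.'' It does not. In a plain $L^2$ pairing the transport term $y\p_x\Lambda_t^b\omega$ drops out entirely by skew-adjointness, leaving only $\nu\|\nabla\Lambda_t^b\omega\|_{L^2}^2$; and Proposition \ref{linst2} is a statement about solutions of the \emph{linear} system \eqref{both}, which cannot be imported into a nonlinear energy identity (nor does your plan set up a Duhamel argument that could use it). The paper's essential device, absent from your proposal, is to test against ${\mathcal M}\Lambda_t^b\omega$ with the multiplier ${\mathcal M}=1+{\mathcal M}_1+{\mathcal M}_2$ of \eqref{S2.1eq1}: the identity \eqref{idcom} converts $2{\rm Re}\langle y\p_x f,{\mathcal M}f\rangle_{L^2}$ into $\langle (k\p_\xi{\mathcal M})(D)f,f\rangle_{L^2}$, and the symbol bounds $k\p_\xi{\mathcal M}_1\geq \frac14\nu^{\frac13}|k|^{\frac23}$ (for $|\xi|\leq\nu^{-\frac13}|k|^{\frac13}$, with $\nu\xi^2$ covering the rest) and $k\p_\xi{\mathcal M}_2=(k^2+\xi^2)^{-1}$ are the \emph{only} source of the two extra good terms in the energy inequality. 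Without them (or an equivalent hypocoercivity/ghost-multiplier construction) the buoyancy term $\langle\p_x\Lambda_t^b\theta,\Lambda_t^b\omega\rangle_{L^2}$ cannot be split as $\||D_x|^{\frac23}\Lambda_t^b\theta\|_{L^2}\||D_x|^{\frac13}\Lambda_t^b\omega\|_{L^2}$ and absorbed, and your scheme cannot even begin the exponent bookkeeping you describe.

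A second concrete gap is the zero-mode transport term $u_0\p_x\omega_{\neq}$ (and its analogue $u_0\p_x\theta_{\neq}$). Your plan --- distribute $\Lambda_t^b$ by the product estimate and convert $L^\infty$ factors via the embedding for $b>1$ --- yields a bound of the shape $\|\Lambda_t^bu_0\|_{L^2}\|\p_x\Lambda_t^b\omega\|_{L^2}\|\Lambda_t^b\omega\|_{L^2}$, which loses a full horizontal derivative while the enhanced dissipation supplies only $\frac13$; and on the infinite time horizon nothing in the energy controls $\|\Lambda_t^b\omega\|_{L_t^2(L^2)}$, so even trading the full derivative against the dissipation $\nu\|\p_x\Lambda_t^b\omega\|_{L^2}^2$ at cost $\nu^{-\frac12}$ leaves a non-closable remainder. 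Your remark that the zero mode ``must be controlled separately using ordinary vertical dissipation'' misses the point: $\omega_0$ has no usable decay here, and it is not dissipation on $\omega_0$ that saves this term. The paper instead exploits the exact cancellation $\langle u_0\p_x({\mathcal M}_t^b\omega_{\neq}),{\mathcal M}_t^b\omega_{\neq}\rangle_{L^2}=0$ (with ${\mathcal M}_t^b=\sqrt{\mathcal M}\,\Lambda_t^b$), reducing the term to the commutator ${\mathcal M}_t^b(u_0\p_x\omega_{\neq})-u_0\p_x({\mathcal M}_t^b\omega_{\neq})$, which is then estimated by Taylor's formula on the symbol via $|\p_\xi{\mathcal M}_t^b(k,\xi)|\leq C\big(\nu^{\frac13}|k|^{-\frac13}+|k|^{-1}\big)\Lambda_t^b(k,\xi)$; the resulting bound $C\nu^{\frac13}\|\Lambda_t^b\omega_0\|_{L^2}\||D_x|^{\frac13}\Lambda_t^b\omega\|_{L^2}^2+C\|\Lambda_t^b\omega_0\|_{L^2}\|\Lambda_t^b\omega_{\neq}\|_{L^2}^2$ is absorbable precisely by the multiplier-generated good terms, and this is what produces the threshold $\beta\geq\frac12$. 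Your final paragraph's exponent analysis is consistent with the paper's, but it presupposes these two mechanisms, which your proposal neither states nor replaces.
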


\vskip .1in
In the case when there is only vertical dissipation, the stability and large-time behavior result is stated as follows.

\begin{thm}\label{Non2}
	Let $b>\frac43$, $\beta\geq\frac23$, $\delta\geq\beta+\frac13$, $\alpha\geq\delta-\beta+\frac23$. Assume that
	$$\|\omega^{(0)}\|_{H^b}\leq \varepsilon\nu^\beta, \qquad\|\theta^{(0)}\|_{H^b}\leq\varepsilon\nu^\alpha, \qquad\||D_x|^\frac13\theta^{(0)}\|_{H^b}\leq\varepsilon\nu^\delta$$
	for some sufficiently small $\varepsilon>0$. Then the solution to the system \eqref{vt} with initial data $(\omega^{(0)},\theta^{(0)})$ satisfies
	\beno
	&&\|\Lambda_t^b\omega\|_{L_t^\infty(L^2)}+\nu^\frac12\|D_y\Lambda_t^b\omega\|_{L_t^2(L^2)}
	+\nu^\frac16\| |D_x|^\frac13\Lambda_t^b\omega\|_{L_t^2(L^2)}\\
	&&\qquad\qquad\qquad\qquad\qquad\qquad +\|(-\Delta)^{-\frac12}\Lambda_t^b\omega_{\neq}\|_{L_t^2(L^2)}\leq C\varepsilon\nu^\beta, \\
	&&\|\Lambda_t^b\theta\|_{L_t^\infty(L^2)}+\nu^\frac12\|D_y\Lambda_t^b \theta\|_{L_t^2(L^2)}
	+\nu^\frac16\||D_x|^\frac13\Lambda_t^b\theta\|_{L_t^2(L^2)} \\
	&&\qquad\qquad\qquad\qquad\qquad\qquad
	+\|(-\Delta)^{-\frac12}\Lambda_t^b\theta_{\neq}\|_{L_t^2(L^2)}\leq C\varepsilon\nu^\alpha
	\eeno
	and
	\beno
	&& \||D_x|^\frac13\Lambda_t^b\theta\|_{L_t^\infty(L^2)}+\nu^\frac12\|D_y|D_x|^\frac13\Lambda_t^b \theta\|_{L_t^2(L^2)}+\nu^\frac16\||D_x|^\frac23\Lambda_t^b\theta\|_{L_t^2(L^2)}\\
	&&\qquad\qquad\qquad \qquad \qquad\qquad  +\|(-\Delta)^{-\frac12}|D_x|^\frac13\Lambda_t^b\theta_{\neq}\|_{L_t^2(L^2)}\leq C\varepsilon\nu^\delta.
	\eeno
\end{thm}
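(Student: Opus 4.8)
The plan is to prove Theorem \ref{Non2} by a bootstrap (continuation) argument built on the weighted energy functional suggested by the left-hand sides of the three stated inequalities. Writing $\nu=\mu$, set $\Omega:=\Lambda_t^b\omega$ and $\Theta:=\Lambda_t^b\theta$, and let $\mathcal E(T)$ denote the sum of the three groups of space-time norms over $[0,T]$ appearing in the conclusion, with target sizes $\varepsilon\nu^\beta$, $\varepsilon\nu^\alpha$ and $\varepsilon\nu^\delta$ respectively. Local well-posedness of \eqref{vt} in $H^b$ furnishes a maximal interval on which a smooth solution exists and on which I may assume the bootstrap hypothesis that $\mathcal E(T)$ is at most twice the target. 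The goal is to show that, for $\varepsilon$ small and under the exponent constraints $b>\tfrac43$, $\beta\ge\tfrac23$, $\delta\ge\beta+\tfrac13$, $\alpha\ge\delta-\beta+\tfrac23$, the functional $\mathcal E(T)$ is in fact bounded by the target itself; this closes the continuation and yields the global bound.

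\textbf{The linear backbone.} Since $\Lambda_t^b$ commutes with $\p_t+y\p_x$ and with the Fourier multipliers $\p_x,\p_{yy}$, applying $\Lambda_t^b$ to \eqref{vt} gives
\[(\p_t+y\p_x)\Omega=\nu\p_{yy}\Omega+\p_x\Theta-\Lambda_t^b\big((\widetilde{\bf u}\cdot\nabla)\omega\big),\]
and a parallel equation for $\Theta$. Because $\Re\langle y\p_x\Omega,\Omega\rangle=0$, the $L^2$ energy identity produces the vertical dissipation $-\nu\|D_y\Omega\|_{L^2}^2$; the enhanced dissipation $\nu^{\frac16}\||D_x|^{\frac13}\Omega\|_{L_t^2(L^2)}$ and the inviscid-damping term $\|(-\Delta)^{-\frac12}\Omega_{\neq}\|_{L_t^2(L^2)}$ arise exactly as in the proof of Proposition \ref{linst2}, from the interaction of $y\p_x$ with $\nu\p_{yy}$ encoded in the time-shifted weight \eqref{elli}; concretely I would run the mode-by-mode decay of Proposition \ref{linst} against the Duhamel integral of the nonlinear forcing, or, equivalently, reprove that hypocoercive estimate with the forcing retained. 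The buoyancy coupling $\Re\langle\p_x\Theta,\Omega\rangle$ is the reason the three estimates must be closed simultaneously: it is absorbed by pairing the $|D_x|^{\frac13}$-weighted temperature estimate (the third inequality) against the enhanced dissipation of $\omega$, and this balancing is precisely what fixes the admissible exponents $\delta\ge\beta+\tfrac13$ and $\alpha\ge\delta-\beta+\tfrac23$, mirroring the amplification factor $(\nu\mu)^{-\frac16}|k|^{\frac13}$ in Proposition \ref{linst}.

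\textbf{The nonlinear terms.} By incompressibility $\langle(\widetilde{\bf u}\cdot\nabla)\Omega,\Omega\rangle=0$, so the only contribution to the vorticity energy is the commutator $\langle[\Lambda_t^b,\widetilde{\bf u}\cdot\nabla]\omega,\Omega\rangle$, which I would estimate using the product and commutator inequalities recorded after \eqref{elli} together with the Biot--Savart law $\widetilde{\bf u}=-\nabla^\perp(-\Delta)^{-1}\omega$. The decisive structural point is to split every factor into its zero mode and nonzero modes via \eqref{ssp}: on nonzero modes the velocity is controlled by $\|(-\Delta)^{-\frac12}\Lambda_t^b\omega_{\neq}\|$ (inviscid damping) and the enhanced dissipation, while the zero mode of the horizontal velocity --- which enjoys no enhanced dissipation whatsoever --- must be treated separately using only vertical dissipation and the smallness of the data. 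Bounding $\|\widetilde{\bf u}\|_{L^\infty}$ and its relevant derivatives through $\Lambda_t^b$, and then feeding in the bootstrap hypothesis, renders the nonlinear contribution quadratic, of size $O(\varepsilon\cdot\varepsilon\nu^{\beta})$, which for small $\varepsilon$ is absorbed into the target; the temperature transport is handled identically.

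\textbf{The main obstacle.} The hard part is the nonlinear vorticity transport under \emph{degenerate} dissipation. When horizontal dissipation is present (Theorem \ref{Non1}) the commutator's loss of a horizontal derivative is absorbed by the term $\nu^{\frac12}\|D_x\Lambda_t^b\omega\|_{L_t^2(L^2)}$ available there; with only vertical dissipation this term is absent and the sole horizontal smoothing is the weak gain $\nu^{\frac16}\||D_x|^{\frac13}\cdot\|_{L_t^2(L^2)}$, which recovers merely one third of a derivative. Compensating the remaining deficit forces one to exploit the $(\p_y+t\p_x)$ structure of $\Lambda_t$ and the inviscid damping of the velocity, and it is exactly this loss that raises the regularity threshold from $b>1$ to $b>\tfrac43$ and the size threshold to $\beta\ge\tfrac23$. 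The most delicate single estimate is the interaction of the non-dissipating zero-mode shear with the nonzero modes of $\omega$; controlling it without horizontal dissipation, by trading the extra regularity in $b$ against the enhanced dissipation budget, is where the bulk of the technical work lies and is the step I expect to dominate the proof.
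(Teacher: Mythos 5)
Your skeleton --- bootstrap on the three-tier energy, commuting $\Lambda_t^b$ through the transport, a hypocoercive multiplier estimate, and the zero/nonzero-mode split with commutators --- is indeed the paper's framework, but two of your load-bearing steps have genuine gaps. First, the inviscid-damping norm $\|(-\Delta)^{-\frac12}\Lambda_t^b\omega_{\neq}\|_{L_t^2(L^2)}$, on which your whole treatment of the nonzero-mode velocity rests, does not ``arise exactly as in the proof of Proposition \ref{linst2}'': it appears nowhere in the linear estimates, and running the mode-by-mode decay of Proposition \ref{linst} against a Duhamel integral cannot produce it either. In the paper it is manufactured inside the energy identity by a second component of the multiplier, $\mathcal M_2(k,\xi)=k^{-2}\big(\arctan(\xi/k)+\frac{\pi}{2}\big)$, chosen precisely so that $k\p_\xi\mathcal M_2=(k^2+\xi^2)^{-1}$; together with $\mathcal M_1$, which delivers the $\frac14\nu^{\frac13}|k|^{\frac23}$ enhanced dissipation, this is what puts both terms on the left-hand side of the energy inequality. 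The weight \eqref{elli} contributes nothing to either effect --- it only commutes with $\p_t+y\p_x$. Without constructing $\mathcal M$ (or something equivalent), your energy inequality has no damping term, and the nonlinear scheme you sketch cannot close.

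Second, your account of the nonlinear terms both understates and mislocates the difficulty. The claim that bounding $\|\widetilde{\bf u}\|_{L^\infty}$ and feeding in the bootstrap ``renders the nonlinear contribution quadratic'' fails outright for $u_0\p_x\theta_{\neq}$ and $u_{\neq}\p_x\theta_{\neq}$, which carry a full horizontal derivative against no horizontal dissipation; and once the multiplier sits in the energy, the relevant commutator is $\big[\sqrt{\mathcal M}\,\Lambda_t^b,\,{\bf u}\cdot\nabla\big]$, not $\big[\Lambda_t^b,{\bf u}\cdot\nabla\big]$. The zero-mode interaction you single out as the dominant obstacle is in fact dispatched in a few lines by Taylor expanding $\mathcal M_t^b=\sqrt{\mathcal M}\,\Lambda_t^b$ in $\xi$ alone, using $|\p_\xi\mathcal M_t^b(k,\xi)|\leq C\big(\nu^{\frac13}|k|^{-\frac13}+|k|^{-1}\big)\Lambda_t^b(k,\xi)$, so that the full factor $k$ degrades to $\nu^{\frac13}|k|^{\frac23}+1$, absorbable by the enhanced dissipation. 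The bulk of the work, and the actual source of the threshold $b>\frac43$, is the nonzero-nonzero interaction $u_{\neq}\p_x\theta_{\neq}$: because $\mathcal M_t^b$ is not smooth across $k=0$, the paper must split frequency space into the four sign regions $\{k\gtrless0,\ k-l\gtrless0\}$, Taylor expand in both $k$ and $\xi$ on the like-sign regions, and distribute thirds of horizontal derivatives via inequalities such as $(k-l)^{\frac23}\leq(k-l)^{\frac13}k^{\frac13}$; the condition $b>\frac43$ enters there, through $\|\widehat{|D_x|^{\frac23}\theta_{\neq}}\|_{L^1}\leq\||D_x|^{\frac13}\Lambda_t^b\theta_{\neq}\|_{L^2}$ --- not through the zero-mode term, which needs far less regularity. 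Your proposal names none of this machinery, so as it stands it is a correct outline of the strategy with the decisive steps missing.
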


\vskip .1in
Special consequences of Theorem \ref{Non1} and Theorem \ref{Non2} are the nonlinear stability for
the 2D Navier-Stokes equation with full dissipation or with only vertical dissipation.
When $\theta\equiv0$, the system \eqref{S1eq.p} reduces to the 2D Navier-Stokes vorticity equation with full dissipation.
The stability problem of the 2D Couette flow or more general shear flows near the Couette flow has previously been investigated on the 2D Navier-Stokes equations with full dissipation, we refer to the references \cite{Bed4, Mas0, Mas1}. In particular, we recover the threshold index estimate $\beta\geq\frac12$ with data in $H^b$, $b>1$ established firstly in \cite{Bed4}.
On the other hand, since the stability result for the 2D Navier-Stokes equation with only vertical dissipation is
completely new, we state it as a corollary. When $\theta \equiv 0$,  the system \eqref{vt} reduces to the 2D Navier-Stokes vorticity equation with only vertical dissipation,
\begin{equation}\label{vt1}
\begin{cases}
\p_t \omega+y\p_x \omega+({\bf u}\cdot\nabla)  \omega =\nu \p_{yy} \omega,\\
{\bf u}=-\nabla^\perp(-\Delta )^{-1}\omega.
\end{cases}
\end{equation}
Theorem \ref{Non2} yields the following stability result for (\ref{vt1}).
\begin{coro} \label{Nas}
	Let $b>\frac43$ and $\beta>\frac23$. Assume the initial vorticity $\omega^{(0)}$ satisfies
	$$\|\omega^{(0)}\|_{H^b}\leq \varepsilon \nu^\beta$$
	for some suitable small number $\varepsilon>0$. Then the corresponding solution $\om$ to
	(\ref{vt1}) satisfies
	 \beno
	&&\|\Lambda_t^b\omega\|_{L_t^\infty(L^2)}+\nu^\frac12\|D_y\Lambda_t^b\omega\|_{L_t^2(L^2)}
	+\nu^\frac16\| |D_x|^\frac13\Lambda_t^b\omega\|_{L_t^2(L^2)}\\
	&& \qquad\qquad\qquad \qquad \qquad\qquad +\|(-\Delta)^{-\frac12}\Lambda_t^b\omega_{\neq}\|_{L_t^2(L^2)}\leq C\varepsilon\nu^\beta.
	\eeno
\end{coro}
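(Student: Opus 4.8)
The plan is to derive Corollary \ref{Nas} directly from Theorem \ref{Non2} by specializing to the case $\theta\equiv0$. First I would note that if $\omega$ denotes the solution of the Navier--Stokes vorticity equation \eqref{vt1} with data $\omega^{(0)}$, then the pair $(\omega,0)$ solves the full Boussinesq system \eqref{vt} with data $(\omega^{(0)},0)$: substituting $\theta\equiv0$ annihilates the buoyancy term $\p_x\theta$ in the vorticity equation, reducing it to \eqref{vt1}, while the temperature equation collapses to the trivial identity $0=0$. By the uniqueness of solutions to \eqref{vt} in the regularity class under consideration, the solution of \eqref{vt} issued from $(\omega^{(0)},0)$ therefore has $\theta\equiv0$, and its vorticity component is precisely the solution of \eqref{vt1}.

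Next I would check that the hypotheses of Theorem \ref{Non2} are satisfied. From the statement of the corollary we are given $b>\frac43$ and $\beta>\frac23$, hence in particular $\beta\geq\frac23$; it then remains to supply exponents $\delta$ and $\alpha$ with $\delta\geq\beta+\frac13$ and $\alpha\geq\delta-\beta+\frac23$, and I would simply take $\delta=\beta+\frac13$ and $\alpha=1$, which is admissible since $\delta-\beta+\frac23=1$. The two temperature hypotheses $\|\theta^{(0)}\|_{H^b}\leq\varepsilon\nu^\alpha$ and $\||D_x|^\frac13\theta^{(0)}\|_{H^b}\leq\varepsilon\nu^\delta$ then hold trivially, both left-hand sides being zero, while the vorticity hypothesis $\|\omega^{(0)}\|_{H^b}\leq\varepsilon\nu^\beta$ is exactly the assumption of the corollary.

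With all hypotheses verified, Theorem \ref{Non2} applies and delivers its full conclusion for $(\omega,\theta)$. Since $\theta\equiv0$, all of the temperature estimates are vacuous, and the first displayed inequality of that theorem, namely the bound on $\|\Lambda_t^b\omega\|_{L_t^\infty(L^2)}+\nu^\frac12\|D_y\Lambda_t^b\omega\|_{L_t^2(L^2)}+\nu^\frac16\||D_x|^\frac13\Lambda_t^b\omega\|_{L_t^2(L^2)}+\|(-\Delta)^{-\frac12}\Lambda_t^b\omega_{\neq}\|_{L_t^2(L^2)}$ by $C\varepsilon\nu^\beta$, is precisely the asserted estimate. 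I would emphasize that the only nontrivial point in this reduction is the propagation of $\theta\equiv0$ via uniqueness; the substantive analytic content lies entirely inside Theorem \ref{Non2}, where one must close a bootstrap argument for the transport nonlinearity $({\bf u}\cdot\nabla)\omega$ using only the degenerate vertical dissipation $\nu\p_{yy}$, together with the enhanced-dissipation and inviscid-damping gains encoded by the commuting operator $\Lambda_t^b$ and the factor $(-\Delta)^{-\frac12}$ acting on $\omega_{\neq}$. That bootstrap, and not anything in the present corollary, is where the real difficulty resides.
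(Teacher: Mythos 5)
Your proposal is correct and matches the paper's own route: the paper presents Corollary \ref{Nas} as an immediate specialization of Theorem \ref{Non2} to $\theta\equiv 0$, exactly as you do, and your explicit choice $\delta=\beta+\frac13$, $\alpha=1$ verifies the exponent conditions $\delta\geq\beta+\frac13$, $\alpha\geq\delta-\beta+\frac23$ since the temperature hypotheses are vacuous for zero data. Your added observation that $(\omega,0)$ is itself a solution of \eqref{vt} already suffices to apply the theorem's a priori estimates directly, so the appeal to uniqueness is harmless but not strictly needed.
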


\begin{rem}
When we consider shear flows ${\bf u}_{sh}=(u(y),0)$ different from the Couette flow, the corresponding perturbation systems contain nonlocal terms which will bring extra technical difficulties.
Stability problem of more general shear flows close to the Couette flow for the Boussinesq system will be investigated in a forthcoming paper.
\end{rem}

 \subsection{Sketch of the proof}

\vskip .1in
The proofs of the nonlinear stability results stated in Theorem \ref{Non1} and Theorem \ref{Non2}
are not trivial. As aforementioned, due to the presence of the buoyancy force, it is not plausible
to establish the desired stability results without taking full advantage of the enhanced
dissipation, created by the combination of $y\p_x \om$ with $\p_{yy} \om$ in the vorticity equation and of $y\p_x \theta$ with $\p_{yy} \theta$ in the temperature equation.

\vskip .1in
Let us explain how to extract the enhanced dissipation, especially the regularity in the horizontal direction, generated by the non-self-adjoint operator $y\p_x-\nu\p_{yy}$. We design a Fourier multiplier operator $\mathcal M$ defined as follows.
Choose a real-valued, non-decreasing function $\varphi\in C^\infty(\R)$ satisfying $0\leq\varphi\leq1$ and $\varphi'=\frac14$ on $[-1,1]$. Define the multiplier ${\mathcal M}={\mathcal M}(D_x,D_y)$ as ${\mathcal M}={\mathcal M}_1+{\mathcal M}_2+1$ with ${\mathcal M}_1$ and ${\mathcal M}_2$ given by
\begin{equation*}\label{multiplier}
\begin{split}
{\mathcal M}_1(k,\xi)&=\varphi\big(\nu^\frac13|k|^{-\frac13}{\rm sgn}(k)\xi\big),\  k\neq0,\\
{\mathcal M}_2(k,\xi)&=\frac{1}{ k^2}\Big(\arctan\frac{\xi}{k}+\frac{\pi}{2}\Big),\ k\neq0, \\
{\mathcal M}_1(0,\xi)&={\mathcal M}_2(0,\xi)=0.
\end{split}
\end{equation*}
Then ${\mathcal M}$ is a self-adjoint Fourier multiplier acting on $L^2(\Omega)$ and verifies that
$$1\leq{\mathcal M}\leq2+\pi.$$
Let us remark the fact that for a self-adjoint operator $A=A^\ast$ and a skew-adjoint operator $B=-B^\ast$ on $L^2$, we have the following identity
\begin{equation}\label{idcom}
\begin{split}
2{\rm Re}\langle Af,Bf\rangle_{L^2}&=\langle Af,Bf\rangle_{L^2}+\langle Bf,Af\rangle_{L^2}\\
&=\langle B^\ast Af,f\rangle_{L^2}+\langle A^\ast Bf,f\rangle_{L^2}\\
&=\langle(AB-BA)f,f\rangle_{L^2}=\langle[A,B]f,f\rangle_{L^2},
\end{split}
\end{equation}
where $[A,B]:=AB-BA$ denotes the commutator between $A$ and $B$.

%
Now taking the inner product of $(y\p_x-\nu\p_{yy})\omega$ with ${\mathcal M}\omega$ leads to the quantity
$$
R:= 2{\rm Re}\langle y\p_x \om, \mathcal M \om\rangle_{L^2} - 2\nu {\rm Re}\langle \p_{yy} \om, \mathcal M \om\rangle_{L^2},
$$
for which we intend to prove a lower bound.
Using the fact that $\mathcal M$ is self-adjoint and $y\p_x$ is skew-adjoint, we have
\ben
2{\rm Re}\langle y\p_x  \om, \mathcal M \om\rangle_{L^2}
&=&  \langle[\mathcal M, y \p_x]   \om,  \om\rangle_{L^2} \notag \\ 
&=& \sum_{k} \int_{\mathbb R} ( k \p_\xi \mathcal M) \,|\widehat{ \om}(k, \xi)|^2
\, d\xi, \notag
\een
where we have used Plancherel's theorem in the last step. Consequently,
$$
R =  \sum_{k} \int_{\mathbb R} \left(k \p_\xi \mathcal M
+ 2 \nu\,\mathcal M \xi^2\right) \,|\widehat{\om}(k, \xi)|^2\, d\xi.
$$
The multiplier ${\mathcal M}_1$ is constructed in order to capture the regularity in the horizontal direction:
according to the definition of $\mathcal M_1$, for any $k\neq0$ and $\xi\in\R$,
$$
k\p_\xi{\mathcal M}_1(k,\xi)=\nu^{\frac13}|k|^\frac23\varphi'\big(\nu^\frac13|k|^{-\frac13}{\rm sgn}(k)\xi\big),
$$
which is bounded from below by $\frac14\nu^\frac13|k|^\frac23$ when $|\xi|\leq\nu^{-\frac13}|k|^\frac13$, thanks to the special choice of the function $\varphi$. One finds the following important inequality
\begin{equation*}\label{}
 \nu \xi^2  +  k \p_\xi \mathcal M_1  \ge  \frac14 \nu^{\frac13} |k|^{\frac23} ,\quad \forall \xi\in\R,\  k\in\Z.
\end{equation*}
The multiplier ${\mathcal M}_2$ is designed to control the velocity in the nonlinear term since we have
$$k\p_\xi{\mathcal M}_2(k,\xi)=\frac1{k^2+\xi^2}.$$
Combining the above estimates, one achieves the lower bound
\beq \label{enhan}
R \ge  \nu \|\p_y  \om\|_{L^2}^2 + \frac14 \nu^{\frac13} \||D_x|^{\frac13} \om\|_{L^2}^2 + \|(-\Delta)^{\frac12}  \om_{\not =}\|_{L^2}^2.
\eeq
\eqref{enhan} leads to a control of $\frac13$-horizontal
derivative of $\om$ and this is the main reason why we can possibly control the buoyancy force, as well as the nonlinear terms.
Let us also remark that the exponent $1/3$ on the right hand side of \eqref{enhan} is sharp in the sense that
 there exist $c>0$ and functions $\omega_\nu\in L^2$ such that the equality $\|(y\p_x-\nu\p_{yy})\omega_{\nu}\|_{L^2}\|\omega_{\nu}\|_{L^2}=c\nu^\frac13\||D_x|^\frac13\omega_\nu\|_{L^2}^2$ holds for all $0<\nu<1$.
 This is due to the special first-order bracket structure of the operator $y\p_x-\nu\p_{yy}$, see \cite{Ho} for more details.

 Standard Sobolev type energy estimates would not work since they would destroy the combination, see Proposition \ref{linst}.
 We shall apply the operator $\Lambda_t^b$ defined in \eqref{elli} which allows differentiate the equations in \eqref{S1eq.p} and \eqref{vt} without changing the linear structures of the system, and then apply the multiplier ${\mathcal M}$ to obtain the desired enhanced dissipations for higher-order derivatives.

\vskip .1in
The buoyancy term in the equation of the vorticity $\om$ takes the form $\p_x \theta$, which
contains full one horizontal derivative. In the process of estimating $\|\Lambda^b_t \om\|_{L^2}$, the buoyancy term can be bounded by
$$
|\langle\p_x\Lambda_t^b\theta,{\mathcal M}\Lambda_t^b\omega\rangle_{L^2}|
\leq \||D_x|^\frac23\Lambda_t^b\theta\|_{L^2}\||D_x|^\frac13\Lambda_t^b\omega\|_{L^2},
$$
which contains $\frac23$-horizontal derivative on $\theta$.
Since the enhanced dissipation in the estimate of $\|\Lambda_t^b\theta\|_{L^2}$ contains only
$\frac13$-horizontal derivative dissipation, we need to estimate $\||D_x|^\frac13\Lambda_t^b\theta\|_{L^2}$ in order to control the buoyancy term.
This explains why we combine the estimates of $\| \Lambda^b_t \om\|_{L^2}$,  $\| \Lambda^b_t \theta\|_{L^2}$ and $\| |D_x|^\frac13\Lambda^b_t \om\|_{L^2}$.

\vskip .1in
Most of the efforts are devoted to
obtaining suitable upper bounds on the nonlinear terms. This is a very delicate process
especially when there is only vertical dissipation. Let us explain some of the difficulties and our approach in dealing with them when we estimate the nonlinear term ${\bf u}\cdot\nabla\theta$. The velocity $\bf u$ is represented in terms of $\om$ via the Biot-Savart law
$${\bf u}=-\nabla^\perp(-\Delta)^{-1}\om.$$
To distinguish between the different behaviors of the zeroth mode and the nonzero modes, we
split the velocity into two parts according to \eqref{ssp}
$$
{\bf u} = {\bf u}_0 + {\bf u}_{\neq} = \begin{pmatrix}u_{0}\\0\end{pmatrix} + \begin{pmatrix}u_{\neq}\\ v_{\neq} \end{pmatrix} = \begin{pmatrix}u_{0}\\0\end{pmatrix} +  \begin{pmatrix}\p_y(-\Delta)^{-1}\omega_{\neq}\\ -\p_x(-\Delta)^{-1}\omega_{\neq} \end{pmatrix},
$$
where $u_0=\p_y(-\p_y^2)^{-1}\omega_0$. Accordingly,  ${\bf u}\cdot\nabla\theta$ is decomposed into three parts,
$$
{\bf u}\cdot\nabla\theta = u_0\p_x\theta
+\p_y(-\Delta)^{-1}\omega_{\neq}\p_x\theta
-\p_x(-\Delta)^{-1}\omega_{\neq}\p_y\theta.
$$
Due to the lack of dissipation in the horizontal direction, it is impossible to obtain suitable bounds for the first two terms in ${\bf u}\cdot\nabla\theta$ directly.
Our strategy to overcome this difficulty is to estimate the scalar product
$$H:=\langle\Lambda_t^b( {\bf u}\cdot\nabla\theta),{\mathcal M}\Lambda_t^b\theta\rangle_{L^2}.$$
With the help of the multiplier ${\mathcal M}$, the frequency space is divided into different subdomains to facilitate cancellations and derivative distributions. Commutator estimates are employed to shift derivatives so that we are  able to control the nonlinear terms. Detailed estimates are very technical and left to the proof of Theorem \ref{Non2} in Section \ref{nonpr2}.

\vskip .1in
The rest of this paper is divided into three sections. Section \ref{linpr} proves the linear stability
stated in Propositions \ref{linst} and \ref{linst2}. Theorem
\ref{Non1} is proved in Section \ref{nonpr1} while Section \ref{nonpr2} presents the proof of Theorem \ref{Non2}.

\vskip .3in
\section{Proofs of Propositions \ref{linst} and \ref{linst2}}
\label{linpr}

This section is devoted to the proofs of the linear stability results stated in
Propositions \ref{linst} and \ref{linst2}. These results are valid for both the full dissipation case and the case with only vertical dissipation. To prove the desired stability results,
we construct special Fourier multiplier operators to extract the enhanced dissipation
from the non-self-adjoint operators $y\p_x-\nu\p_{yy}$ and $y\p_x-\mu\p_{yy}$.

\vskip .1in
We are ready to prove Proposition \ref{linst}.

\begin{proof}[Proof of Proposition \ref{linst}] By projecting the equations in (\ref{both}) onto each frequency, we obtain the system in the $y$-variable only,
	\begin{equation}\label{S1.1eq2}
	\begin{cases}
	\p_t\omega_k+\nu (D_y^2+ \sigma k^2) \omega_k+iky \omega_k=ik \theta_k ,\\
	\p_t\theta_k+\mu(D_y^2+  \sigma k^2)\theta_k+iky \theta_k=0,\\
	\omega_k|_{t=0}=\omega^{(0)}_k,\quad \theta_k|_{t=0}=\theta^{(0)}_k,
	\end{cases}
	\end{equation}
where we have used the notation $D=\frac1i\p$. We note that $\sigma=1$ corresponds to the full dissipation case while $\sigma=0$ to the case with only vertical dissipation. Since $\om_k$ and $\theta_k$ may be complex-valued, the the $L_y^2$-inner product is given by
$$
\langle f, g\rangle_{L_y^2} = \int_{\mathbb R} f(y)\,\bar{g}(y)\,dy.
$$
By taking the $L_y^2$-inner product of $\theta_k$ with the second equation in \eqref{S1.1eq2},
we have
\begin{equation}\label{S1.1eq3}
\frac12\frac{d}{dt}\|\theta_k\|_{L_y^2}^2+\mu\|D_y\theta_k\|_{L_y^2}^2+\sigma \mu k^2\|\theta_k\|_{L_y^2}^2=0.
\end{equation}
To further the estimates, we define and apply Fourier multiplier operators. If $k>0$, we define a multiplier $M_k$ by
$$
M_k\theta_k:=\varphi(\mu^\frac13|k|^{-\frac13}D_y)\theta_k,
$$
where $\varphi$ is a real-valued, non-decreasing function,  $\varphi\in C^\infty(\R)$ satisfying $0\leq\varphi\leq1$ and $\varphi'=\frac14$ on $[-1,1]$. Clearly, $M_k$ is a self-adjoint and non-negative Fourier multiplier operator.
We take the $L_y^2$-inner product of the second equation in \eqref{S1.1eq2} with $M_k\theta_k$.  The following basic identities hold,
\beno
&& 2{\rm Re}\langle \p_t\theta_k,M_k\theta_k\rangle_{L_y^2}=\frac{d}{dt}\langle M_k\theta_k,\theta_k\rangle_{L_y^2},\\
&& 2{\rm Re}\langle \mu(D_y^2+ \sigma k^2)\theta_k,M_k\theta_k\rangle_{L_y^2}=\langle  2\mu(D_y^2+ \sigma k^2)M_k\theta_k,\theta_k\rangle_{L_y^2},\\
&& 2{\rm Re}\langle iky\theta_k,M_k\theta_k\rangle_{L_y^2}=\langle\big[M_k,iky\big]\theta_k ,\theta_k\rangle_{L_y^2},
\eeno
where in the last equation we have used the fact that $M_k$ is self-adjoint and $iky$ is skew-adjoint. Here the bracket in $\big[M_k,iky\big]$ denotes the standard commutator.
Noticing that
$$[M_k,iky]=\big[\varphi(\mu^\frac13|k|^{-\frac13}D_y),iky\big]=\mu^\frac13|k|^\frac23\varphi'(\mu^\frac13|k|^{-\frac13}D_y),$$
we obtain
\beno
&& \frac{d}{dt}\langle M_k\theta_k,\theta_k\rangle_{L_y^2}
+\langle 2\mu(D_y^2+ \sigma k^2)\varphi(\mu^\frac13|k|^{-\frac13}D_y) \theta_k, \theta_k \rangle_{L_y^2} \\
&&\qquad\qquad +\langle\mu^\frac13|k|^\frac23\varphi'(\mu^\frac13|k|^{-\frac13}D_y)\theta_k,
\theta_k\rangle_{L_y^2}=0.
\eeno
Together with \eqref{S1.1eq3}, this gives
\beno
&&\frac{d}{dt}\Big(\|\theta_k\|_{L_y^2}^2+\langle M_k\theta_k,\theta_k\rangle_{L_y^2}\Big)\\
&& + \Big\langle  \Big(2\mu(D_y^2+ \sigma k^2)\big(1+\varphi(\mu^\frac13|k|^{-\frac13}D_y)\big)+\mu^\frac13|k|^\frac23\varphi'(\mu^\frac13|k|^{-\frac13}D_y)\Big)\theta_k,\theta_k\Big\rangle_{L_y^2}=0.
\eeno
By the choice of the function $\varphi$, there holds
$$\mu(\xi^2+ \sigma k^2)\big(1+2\varphi(\mu^\frac13|k|^{-\frac13}\xi)\big)+\mu^\frac13|k|^\frac23\varphi'(\mu^\frac13|k|^{-\frac13}\xi)\geq\frac14\mu^\frac13|k|^\frac23$$
for all $k>0$, $\mu>0$, $\xi\in\R$.
Therefore,
\begin{equation*}
\begin{split}
&\frac{d}{dt}\big(\langle (1+M_k)\theta_k,\theta_k\rangle_{L_y^2}\big)+\mu\|D_y\theta_k\|_{L_y^2}^2+ \sigma \mu k^2\|\theta_k\|_{L_y^2}^2+\frac14\mu^\frac13|k|^\frac23\|\theta_k\|_{L_y^2}^2\leq0.
\end{split}
\end{equation*}
Integrating in $t$ and using properties of $M_k$, we obtain the first inequality in \eqref{S1.1prop1eq1} for $k>0$.
In the case when $k<0$, we define the multiplier $M_k$  by
$$M_k\theta_k:=\varphi(-\mu^\frac13|k|^{-\frac13}D_y)\theta_k,$$
and define $M_0=0$, we can deduce the first inequality in \eqref{S1.1prop1eq1} for $k\le 0$.

\vskip .1in
We prove the first inequality in \eqref{S1.1prop1eq2} by induction.
Differentiating the second equation in \eqref{S1.1eq2} with respect to $y$ leads to
$$
\p_t D_y^N\theta_k+\mu(D_y^2+ \sigma k^2)D_y^N\theta_k+ikyD_y^N\theta_k+kND_y^{N-1}\theta_k=0.
$$
Taking the $L^2_y$-inner product with $(1+M_k)D_y^N\theta_k$ then gives
\begin{equation*}
\begin{split}
&\frac{d}{dt}\langle (1+M_k)D_y^N\theta_k,D_y^N\theta_k\rangle_{L_y^2} +\mu\|D_y^{N+1}\theta_k\|_{L_y^2}^2+ \sigma \mu k^2\|D_y^{N}\theta_k\|_{L_y^2}^2\\
&\qquad+\frac14\mu^\frac13|k|^\frac23\|D_y^N\theta_k\|_{L_y^2}^2\leq-2{\rm Re}\langle kND_y^{N-1}\theta_k,(1+M_k)D_y^N\theta_k\rangle_{L_y^2}\\
&\quad\quad\quad \leq \frac18\mu^\frac13|k|^\frac23\|D_y^N\theta_k\|_{L_y^2}^2+32N^2\mu^{-\frac13}|k|^\frac43\|D_y^{N-1}\theta_k\|_{L_y^2}^2.
\end{split}
\end{equation*}
Integrating in $t$ yields
\beno
\|D_y^N\theta_k(t)\|_{L_y^2}^2 &\leq& 2\|D_y^N\theta_k^{(0)}\|_{L_y^2}^2e^{-\frac1{16}\mu^\frac13|k|^\frac23 t}\\
&&  + \,C_N\int_0^t\mu^{-\frac13}|k|^\frac43\|D_y^{N-1}\theta_k(s)\|_{L_y^2}^2e^{-\frac1{16}\mu^\frac13|k|^\frac23 (t-s)}ds.
\eeno
Then the first inequality in \eqref{S1.1prop1eq2} follows from the induction assumption. We define
$$
M_k'=\varphi(\nu^\frac13|k|^{-\frac13}{\rm sgn}(k)D_y)\text{ for } k\neq0,\qquad M_0'=0
$$
and multiply the $\omega_k$ equation by $\omega_k$ and $M_k'\omega_k$ to obtain
\begin{equation*}
\begin{split}
\frac{d}{dt} \langle (1+M_k')\omega_k,\omega_k\rangle_{L_y^2} +\nu\|D_y\omega_k\|_{L_y^2}^2&+ \sigma \nu k^2\|\omega_k\|_{L_y^2}^2+\frac14\nu^\frac13|k|^\frac23\|\omega_k\|_{L_y^2}^2\\
&\quad \leq-2{\rm Re}\langle ik\theta_k,(1+M_k')\omega_k\rangle_{L_y^2}.
\end{split}
\end{equation*}
Applying Young's inequality to the right-hand side yields
\begin{equation*}
\begin{split}
\frac{d}{dt}\big(\langle (1+M_k')\omega_k,\omega_k\rangle_{L_y^2}\big)+\nu\|D_y\omega_k\|_{L_y^2}^2+\sigma\, \nu k^2&\|\omega_k\|_{L_y^2}^2+\frac18\nu^\frac13|k|^\frac23\|\omega_k\|_{L_y^2}^2\\
&\qquad\leq32 \nu^{-\frac13}|k|^\frac43\|\theta_k\|_{L_y^2}^2 .
\end{split}
\end{equation*}
Integrating in $t$ and using the first inequality in \eqref{S1.1prop1eq1}, we obtain
\begin{equation*}
\|\omega_k(t)\|_{L_y^2}^2\leq C\big(\|\omega_k^{(0)}\|_{L_y^2}^2+(\nu\mu)^{-\frac13}|k|^\frac23\|\theta_k^{(0)}\|_{L_y^2}^2\big)e^{-\frac{1}{16}\nu^\frac13|k|^\frac23t}.
\end{equation*}
Differentiating the equation on $\omega_k$ and using the estimates for $\theta_k$, we can deduce the second inequality in \eqref{S1.1prop1eq2}, under the assumption $\nu\lesssim \mu$. This
completes the proof of Proposition \ref{linst}.
\end{proof}

\vskip .1in
Proposition \ref{linst2} is a consequence of Proposition \ref{linst}. We recall that the
operator $\Lambda_t$ defined in (\ref{elli}) commutes with $\p_t + y\p_x$, namely, for any $b\in \mathbb R$,
$$
\Lambda_t^b\, (\p_t + y \p_x) = (\p_t + y \p_x) \,\Lambda_t^b.
$$
Therefore it commutes with the linear equation in (\ref{both}).

\vskip .1in
\begin{proof}[Proof of Proposition \ref{linst2}] For any $b\in \mathbb R$, we apply $\Lambda_t^b$
to the equations in (\ref{both}). Since $\Lambda_t^b$ commutes the equations in (\ref{both}), 	
the upper bounds in Proposition \ref{linst} and the estimates in the proof of Proposition \ref{linst} remain valid if we replace
$\om$ and $\theta$ by $\Lambda^b_t \om$ and $\Lambda_t^b \theta$, respectively, in Proposition \ref{linst}. Similarly, since any horizontal derivatives also commute with the linear
equations in (\ref{both}), $|D_x|^{\frac13} \Lambda_t^b \theta$ enjoys similar estimates
as those for $\theta$. When we take the $L^2_x$-norm, or equivalently sum over $k$ of those
estimates for $\Lambda^b_t \om$ and $|D_x|^{\frac13} \Lambda_t^b \theta$, together
with the corresponding time integral bounds, we obtain the desired estimates in Proposition
\ref{linst2}, namely
\begin{equation*}
\begin{split}
&\|\Lambda_t^b\omega\|_{L_t^\infty(L^2)}+\nu^\frac12\|D_y\Lambda_t^b\omega\|_{L_t^2(L^2)}
+ \sigma \nu^\frac12 \|D_x \Lambda_t^b\omega\|_{L_t^2(L^2)} +\nu^\frac16\||D_x|^{\frac13}\Lambda_t^b\omega\|_{L_t^2(L^2)}\\
&+(\nu\mu)^{-\frac16}\Big(\||D_x|^\frac13\Lambda_t^b\theta\|_{L_t^\infty(L^2)}
+\mu^\frac12\|D_y|D_x|^\frac13\Lambda_t^b\theta\|_{L_t^2(L^2)} + \sigma \mu^{\frac12}\||D_x|^\frac43\Lambda_t^b\theta\|_{L_t^2(L^2)}\\
&\qquad\qquad\quad +\mu^\frac16\||D_x|^{\frac23}\Lambda_t^b\theta\|_{L_t^2(L^2)}\Big)\\
&\leq C\big(\|\omega^{(0)}\|_{H^b}+(\nu\mu)^{-\frac16}\||D_x|^{\frac13}\theta^{(0)}\|_{H^b}\big).
\end{split}
\end{equation*}
The coefficient $(\nu \mu)^{-\frac16}$ helps unify the bound in terms of the initial data. This
completes the proof of Proposition \ref{linst2}.
\end{proof}

\vskip .3in
\section{Proof of Theorem \ref{Non1}}
\label{nonpr1}

\vskip .1in
This section presents the proof of Theorem \ref{Non1} stating the nonlinear stability
for (\ref{S1eq.p}). The framework is the bootstrap argument, which consists of two main steps.
The first step is to establish the {\it a priori} bounds while the second is to apply
and complete the bootstrap argument by using the {\it a priori} bounds. Main efforts are
devoted to obtaining suitable {\it a priori} bounds.  As described in the introduction, one component in achieving the bounds is to
extract the enhanced dissipation by constructing and applying suitable Fourier multipliers.
Another one is to bound the nonlinear terms suitably. To do so,  we separate the horizontal zeroth
mode from the non-zeroth modes to distinguish their different behaviors. We make use of
sharp commutator estimates.

\vskip .1in
To help prepare for the proof, we recall several notations and basic facts. We make extensive
use of the operator $\Lambda_t$ defined in (\ref{elli}). The basic properties stated in
the following lemma will be used frequently.
\begin{lem} \label{Lam}
	The operator $\Lambda_t$ defined in (\ref{elli}) satisfies the following properties
	\begin{enumerate}
		\item For any $b\in \mathbb R$, $\Lambda_t^b$ commutes with $\p_t + y\p_x$, namely
		$$
		\Lambda^b_t \, (\p_t + y\p_x) = (\p_t + y\p_x) \, \Lambda^b_t.
		$$
		
		\item For any $b>0$,
		\begin{equation*}\label{S2.1eq3}
		\|\Lambda_t^b(fg)\|_{L^2}\leq \|f\|_{L^\infty}\|\Lambda_t^bg\|_{L^2}+\|g\|_{L^\infty}\|\Lambda_t^bf\|_{L^2}.
		\end{equation*}
		Moreover, for $b>1$, we have
		\begin{equation*}\label{S2.1eq4}
		\|f(t)\|_{L^\infty}\leq C\|\widehat f(t)\|_{L^1}\leq C\|\Lambda_t^bf(t)\|_{L^2}.
		\end{equation*}
		and consequently,
		$$
		\|\Lambda_t^b(fg)\|_{L^2}\leq C\, \|\Lambda_t^bf\|_{L^2}\, \|\Lambda_t^bg\|_{L^2}.
		$$
	\end{enumerate}
\end{lem}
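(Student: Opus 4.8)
The plan is to treat the two parts separately: the commutation identity (1) follows from a direct symbol computation, while the product estimate and the $L^\infty$ embedding in (2) are obtained by conjugating $\Lambda_t$ to the flat Bessel potential $\langle D\rangle=(1-\Delta)^{1/2}$ through a shear, which reduces everything to classical harmonic analysis with constants that are uniform in $t$.

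For part (1), I would argue on the Fourier side. The multiplier $\Lambda_t^b$ has symbol $m_t(k,\xi)=(1+k^2+(\xi+tk)^2)^{b/2}$, while on the Fourier transform side $y\p_x$ becomes $-k\p_\xi$ (since $y\mapsto i\p_\xi$ and $\p_x\mapsto ik$), so that $\p_t+y\p_x$ becomes the transport field $\p_t-k\p_\xi$. To prove the commutation it then suffices to check that $m_t$ is annihilated by this field, i.e. $(\p_t-k\p_\xi)m_t=0$. This is immediate: the base symbol $1+k^2+(\xi+tk)^2$ is constant along the characteristics $\xi+tk=\mathrm{const}$ already exploited in the change of variables $\eta=\xi+tk$ used to derive \eqref{rep}, so $(\p_t-k\p_\xi)(\xi+tk)=k-k=0$ forces $(\p_t-k\p_\xi)m_t=0$. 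Expanding $\widehat{(\p_t+y\p_x)\Lambda_t^bF}$ and $\widehat{\Lambda_t^b(\p_t+y\p_x)F}$ and using this annihilation yields the identity.

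For part (2), the key device is the shear $(S_\tau f)(x,y)=f(x+\tau y,y)$, which is a measure-preserving change of variables on $\T\times\R$, hence an isometry on every $L^p$ and an algebra homomorphism, $S_\tau(fg)=(S_\tau f)(S_\tau g)$. A one-line computation gives $\widehat{S_\tau f}(k,\xi)=\widehat f(k,\xi-\tau k)$, from which one reads off the conjugation $\Lambda_t^b=S_{-t}\langle D\rangle^b S_t$. The product inequality then reduces exactly to the classical fractional Leibniz (Moser/Kato--Ponce) estimate for $\langle D\rangle^b$: since $\|\Lambda_t^b(fg)\|_{L^2}=\|\langle D\rangle^b\big((S_tf)(S_tg)\big)\|_{L^2}$, applying the flat inequality together with $\|S_tf\|_{L^\infty}=\|f\|_{L^\infty}$ and $\|\langle D\rangle^b S_tg\|_{L^2}=\|\Lambda_t^bg\|_{L^2}$ returns the stated bound with the same constant.

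For the embedding $\|f\|_{L^\infty}\le C\|\widehat f\|_{L^1}\le C\|\Lambda_t^bf\|_{L^2}$ with $b>1$, the first inequality is Fourier inversion and the second is Cauchy--Schwarz:
\[
\|\widehat f\|_{L^1}\le\Big(\sum_{k\in\Z}\int_{\R}(1+k^2+(\xi+tk)^2)^{-b}\,d\xi\Big)^{1/2}\|\Lambda_t^bf\|_{L^2}.
\]
The substitution $\eta=\xi+tk$ strips all $t$-dependence from the prefactor, leaving $\sum_k\int_\R(1+k^2+\eta^2)^{-b}\,d\eta$, which converges precisely when $b>1$ and produces a constant $C$ independent of $t$. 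Inserting this embedding into the product inequality for both factors gives the final consequence $\|\Lambda_t^b(fg)\|_{L^2}\le C\|\Lambda_t^bf\|_{L^2}\|\Lambda_t^bg\|_{L^2}$. The only genuinely analytic input is the flat fractional Leibniz rule for $b>1$, which is classical; I expect the point requiring the most care to be the uniformity of all constants in $t$, which is exactly what the shear conjugation and the substitution $\eta=\xi+tk$ are designed to guarantee.
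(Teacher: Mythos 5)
Your proof is correct, and it is worth noting at the outset that the paper never actually proves this lemma: it is stated as a list of basic facts, and the intended argument is only implicit in the frequency-side manipulations of Sections \ref{nonpr1}--\ref{nonpr2}. Your part (1) coincides with that implicit computation -- under the Fourier transform $\p_t+y\p_x$ becomes $\p_t-k\p_\xi$ (exactly as in the derivation of \eqref{rep}), and since the symbol $m_t(k,\xi)=(1+k^2+(\xi+tk)^2)^{b/2}$ is a smooth function of $(k,\xi+tk)$, the chain rule gives $(\p_t-k\p_\xi)m_t=0$, which is your annihilation step -- so there is nothing to compare there. For part (2) you take a genuinely different route. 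Your conjugation identity $\Lambda_t^b=S_{-t}\langle D\rangle^b S_t$ with the measure-preserving shear $(S_\tau f)(x,y)=f(x+\tau y,y)$ is verified correctly ($\widehat{S_\tau f}(k,\xi)=\widehat f(k,\xi-\tau k)$, and the composition of the three multipliers returns $(1+k^2+(\xi+tk)^2)^{b/2}$), and it reduces the Leibniz estimate to the flat Kato--Ponce/Moser inequality while making the $t$-uniformity of all constants automatic. The paper's own style, visible in the treatment of $I_{22}$, $I_{32}$, $J_1$ and their relatives, is instead to estimate the convolution directly on the Fourier side using the triangle-type inequality $\Lambda_t^b(k,\xi)\leq C_b\big(\Lambda_t^b(k-l,\xi-\eta)+\Lambda_t^b(l,\eta)\big)$ (valid because the shear $(k,\xi)\mapsto(k,\xi+tk)$ is linear in frequency) together with Young's inequality; that elementary route yields the product bound with $\|\widehat f\|_{L^1}$, $\|\widehat g\|_{L^1}$ in place of $\|f\|_{L^\infty}$, $\|g\|_{L^\infty}$, which, combined with your Cauchy--Schwarz embedding (correct as written; the computation $\int_\R(1+k^2+\eta^2)^{-b}\,d\eta=c_b(1+k^2)^{\frac12-b}$ makes transparent why $b>1$ is exactly the threshold), already suffices for every application in the paper. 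So your approach buys the stronger genuinely-$L^\infty$ form of the estimate for all $b>0$, at the price of importing Kato--Ponce as a black box -- and you should note explicitly that you need it on the mixed domain $\mathbb T\times\mathbb R$, where it holds by the same paraproduct proof as on $\R^2$ -- while the elementary convolution route is self-contained but gives only the $\widehat{\cdot}\in L^1$ variant. One cosmetic caveat: the lemma as printed carries constant $1$ in the first product inequality, which neither your route nor the elementary one literally produces; this is harmless, since every application in the paper absorbs constants $C$, but your phrase ``with the same constant'' should be read as ``with the Kato--Ponce constant,'' not as constant $1$.
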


Recall that we assume $\mu=\nu$.
The Fourier multiplier operator $\mathcal M$ employed here is defined as follows.
We choose a real-valued, non-decreasing function $\varphi\in C^\infty(\R)$ such that $0\leq\varphi\leq1$ and $\varphi'=\frac14$ on $[-1,1]$. We define the multiplier ${\mathcal M}={\mathcal M}(D_x,D_y)$ as ${\mathcal M}={\mathcal M}_1+{\mathcal M}_2+1$ with ${\mathcal M}_1$ and ${\mathcal M}_2$ given by
\begin{equation}\label{S2.1eq1}
\begin{split}
{\mathcal M}_1(k,\xi)&=\varphi\big(\nu^\frac13|k|^{-\frac13}{\rm sgn}(k)\xi\big),\  k\neq0,\\
{\mathcal M}_2(k,\xi)&=\frac{1}{ k^2}\Big(\arctan\frac{\xi}{k}+\frac{\pi}{2}\Big),\ k\neq0,\\
{\mathcal M}_1(0,\xi)&={\mathcal M}_2(0,\xi)=0.
\end{split}
\end{equation}
Then ${\mathcal M}$ is a self-adjoint Fourier multiplier and verifies that
$$1\leq{\mathcal M}\leq2+\pi.$$
Finally we recall the projectors onto the horizontal zeroth mode and the non-zeroth modes,
\begin{equation}\label{proj}
f_0:=({\mathbb P}_0f)(y)=\frac1{2\pi}\int_{\mathbb T}f(x,y)dx,\qquad f_{\neq}:={\mathbb P}_{\neq} f=f-{\mathbb P}_0f.
\end{equation}

\vskip .1in
\begin{proof}[Proof of Theorem \ref{Non1}] Applying $\Lambda_t^b$ to \eqref{S1eq.p} and invoking the properties of
$\Lambda_t^b$ in Lemma \ref{Lam}, we have
\begin{equation*}
\begin{cases}
\p_t \Lambda_t^b\omega+y\p_x \Lambda_t^b\omega-\nu \Delta \Lambda_t^b\omega+\Lambda_t^b\big(({\bf u}\cdot\nabla)  \omega\big) =\p_x \Lambda_t^b\theta ,\\
\p_t\Lambda_t^b\theta+y\p_x \Lambda_t^b\theta-\nu\Delta\Lambda_t^b\theta+\Lambda_t^b\big(({\bf u}\cdot\nabla)  \theta\big)=0.
\end{cases}
\end{equation*}
We then multiply the equations above by ${\mathcal M}\Lambda_t^b\omega$ and ${\mathcal M}\Lambda_t^b\theta$, respectively, and integrate over $\mathbb T \times \mathbb R$. The combination $y\p_x -\nu \Delta$ creates the enhanced dissipation. As we explained in the introduction,  we do not need the full Laplacian dissipation and the vertical dissipation is sufficient.  By \eqref{idcom}, we have
\begin{equation*}
\begin{split}
&2{\rm Re} \langle y\p_x f,{\mathcal M}f\rangle_{L^2}=\langle \big[ {\mathcal M}, y\p_x\big]f,f\rangle_{L^2}=\langle(k\p_\xi{\mathcal M})(D)f,f\rangle_{L^2}
\end{split}
\end{equation*}
since ${\mathcal M}$ is self-adjoint and $y\p_x$ is skew-adjoint. Invoking the equality above, we have
\begin{equation}\label{S1.2eq7'}
\begin{split}
\frac{d}{dt}\|\sqrt{\mathcal M}\Lambda_t^b\theta\|_{L^2}^2&+2\nu\|\nabla\sqrt{\mathcal M}\Lambda_t^b \theta\|_{L^2}^2
+\langle (k\p_\xi{\mathcal M})(D)\Lambda_t^b\theta,\Lambda_t^b\theta\rangle_{L^2}\\
&\qquad\qquad\qquad
+2\langle\Lambda_t^b\big({\bf u}\cdot\nabla\theta\big),{\mathcal M}\Lambda_t^b\theta\rangle_{L^2}=0.
\end{split}
\end{equation}
Similarly,
\begin{equation}\label{S1.2eq6}
\begin{split}
&\frac{d}{dt}\|\sqrt{\mathcal M}\Lambda_t^b\omega\|_{L^2}^2+2\nu\|\nabla\sqrt{\mathcal M}\Lambda_t^b\omega\|_{L^2}^2
+\langle (k\p_\xi{\mathcal M})(D)\Lambda_t^b\omega,\Lambda_t^b\omega\rangle_{L^2}\\
&\qquad\qquad + 2\langle\Lambda_t^b\big({\bf u}\cdot\nabla\omega\big),{\mathcal M}\Lambda_t^b\omega\rangle_{L^2}
=2\langle\p_x\Lambda_t^b\theta,{\mathcal M}\Lambda_t^b\omega\rangle_{L^2}.
\end{split}
\end{equation}
Multiplying the $\theta$ equation by ${\mathcal M}|D_x|^\frac23\Lambda_t^b\theta$ gives
\begin{equation}\label{S1.2eq7}
\begin{split}
&\frac{d}{dt}\|\sqrt{\mathcal M}|D_x|^\frac13\Lambda_t^b\theta\|_{L^2}^2+2\nu\|\nabla\sqrt{\mathcal M}|D_x|^\frac13\Lambda_t^b \theta\|_{L^2}^2\\
&+\langle|D_x|^\frac23 (k\p_\xi{\mathcal M})(D)\Lambda_t^b\theta,\Lambda_t^b\theta\rangle_{L^2}
+2 \langle\Lambda_t^b\big({\bf u}\cdot\nabla\theta\big),|D_x|^\frac23{\mathcal M}\Lambda_t^b\theta\rangle_{L^2}=0,
\end{split}
\end{equation}
According to the definition \eqref{S2.1eq1} of ${\mathcal M}$, we have
\begin{equation*}
\begin{split}
k\p_\xi{\mathcal M}(k,\xi)=\nu^{\frac13}|k|^\frac23\varphi'\big(\nu^\frac13|k|^{-\frac13}{\rm sgn}(k)\xi\big)+\frac{1}{k^2+\xi^2}
\end{split}
\end{equation*}
for $k\neq0$, $\xi\in\R$.
This implies that, for $k\neq0$, $\xi\in\R$,
$$
2\nu(\xi^2+k^2){\mathcal M}(k,\xi)+k\p_\xi{\mathcal M}(k,\xi)\geq\nu(\xi^2+k^2)+\frac14\nu^\frac13|k|^\frac23+\frac1{\xi^2+k^2}.
$$
 Therefore,
\begin{equation*}
\begin{split}
2\nu\|\nabla\sqrt{\mathcal M}f\|_{L^2}^2&+\langle(k\p_\xi{\mathcal M})(D)f,f\rangle_{L^2}\\
&\geq\nu\|\nabla f\|_{L^2}^2+\frac14\nu^\frac13\||D_x|^\frac13f\|_{L^2}^2+\|(-\Delta)^{-\frac12}f_{\neq}\|_{L^2}^2,
\end{split}
\end{equation*}
where $f_{\neq}$ is defined by \eqref{proj}. \eqref{S1.2eq6}, \eqref{S1.2eq7'} and  \eqref{S1.2eq7} then becomes
\begin{equation}\label{S1.2eq9}
\begin{split}
&\frac{d}{dt}\|\sqrt{\mathcal M}\Lambda_t^b\omega\|_{L^2}^2+\nu\|\nabla\Lambda_t^b\omega\|_{L^2}^2
+\frac14\nu^\frac13\| |D_x|^\frac13\Lambda_t^b\omega\|_{L^2}^2+\|(-\Delta)^{-\frac12}\Lambda_t^b\omega_{\neq}\|_{L^2}^2\\
&\qquad\qquad\qquad \leq 2\, \underbrace{\langle\p_x\Lambda_t^b\theta,{\mathcal M}\Lambda_t^b\omega\rangle_{L^2}}_{=I_1}
-2 \underbrace{\langle\Lambda_t^b\big({\bf u}\cdot\nabla\omega\big),{\mathcal M}\Lambda_t^b\omega\rangle_{L^2}}_{=I_2},
\end{split}
\end{equation}
\begin{equation}\label{S1.2eq10}
\begin{split}
\frac{d}{dt}\|\sqrt{\mathcal M}\Lambda_t^b\theta\|_{L^2}^2&+\nu\|\nabla\Lambda_t^b \theta\|_{L^2}^2
+\frac14\nu^\frac13\||D_x|^\frac13\Lambda_t^b\theta\|_{L^2}^2+\|(-\Delta)^{-\frac12}\Lambda_t^b\theta_{\neq}\|_{L^2}^2\\
&\qquad\qquad\qquad\qquad\qquad
\leq -2\underbrace{\langle\Lambda_t^b\big({\bf u}\cdot\nabla\theta\big),{\mathcal M}\Lambda_t^b\theta\rangle_{L^2}}_{=I_3}
\end{split}
\end{equation}
and
\begin{equation}\label{S1.2eq11}
\begin{split}
&\frac{d}{dt}\|\sqrt{\mathcal M}|D_x|^\frac13\Lambda_t^b\theta\|_{L^2}^2+\nu\|\nabla|D_x|^\frac13\Lambda_t^b \theta\|_{L^2}^2+\frac14\nu^\frac13\||D_x|^\frac23\Lambda_t^b\theta\|_{L^2}^2\\
&\qquad+\|(-\Delta)^{-\frac12}|D_x|^\frac13\Lambda_t^b\theta_{\neq}\|_{L^2}^2
\leq -2\underbrace{\langle\Lambda_t^b\big({\bf u}\cdot\nabla\theta\big),|D_x|^\frac23{\mathcal M}\Lambda_t^b\theta\rangle_{L^2}}_{=I_4}.
\end{split}
\end{equation}
Using the $L^2$-boundedness of ${\mathcal M}$, we have
\ben
|I_1|=|\langle\p_x\Lambda_t^b\theta,{\mathcal M}\Lambda_t^b\omega\rangle_{L^2}|\leq \||D_x|^\frac23\Lambda_t^b\theta\|_{L^2}\||D_x|^\frac13\Lambda_t^b\omega\|_{L^2}. \label{i1b}
\een
Since ${\bf u}$ is given by $\om$ via the Biot-Savart law,
\begin{equation*}
{\bf u}=-\nabla^\perp(-\Delta)^{-1}\omega=\begin{pmatrix}\p_y(-\Delta)^{-1}\omega\\ -\p_x(-\Delta)^{-1}\omega \end{pmatrix},
\end{equation*}
we can decompose ${\bf u}$ into two parts according to \eqref{proj},
\begin{equation}\label{S1.2eq8}
\begin{split}
{\bf u}_0&={\mathbb P}_0{\bf u}=\begin{pmatrix}u_{0}\\0\end{pmatrix},\quad\text{with } u_0=\p_y(-\p_y^2)^{-1}\omega_0,\\
{\bf u}_{\neq}&={\mathbb P}_{\neq}{\bf u}=-\nabla^\perp(-\Delta)^{-1}\omega_{\neq}.
\end{split}
\end{equation}
Therefore we can write
\begin{equation*}
\begin{split}
I_2&=\langle\Lambda_t^b\big({\bf u}\cdot\nabla\omega\big),{\mathcal M}\Lambda_t^b\omega\rangle_{L^2}=I_{21}+I_{22},
\end{split}
\end{equation*}
with
\begin{equation*}
\begin{split}
I_{21}&=\langle\Lambda_t^b\big({\bf u}_{\neq}\cdot\nabla\omega\big),{\mathcal M}\Lambda_t^b\omega\rangle_{L^2},\qquad
I_{22}=\langle\Lambda_t^b\big({\bf u}_0\cdot\nabla\omega\big),{\mathcal M}\Lambda_t^b\omega\rangle_{L^2}.
\end{split}
\end{equation*}
Using the boundedness of ${\mathcal M}$ and Lemma \ref{Lam}, we have for $b>1$,
\begin{equation*}
\begin{split}
|I_{21}|&\leq \|\Lambda_t^b\big({\bf u}_{\neq}\cdot\nabla\omega\big)\|_{L^2}\|\Lambda_t^b\omega\|_{L^2}\leq\|\Lambda_t^b{\bf u}_{\neq}\|_{L^2}\|\nabla\Lambda_t^b\omega\|_{L^2}\|\Lambda_t^b\omega\|_{L^2}.
\end{split}
\end{equation*}
By \eqref{S1.2eq8},
\begin{equation*}
\|\Lambda_t^b{\bf u}_{\neq}\|_{L^2}\leq \|\nabla^\perp(-\Delta)^{-1}\Lambda_t^b\omega_{\neq}\|_{L^2}\leq\|(-\Delta)^{-\frac12}\Lambda_t^b\omega_{\neq}\|_{L^2},
\end{equation*}
Therefore,  for $b>1$,
\begin{equation*}
\begin{split}
|I_{21}|&\leq\|(-\Delta)^{-\frac12}\Lambda_t^b\omega_{\neq}\|_{L^2}\|\nabla\Lambda_t^b\omega\|_{L^2}\|\Lambda_t^b\omega\|_{L^2}.
\end{split}
\end{equation*}
The key point is to bound $I_{22}$.
To simplify the notation, we write ${\mathcal M}_t^b =\sqrt{ \mathcal M }\Lambda_t^b$ or
$$
{\mathcal M}_t^b(k,\xi):=\sqrt{\mathcal M(k,\xi)}\, \Lambda_t^b (k, \xi) =\sqrt{\mathcal M(k,\xi)}\, (1+k^2+(\xi+kt)^2)^{b/2}.
$$
It follows from \eqref{S1.2eq8} that ${\bf u}_0\cdot\nabla\omega=u_0\p_x\omega=u_0\p_x\omega_{\neq}$ since $\omega_0$ is independent of $x$.
Therefore,
\begin{equation*}
\begin{split}
I_{22}&=\langle\Lambda_t^b\big({\bf u}_0\cdot\nabla\omega\big),{\mathcal M}\Lambda_t^b\omega\rangle_{L^2}=\langle{\mathcal M}_t^b( u_0\p_x\omega_{\neq}),{\mathcal M}_t^b\omega\rangle_{L^2},
\end{split}
\end{equation*}
Due to the cancellations
$$\langle{\mathcal M}_t^b( u_0\p_x\omega_{\neq}),{\mathcal M}_t^b\omega_0\rangle_{L^2}=0,$$
$$\langle u_0\p_x({\mathcal M}_t^b\omega_{\neq}),{\mathcal M}_t^b\omega_{\neq}\rangle_{L^2}=0,$$
we have
\begin{equation*}
\begin{split}
I_{22}&=\langle{\mathcal M}_t^b( u_0\p_x\omega_{\neq}),{\mathcal M}_t^b\omega_{\neq}\rangle_{L^2}\\
&=\langle{\mathcal M}_t^b( u_0\p_x\omega_{\neq})-u_0\p_x({\mathcal M}_t^b\omega_{\neq}),{\mathcal M}_t^b\omega_{\neq}\rangle_{L^2}.
\end{split}
\end{equation*}
By Plancherel's theorem,
\begin{equation*}
\begin{split}
I_{22}
=\sum_{k\neq0}\iint \big({\mathcal M}_t^b(k,\xi)-{\mathcal M}_t^b(k,\xi-\eta)\big)\widehat u(0,\eta)ik\widehat\omega(k,\xi-\eta)
{\mathcal M}_t^b(k,\xi)\overline{\widehat \omega(k,\xi)}d\xi d\eta\\
=-\sum_{k\neq0}\iint \big({\mathcal M}_t^b(k,\xi)-{\mathcal M}_t^b(k,\xi-\eta)\big)\frac{1}{\eta}\widehat \omega(0,\eta)k\widehat\omega(k,\xi-\eta)
{\mathcal M}_t^b(k,\xi)\overline{\widehat \omega(k,\xi)}d\xi d\eta.
\end{split}
\end{equation*}
By Taylor's formula,
$$|{\mathcal M}_t^b(k,\xi)-{\mathcal M}_t^b(k,\xi-\eta)|\leq \int_0^1|\p_\xi{\mathcal M}_t^b(k,\xi-s\eta)||\eta|ds.$$
Using the explicit expression of ${\mathcal M}_t^b$ we deduce that
\begin{equation*}
\begin{split}
|\p_\xi{\mathcal M}_t^b(k,\xi)|&\leq C\big(\nu^\frac13|k|^{-\frac13}+\frac{1}{|k|}\big)\big(1+k^2+(\xi+kt)^2\big)^{\frac{b}2}.
\end{split}
\end{equation*}
Therefore,
\begin{equation*}
\begin{split}
|I_{22}|&\leq\sum_{k\neq0}C(\nu^\frac13|k|^{\frac23}+1)\iint \Big(\big(1+k^2+(\xi+kt)^2\big)^{\frac{b}2}+\big(1+k^2+(\xi-\eta+kt)^2\big)^{\frac{b}2}\Big)\\
&\qquad\qquad\qquad\qquad\quad\qquad\qquad\quad\times|\widehat \omega(0,\eta)||\widehat\omega(k,\xi-\eta) |
{\mathcal M}_t^b(k,\xi)|\widehat \omega(k,\xi)|d\xi d\eta\\
&\leq C\nu^\frac13\|\Lambda_t^b\omega_0\|_{L^2}\||D_x|^\frac13\Lambda_t^b\omega\|_{L^2}^2+C\|\Lambda_t^b\omega_0\|_{L^2}\|\Lambda_t^b\omega_{\neq}\|_{L^2}^2.
\end{split}
\end{equation*}
Consequently,
\ben
|I_{2}|&\leq& C\,\|(-\Delta)^{-\frac12}\Lambda_t^b\omega_{\neq}\|_{L^2}\|\nabla\Lambda_t^b\omega\|_{L^2}\|\Lambda_t^b\omega\|_{L^2} \notag\\
&&+ \,C\,\nu^\frac13\|\Lambda_t^b\omega_0\|_{L^2}\||D_x|^\frac13\Lambda_t^b\omega\|_{L^2}^2
+ \,C\,\|\Lambda_t^b\omega_0\|_{L^2}\|\Lambda_t^b\omega_{\neq}\|_{L^2}^2 \notag \\
&\leq& C\,\|(-\Delta)^{-\frac12}\Lambda_t^b\omega_{\neq}\|_{L^2}\|\nabla\Lambda_t^b\omega\|_{L^2}\|\Lambda_t^b\omega\|_{L^2} \notag\\
&&+ \,C\,\nu^\frac13\|\Lambda_t^b\omega_0\|_{L^2}\||D_x|^\frac13\Lambda_t^b\omega\|_{L^2}^2\notag \\
&& + \,C\,\|\Lambda_t^b\omega_0\|_{L^2}\, \|(-\Delta)^{-\frac12}\Lambda_t^b\omega_{\neq}\|_{L^2}\,\|\nabla\Lambda_t^b\omega\|_{L^2}. \label{i2b}
\een
$I_3$ can be bounded similarly as $I_2$. We write $I_3$ as $I_3=I_{31}+I_{32}$ with
\begin{equation*}
\begin{split}
I_{31}&=\langle\Lambda_t^b\big({\bf u}_{\neq}\cdot\nabla\theta\big),{\mathcal M}\Lambda_t^b\theta\rangle_{L^2},\qquad
I_{32}=\langle\Lambda_t^b\big({\bf u}_0\cdot\nabla\theta\big),{\mathcal M}\Lambda_t^b\theta\rangle_{L^2}
\end{split}
\end{equation*}
and obtain the following bound
\ben
|I_{3}|
&\leq& \|(-\Delta)^{-\frac12}\Lambda_t^b\omega_{\neq}\|_{L^2}\|\nabla\Lambda_t^b\theta\|_{L^2}\|\Lambda_t^b\theta\|_{L^2} \notag\\
&& + \,  C\nu^\frac13\|\Lambda_t^b\omega_0\|_{L^2}\||D_x|^\frac13\Lambda_t^b\theta\|_{L^2}^2+C\|\Lambda_t^b\omega_0\|_{L^2}\|\Lambda_t^b\theta_{\neq}\|_{L^2}^2 \notag\\
&\leq& \|(-\Delta)^{-\frac12}\Lambda_t^b\omega_{\neq}\|_{L^2}\|\nabla\Lambda_t^b\theta\|_{L^2}\|\Lambda_t^b\theta\|_{L^2} \notag\\
&& + \,  C\,\nu^\frac13\|\Lambda_t^b\omega_0\|_{L^2}\||D_x|^\frac13\Lambda_t^b\theta\|_{L^2}^2
\notag \\
&& + \, C\,\|\Lambda_t^b\omega_0\|_{L^2}\|(-\Delta)^{-\frac12}\Lambda_t^b\theta_{\neq}\|_{L^2}\,\|\nabla\Lambda_t^b\theta\|_{L^2}. \label{i3b}
\een
We decompose $I_4$ as $I_4=I_{41}+I_{42}$ with
\begin{equation*}
\begin{split}
I_{41}&=\langle\Lambda_t^b\big({\bf u}_{\neq}\cdot\nabla\theta\big),|D_x|^\frac23{\mathcal M}\Lambda_t^b\theta\rangle_{L^2},\qquad
I_{42}=\langle\Lambda_t^b\big({\bf u}_0\cdot\nabla\theta\big),|D_x|^\frac23{\mathcal M}\Lambda_t^b\theta\rangle_{L^2}.
\end{split}
\end{equation*}
The estimates for $I_{42}$ are the same as those for $I_{22}$,
\begin{equation*}
\begin{split}
|I_{42}|
&\leq C\nu^\frac13\|\Lambda_t^b\omega_0\|_{L^2}\||D_x|^\frac23\Lambda_t^b\theta\|_{L^2}^2+C\|\Lambda_t^b\omega_0\|_{L^2}\||D_x|^\frac13\Lambda_t^b\theta_{\neq}\|_{L^2}^2.
\end{split}
\end{equation*}
For $I_{41}$, we have
\begin{equation*}
\begin{split}
|I_{41}|&\leq\||D_x|^\frac13\Lambda_t^b\big({\bf u}_{\neq}\cdot\nabla\theta\big)\|_{L^2} \||D_x|^\frac13\Lambda_t^b\theta\|_{L^2}.
\end{split}
\end{equation*}
Furthermore,
\begin{equation*}
\begin{split}
\||D_x|^\frac13\Lambda_t^b\big({\bf u}_{\neq}\cdot\nabla\theta\big)\|_{L^2}\leq \|\Lambda_t^b{\bf u}_{\neq}\|_{L^2} \||D_x|^\frac13\Lambda_t^b\nabla\theta\|_{L^2}+\||D_x|^\frac13\Lambda_t^b{\bf u}_{\neq}\|_{L^2}\|\Lambda_t^b\nabla\theta\|_{L^2}
\end{split}
\end{equation*}
and
\begin{equation*}
\begin{split}
\|\Lambda_t^b{\bf u}_{\neq}\|_{L^2}\leq \|(-\Delta)^{-\frac12}\Lambda_t^b\omega_{\neq}\|_{L^2},\qquad\||D_x|^\frac13\Lambda_t^b{\bf u}_{\neq}\|_{L^2}\leq \||D_x|^\frac13\Lambda_t^b\omega\|_{L^2}.
\end{split}
\end{equation*}	
Therefore, we deduce that
\ben
|I_4| &\le&  C\nu^\frac13\|\Lambda_t^b\omega_0\|_{L^2}\||D_x|^\frac23\Lambda_t^b\theta\|_{L^2}^2+C\|\Lambda_t^b\omega_0\|_{L^2}\||D_x|^\frac13\Lambda_t^b\theta_{\neq}\|_{L^2}^2 \notag \\
&& + \, C\, \|(-\Delta)^{-\frac12}\Lambda_t^b\omega_{\neq}\|_{L^2}\, \||D_x|^\frac13\Lambda_t^b\nabla\theta\|_{L^2}\||D_x|^\frac13\Lambda_t^b\theta\|_{L^2} \notag\\
&& + \,C\,\||D_x|^\frac13\Lambda_t^b\omega\|_{L^2}\, \|\Lambda_t^b\nabla\theta\|_{L^2}\,\||D_x|^\frac13\Lambda_t^b\theta\|_{L^2}. \label{i4b}
\een
Inserting the upper bounds (\ref{i1b}), (\ref{i2b}), (\ref{i3b}) and (\ref{i4b})
in (\ref{S1.2eq9}), (\ref{S1.2eq10}) and (\ref{S1.2eq11}) and integrating in time, we obtain
\ben
&&\|\Lambda_t^b\omega\|_{L_t^\infty(L^2)}^2+\nu\|\nabla \Lambda_t^b\omega\|_{L_t^2(L^2)}^2
+\frac18\nu^\frac13\||D_x|^\frac13\Lambda_t^b\omega\|_{L_t^2(L^2)}^2 +\|(-\Delta)^{-\frac12}\Lambda_t^b\omega_{\neq}\|_{L_t^2(L^2)}^2 \notag\\
&&\leq 2 \|\Lambda_0^b\omega^{(0)}\|_{L^2}^2
+8\nu^{-\frac13}\||D_x|^\frac23\Lambda_t^b\theta\|_{L_t^2(L^2)}^2
+ \,C_1 \nu^{\frac13} \|\Lambda_t^b\omega\|_{L_t^\infty(L^2)}\||D_x|^\frac13\Lambda_t^b\omega\|_{L_t^2(L^2)}^2
\notag \\
&&\quad+ \,C_1 \|(-\Delta)^{-\frac12}\Lambda_t^b\omega_{\neq}\|_{L^2_tL^2}\|\nabla\Lambda_t^b\omega\|_{L^2_t(L^2)}\|\Lambda_t^b\omega\|_{L^\infty_t(L^2)},
\label{upp1}
\een
\ben
&&\|\Lambda_t^b\theta\|_{L_t^\infty(L^2)}^2+\nu\|\nabla\Lambda_t^b \theta\|_{L_t^2(L^2)}^2
+\frac14\nu^\frac13\||D_x|^\frac13\Lambda_t^b\theta\|_{L_t^2(L^2)}^2+\|(-\Delta)^{-\frac12}\Lambda_t^b\theta_{\neq}\|_{L_t^2(L^2)}^2 \notag\\
&&\leq 2 \|\Lambda_0^b\theta^{(0)}\|_{L^2}^2 + C_2\,  \|(-\Delta)^{-\frac12}\Lambda_t^b\omega_{\neq}\|_{L^2_t(L^2)} \|\nabla\Lambda_t^b\theta\|_{L^2_t(L^2)}\|\Lambda_t^b\theta\|_{L^\infty_t(L^2)}\notag\\
&&\quad +C_2 \nu^{\frac13} \|\Lambda_t^b\omega\|_{L_t^\infty(L^2)}\||D_x|^\frac13\Lambda_t^b\theta\|_{L_t^2(L^2)}^2
\notag \\
&& \quad  + C_2 \, \|\Lambda_t^b\omega\|_{L_t^\infty(L^2)} \,
\|(-\Delta)^{-\frac12}\Lambda_t^b\theta_{\neq}\|_{L^2_t(L^2)}\,
\|\nabla\Lambda_t^b\theta\|_{L^2_t(L^2)}
 \label{upp2}
\een
and
\ben
&&\||D_x|^\frac13\Lambda_t^b\theta\|_{L_t^\infty(L^2)}^2+\nu\|\nabla |D_x|^\frac13\Lambda_t^b \theta\|_{L_t^2(L^2)}^2+\frac14\nu^\frac13\||D_x|^\frac23\Lambda_t^b\theta\|_{L_t^2(L^2)}^2
\notag\\
&&\quad+\|(-\Delta)^{-\frac12}|D_x|^\frac13\Lambda_t^b\theta_{\neq}\|_{L_t^2(L^2)}^2\notag\\
&&\leq 2 \||D_x|^\frac13\Lambda_0^b\theta^{(0)}\|_{L^2}^2+C_3\nu^{\frac13} \|\Lambda_t^b\omega\|_{L_t^\infty(L^2)}\||D_x|^\frac23\Lambda_t^b\theta\|_{L_t^2(L^2)}^2 \notag\\
&&\quad+ C_3\, \|\Lambda_t^b\omega_0\|_{L^\infty_t(L^2)} \||D_x|^\frac13\Lambda_t^b\theta_{\neq}\|_{L^2_t(L^2)}^2\notag\\
&& \quad + \, C_3\, \|(-\Delta)^{-\frac12}\Lambda_t^b\omega_{\neq}\|_{L^2_t(L^2)}\, \||D_x|^\frac13\Lambda_t^b\nabla\theta\|_{L^2_t(L^2)}\||D_x|^\frac13\Lambda_t^b\theta\|_{L^\infty_t(L^2)} \notag\\
&&\quad+C_3 \||D_x|^\frac13\Lambda_t^b\omega\|_{L^2_t(L^2)}\, \|\Lambda_t^b\nabla\theta\|_{L^2_t(L^2)}\,\||D_x|^\frac13\Lambda_t^b\theta\|_{L^\infty_t(L^2)}.
\label{upp3}
\een

The {\it a priori} bounds in (\ref{upp1}), (\ref{upp2}) and (\ref{upp3}) allow us to prove Theorem \ref{Non1} through the bootstrap argument. We recall the assumptions on the initial
data $(\omega^{(0)},\theta^{(0)})$,
\ben
\|\omega^{(0)}\|_{H^b}\leq \varepsilon \nu^\beta,\qquad \|\theta^{(0)}\|_{H^b}\leq \varepsilon\nu^{\alpha},\qquad\||D_x|^\frac13\theta^{(0)}\|_{H^b}\leq \varepsilon\nu^\delta,
\label{iop}
\een
where $\varepsilon>0$ is sufficiently small and
\beq\label{index}
\beta\geq\frac12, \quad \delta\geq\beta+\frac13, \quad \alpha\geq\delta-\beta+\frac23.
\eeq
To apply the bootstrap argument, we make the ansatz that, for $T\le \infty$, the solution
of (\ref{S1eq.p}) obeys
\ben
&&\|\Lambda_t^b\omega\|_{L_T^\infty(L^2)}+\nu^\frac12\|\nabla \Lambda_t^b\omega\|_{L_T^2(L^2)}
+\nu^\frac16\| |D_x|^\frac13\Lambda_t^b\omega\|_{L_T^2(L^2)} \notag\\
&& \qquad\qquad \qquad\qquad \qquad\qquad +\|(-\Delta)^{-\frac12}\Lambda_t^b\omega_{\neq}\|_{L_T^2(L^2)}\leq C\varepsilon\nu^\beta, \label{ans1}\\
&&\|\Lambda_t^b\theta\|_{L_T^\infty(L^2)}+\nu^\frac12\|\nabla \Lambda_t^b \theta\|_{L_T^2(L^2)}
+\nu^\frac16\||D_x|^\frac13\Lambda_t^b\theta\|_{L_T^2(L^2)} \notag\\
&& \qquad\qquad\qquad\qquad \qquad\qquad  +\|(-\Delta)^{-\frac12}\Lambda_t^b\theta_{\neq}\|_{L_T^2(L^2)}\leq C\varepsilon\nu^\alpha,\label{ans2}
\\
&&\||D_x|^\frac13\Lambda_t^b\theta\|_{L_T^\infty(L^2)}+\nu^\frac12\|\nabla|D_x|^\frac13\Lambda_t^b \theta\|_{L_T^2(L^2)}+\nu^\frac16\||D_x|^\frac23\Lambda_t^b\theta\|_{L_T^2(L^2)}\notag\\
&&  \qquad\qquad\qquad\qquad \qquad\qquad  +\|(-\Delta)^{-\frac12}|D_x|^\frac13\Lambda_t^b\theta_{\neq}\|_{L_T^2(L^2)}\leq \widetilde C\varepsilon\nu^\delta.\label{ans3}
\een
We then show that (\ref{ans1}), (\ref{ans2}) and (\ref{ans3}) actually hold with $C$
replaced by $C/2$ and $\widetilde C$ by $\widetilde C/2$. In fact, if we insert
the initial condition (\ref{iop}) and the ansatz (\ref{ans1}), (\ref{ans2}) and (\ref{ans3})
in (\ref{upp1}), (\ref{upp2}) and (\ref{upp3}), we find
\beno
&& \|\Lambda_t^b\omega\|_{L_t^\infty(L^2)}^2+\nu\|\nabla \Lambda_t^b\omega\|_{L_t^2(L^2)}^2
+\frac18\nu^\frac13\||D_x|^\frac13\Lambda_t^b\omega\|_{L_t^2(L^2)}^2 +\|(-\Delta)^{-\frac12}\Lambda_t^b\omega_{\neq}\|_{L_t^2(L^2)}^2 \notag\\
&& \qquad \le 2 \varepsilon^2\nu^{2\beta}+8\widetilde C^2 \varepsilon^2\nu^{2\delta-\frac23} + C_1\,C^3\varepsilon^3(\nu^{3\beta-\frac13} + \nu^{3\beta-\frac12}),\\
&&\|\Lambda_t^b\theta\|_{L_t^\infty(L^2)}^2+\nu\|\nabla\Lambda_t^b \theta\|_{L_t^2(L^2)}^2
+\frac14\nu^\frac13\||D_x|^\frac13\Lambda_t^b\theta\|_{L_t^2(L^2)}^2+\|(-\Delta)^{-\frac12}\Lambda_t^b\theta_{\neq}\|_{L_t^2(L^2)}^2 \notag\\
&& \qquad\qquad \leq 2 \varepsilon^2\nu^{2\alpha}+C_2C^3\varepsilon^3(3\nu^{\beta+2\alpha-\frac12}+\nu^{\beta+2\alpha-\frac13}),\\
&&\||D_x|^\frac13\Lambda_t^b\theta\|_{L_t^\infty(L^2)}^2+\nu\|\nabla |D_x|^\frac13\Lambda_t^b \theta\|_{L_t^2(L^2)}^2+\frac14\nu^\frac13\||D_x|^\frac23\Lambda_t^b\theta\|_{L_t^2(L^2)}^2
\notag\\
&&\quad+\|(-\Delta)^{-\frac12}|D_x|^\frac13\Lambda_t^b\theta_{\neq}\|_{L_t^2(L^2)}^2\notag\\
&&\qquad\qquad \leq 2 \varepsilon^2\nu^{2\delta}+C_3C\widetilde C\varepsilon^3(2\widetilde C\nu^{\beta+2\delta}+ C\nu^{\beta + 2\alpha -\frac13}+ 2 C\nu^{\beta+\alpha+\delta-\frac23}).
\eeno
If we invoke \eqref{index} and  choose
$$\widetilde C\geq8,\quad C\geq32\widetilde C,\quad \varepsilon=\min\big(\frac1{128C_1C},\frac{1}{128C_2C},\frac{\widetilde C}{64C_3C}\big),$$
then the inequalities (\ref{ans1})-(\ref{ans2}) hold with $C$ replaced by $C/2$ and \eqref{ans3} holds with $\widetilde C$ replaced by $\widetilde C/2$.
This completes the proof of Theorem \ref{Non1}.
\end{proof}

\vskip .3in
\section{Proof of Theorem \ref{Non2}}
\label{nonpr2}

\vskip .1in
This section proves the nonlinear stability result stated in Theorem \ref{Non2}. We recall
that the Boussinesq system concerned here has only  vertical dissipation, namely
\begin{equation}\label{vt2}
\begin{cases}
\p_t \omega + y\p_x \omega+({\bf u}\cdot\nabla)  \omega =\nu \p_{yy} \omega+\p_x\theta ,\\
\p_t\theta+y\p_x \theta+({\bf u}\cdot\nabla) \theta=\nu\p_{yy}\theta,\\
{\bf u}=-\nabla^\perp(-\Delta )^{-1}\omega,\\
\om(x,0) = \om^{(0)}, \quad \theta(x,0) = \theta^{(0)}.
\end{cases}
\end{equation}
The proof is much more involved than the full dissipation case. The framework is still the
bootstrap argument, but it is now much more difficult to prove the desired {\it a priori}
bounds due to the lack of horizontal dissipation.
The Fourier multiplier operator is the same as that is designed for the full dissipation case,
but the nonlinear terms are now difficult to control.
Various techniques are combined to achieve suitable upper bounds. The quantities are decomposed into horizontal zeroth mode and the
non-zeroth modes to distinguish their different behaviors. Commutator estimates are employed to
shift derivatives. In addition, the frequency space is divided into different subdomains to
facilitate cancellations and derivative distribution.

\vskip .1in
\begin{proof}[Proof of Theorem \ref{Non2}]
Applying the operator $\Lambda_t^b$ to \eqref{vt2} and making use of the fact
that $\Lambda_t^b$ commutes with $\p_t + y \p_x$, we obtain
\begin{equation*}
\begin{cases}
\p_t \Lambda_t^b\omega+y\p_x \Lambda_t^b\omega-\nu \p_y^2 \Lambda_t^b\omega+\Lambda_t^b\big(({\bf u}\cdot\nabla)  \omega\big) =\p_x \Lambda_t^b\theta ,\\
\p_t\Lambda_t^b\theta+y\p_x \Lambda_t^b\theta-\nu\p_y^2\Lambda_t^b\theta+\Lambda_t^b\big(({\bf u}\cdot\nabla)  \theta\big)=0.
\end{cases}
\end{equation*}
We then take the scalar product of the equations with ${\mathcal M}\Lambda_t^b\omega$ and ${\mathcal M}\Lambda_t^b\theta$, respectively, where ${\mathcal M}$ is defined in \eqref{S2.1eq1}.
Using (\ref{idcom}), due to the fact that
${\mathcal M}$ is self-adjoint and $y\p_x$ is skew-adjoint,
\begin{equation*}
\begin{split}
&2{\rm Re} \langle y\p_x f,{\mathcal M}f\rangle_{L^2}=\langle \big[ {\mathcal M}, y\p_x\big]f,f\rangle_{L^2}=\langle(k\p_\xi{\mathcal M})(D)f,f\rangle_{L^2}.
\end{split}
\end{equation*}
Invoking this equality, we have
\begin{equation}\label{S2.2eq2}
\begin{split}
&\frac{d}{dt}\|\sqrt{\mathcal M}\Lambda_t^b\omega\|_{L^2}^2+2\nu\|D_y\sqrt{\mathcal M}\Lambda_t^b\omega\|_{L^2}^2
+\langle (k\p_\xi{\mathcal M})(D)\Lambda_t^b\omega,\Lambda_t^b\omega\rangle_{L^2}\\
&\qquad\qquad +2 \langle\Lambda_t^b\big({\bf u}\cdot\nabla\omega\big),{\mathcal M}\Lambda_t^b\omega\rangle_{L^2}
=2 \langle\p_x\Lambda_t^b\theta,{\mathcal M}\Lambda_t^b\omega\rangle_{L^2}
\end{split}
\end{equation}
and
\begin{equation}\label{S2.2eq1}
\begin{split}
\frac{d}{dt}\|\sqrt{\mathcal M}\Lambda_t^b\theta\|_{L^2}^2&+2\nu\|D_y\sqrt{\mathcal M}\Lambda_t^b \theta\|_{L^2}^2
+\langle (k\p_\xi{\mathcal M})(D)\Lambda_t^b\theta,\Lambda_t^b\theta\rangle_{L^2}\\
&\qquad\qquad\qquad
+2 \langle\Lambda_t^b\big({\bf u}\cdot\nabla\theta\big),{\mathcal M}\Lambda_t^b\theta\rangle_{L^2}=0.
\end{split}
\end{equation}
Similarly, taking the $L^2$-inner product of ${\mathcal M}|D_x|^\frac23\Lambda_t^b\theta$
with the $\theta$ equation gives
\begin{equation}\label{S2.2eq3}
\begin{split}
&\frac{d}{dt}\|\sqrt{\mathcal M}|D_x|^\frac13\Lambda_t^b\theta\|_{L^2}^2+2\nu\|D_y\sqrt{\mathcal M}|D_x|^\frac13\Lambda_t^b \theta\|_{L^2}^2\\
&+\langle|D_x|^\frac23 (k\p_\xi{\mathcal M})(D)\Lambda_t^b\theta,\Lambda_t^b\theta\rangle_{L^2}
+2\langle\Lambda_t^b\big({\bf u}\cdot\nabla\theta\big),|D_x|^\frac23{\mathcal M}\Lambda_t^b\theta\rangle_{L^2}=0.
\end{split}
\end{equation}
By the definition of ${\mathcal M}$, we have
\begin{equation*}
\begin{split}
k\p_\xi{\mathcal M}(k,\xi)=\nu^{\frac13}|k|^\frac23\varphi'\big(\nu^\frac13|k|^{-\frac13}{\rm sgn}(k)\xi\big)+\frac{1}{k^2+\xi^2},
\end{split}
\end{equation*}
for $k\neq0$, $\xi\in\R$.
Using the properties of the function $\varphi$, especially $\varphi'=\frac14$ when $\nu^\frac13|k|^{-\frac13} |\xi|\le 1$, we have, for $k\neq0$, $\xi\in\R$
\begin{equation*}
2\nu \xi^2 {\mathcal M}(k,\xi)+k\p_\xi{\mathcal M}(k,\xi)\geq\nu \xi^2+\frac14\nu^\frac13|k|^\frac23+\frac1{\xi^2+k^2}.
\end{equation*}
As a consequence,
\begin{equation}\label{S2.2eq4}
\begin{split}
2\nu\|D_y\sqrt{\mathcal M}f\|_{L^2}^2&+\langle(k\p_\xi{\mathcal M})(D)f,f\rangle_{L^2}\\
&\geq\nu\|D_y f\|_{L^2}^2+\frac14\nu^\frac13\||D_x|^\frac13f\|_{L^2}^2+\|(-\Delta)^{-\frac12}f_{\neq}\|_{L^2}^2,
\end{split}
\end{equation}
where $f_{\neq}$ is given in \eqref{proj}.
Inserting (\ref{S2.2eq4}) in \eqref{S2.2eq2}, \eqref{S2.2eq1}, \eqref{S2.2eq3} yields
\begin{equation}\label{S2.2eq5}
\begin{split}
&\frac{d}{dt}\|\sqrt{\mathcal M}\Lambda_t^b\omega\|_{L^2}^2+\nu\|D_y\Lambda_t^b\omega\|_{L^2}^2
+\frac14\nu^\frac13\| |D_x|^\frac13\Lambda_t^b\omega\|_{L^2}^2+\|(-\Delta)^{-\frac12}\Lambda_t^b\omega_{\neq}\|_{L^2}^2\\
&\qquad\qquad\qquad \leq 2 \underbrace{\langle\p_x\Lambda_t^b\theta,{\mathcal M}\Lambda_t^b\omega\rangle_{L^2}}_{={ I_1}}
-2 \underbrace{\langle\Lambda_t^b\big({\bf u}\cdot\nabla\omega\big),{\mathcal M}\Lambda_t^b\omega\rangle_{L^2}}_{={ I_2}},
\end{split}
\end{equation}
\begin{equation}\label{S2.2eq6}
\begin{split}
\frac{d}{dt}\|\sqrt{\mathcal M}\Lambda_t^b\theta\|_{L^2}^2&+\nu\|D_y\Lambda_t^b \theta\|_{L^2}^2
+\frac14\nu^\frac13\||D_x|^\frac13\Lambda_t^b\theta\|_{L^2}^2+\|(-\Delta)^{-\frac12}\Lambda_t^b\theta_{\neq}\|_{L^2}^2\\
&\qquad\qquad\qquad\qquad\qquad
\leq -2\underbrace{\langle\Lambda_t^b\big({\bf u}\cdot\nabla\theta\big),{\mathcal M}\Lambda_t^b\theta\rangle_{L^2}}_{={ I_3}}
\end{split}
\end{equation}
and
\begin{equation}\label{S2.2eq7}
\begin{split}
&\frac{d}{dt}\|\sqrt{\mathcal M}|D_x|^\frac13\Lambda_t^b\theta\|_{L^2}^2+\nu\|D_y|D_x|^\frac13\Lambda_t^b \theta\|_{L^2}^2+\frac14\nu^\frac13\||D_x|^\frac23\Lambda_t^b\theta\|_{L^2}^2\\
&\qquad+\|(-\Delta)^{-\frac12}|D_x|^\frac13\Lambda_t^b\theta_{\neq}\|_{L^2}^2
\leq -2 \underbrace{\langle\Lambda_t^b\big({\bf u}\cdot\nabla\theta\big),|D_x|^\frac23{\mathcal M}\Lambda_t^b\theta\rangle_{L^2}}_{={I_4}}.
\end{split}
\end{equation}
The term ${ I_1}$ is easy to deal with, using the $L^2$-boundedness of ${\mathcal M}$, we have
\begin{equation}\label{S2.2eq8}
\begin{split}
|{I_1}|=|\langle\p_x\Lambda_t^b\theta,{\mathcal M}\Lambda_t^b\omega\rangle_{L^2}|
&\leq \||D_x|^\frac23\Lambda_t^b\theta\|_{L^2}\||D_x|^\frac13\Lambda_t^b\omega\|_{L^2}\\
&\leq\frac1{16}\nu^\frac13\||D_x|^\frac13\Lambda_t^b\omega\|_{L^2}^2+8\nu^{-\frac13}\||D_x|^\frac23\Lambda_t^b\theta\|_{L^2}^2.
\end{split}
\end{equation}
{\bf Estimates for $I_2$ and $I_3$}.
The terms $I_2$ and $I_3$ have the same structure so that we only estimate $I_3$.
Recall that the velocity field ${\bf u}$ is given by the Biot-Savart law
\begin{equation*}
{\bf u}=-\nabla^\perp(-\Delta)^{-1}\omega=\begin{pmatrix}\p_y(-\Delta)^{-1}\omega\\ -\p_x(-\Delta)^{-1}\omega \end{pmatrix}=:\begin{pmatrix}u\\ v \end{pmatrix}.
\end{equation*}
According to \eqref{proj}, ${\bf u}$ can be decomposed into ${\bf u}_0$ and ${\bf u}_{\neq}$,
\begin{equation}\label{S2.3eq1}
\begin{split}
{\bf u}_0&={\mathbb P}_0{\bf u}=\begin{pmatrix}u_{0}\\0\end{pmatrix},\quad\text{with } u_0=\p_y(-\p_y^2)^{-1}\omega_0,\\
{\bf u}_{\neq}&={\mathbb P}_{\neq}{\bf u}=-\nabla^\perp(-\Delta)^{-1}\omega_{\neq}=\begin{pmatrix}\p_y(-\Delta)^{-1}\omega_{\neq}\\ -\p_x(-\Delta)^{-1}\omega_{\neq} \end{pmatrix}=\begin{pmatrix}u_{\neq}\\ v_{\neq} \end{pmatrix}.
\end{split}
\end{equation}
Thus,
$${\bf u}\cdot\nabla\theta=u_0\p_x\theta+\p_y(-\Delta)^{-1}\omega_{\neq}\p_x\theta-\p_x(-\Delta)^{-1}\omega_{\neq}\p_y\theta.$$
Then we can write
\begin{equation*}\label{S2.3eq2}
\begin{split}
I_3&=\langle\Lambda_t^b\big({\bf u}\cdot\nabla\theta\big),{\mathcal M}\Lambda_t^b\theta\rangle_{L^2}=I_{31}+I_{32}+I_{33},\quad\text{with}\\
I_{31}&:=-\langle\Lambda_t^b\big(\p_x(-\Delta)^{-1}\omega_{\neq}\p_y\theta\big),{\mathcal M}\Lambda_t^b\theta\rangle_{L^2},\\
I_{32}&:=\langle\Lambda_t^b\big({ u}_0\p_x\theta\big),{\mathcal M}\Lambda_t^b\theta\rangle_{L^2},\\
I_{33}&:=\langle\Lambda_t^b\big(\p_y(-\Delta)^{-1}\omega_{\neq}\p_x\theta\big),{\mathcal M}\Lambda_t^b\theta\rangle_{L^2}.
\end{split}
\end{equation*}
For the term $I_{31}$, we have
\begin{equation}\label{S2.3eq3}
\begin{split}
I_{31}&\leq\|\Lambda_t^b\big(\p_x(-\Delta)^{-1}\omega_{\neq}\p_y\theta\big)\|_{L^2}\|\Lambda_t^b\theta\|_{L^2}\\
&\leq \|(-\Delta)^{-\frac12}\Lambda_t^b\omega_{\neq}\|_{L^2}\|D_y\Lambda_t^b\theta\|_{L^2}\|\Lambda_t^b\theta\|_{L^2}.
\end{split}
\end{equation}
The estimates for $I_{32}$ and $I_{33}$ are much more elaborate since we only have $\frac13$-derivative enhanced dissipation in the $x$-direction, which is not enough to control $\p_x\theta$ directly. To simplify the notation, we set
$$
{\mathcal M}_t^b(k,\xi):=\sqrt{\mathcal M(k,\xi)}\, \Lambda_t^b(k,\xi).
$$
By \eqref{proj}, we write $\theta =\theta_0 + \theta_{\not =}$.  Since $\theta_0$ is independent of $x$, we have $\p_x\theta_0=0$ and the cancellations
$$\langle{\mathcal M}_t^b( u_0\p_x\theta_{\neq}),{\mathcal M}_t^b\theta_0\rangle_{L^2}=0,\qquad
\langle u_0\p_x({\mathcal M}_t^b\theta_{\neq}),{\mathcal M}_t^b\theta_{\neq}\rangle_{L^2}=0.$$
Therefore,
\begin{equation*}
\begin{split}
I_{32}&=\langle{\mathcal M}_t^b( u_0\p_x\theta_{\neq}),{\mathcal M}_t^b\theta_{\neq}\rangle_{L^2}\\
&=\langle{\mathcal M}_t^b( u_0\p_x\theta_{\neq})-u_0\p_x({\mathcal M}_t^b\theta_{\neq}),{\mathcal M}_t^b\theta_{\neq}\rangle_{L^2}.
\end{split}
\end{equation*}
Using Plancherel's theorem, we have
\begin{equation*}
\begin{split}
I_{32}
=\sum_{k\neq0}\iint \big({\mathcal M}_t^b(k,\xi)-{\mathcal M}_t^b(k,\xi-\eta)\big)\widehat u(0,\eta)ik\widehat\theta_{\neq}(k,\xi-\eta)
{\mathcal M}_t^b(k,\xi)\overline{\widehat \theta_{\neq}(k,\xi)}d\xi d\eta\\
=-\sum_{k\neq0}\iint \big({\mathcal M}_t^b(k,\xi)-{\mathcal M}_t^b(k,\xi-\eta)\big)\frac{1}{\eta}\widehat \omega(0,\eta)k\widehat\theta_{\neq}(k,\xi-\eta)
{\mathcal M}_t^b(k,\xi)\overline{\widehat \theta_{\neq}(k,\xi)}d\xi d\eta,
\end{split}
\end{equation*}
where we used $\widehat u(0,\eta)=i\eta^{-1}\widehat \omega(0,\eta)$ by \eqref{S2.3eq1}.
By Taylor's formula we have, for $k\neq0$,
$$|{\mathcal M}_t^b(k,\xi)-{\mathcal M}_t^b(k,\xi-\eta)|\leq \int_0^1|\p_\xi{\mathcal M}_t^b(k,\xi-s\eta)||\eta|ds.$$
Using the explicit expression of ${\mathcal M}_t^b$ we can show that
\begin{equation}\label{S2.3eq4}
\begin{split}
|\p_\xi{\mathcal M}_t^b(k,\xi)|&\leq C\big(\nu^\frac13|k|^{-\frac13}+\frac{1}{|k|}\big)\Lambda_t^b(k,\xi).
\end{split}
\end{equation}
Therefore, by Young's convolution inequality, we get
\begin{equation}\label{S2.3eq5}
\begin{split}
|I_{32}&|\leq\sum_{k\neq0}C(\nu^\frac13|k|^{\frac23}+1)\iint \big(\Lambda_t^b(k,\xi-\eta)+\Lambda_t^b(0,\eta)\big)
|\widehat \omega(0,\eta)|\\
&\qquad\qquad\qquad\qquad\quad\qquad\qquad\quad\times |\widehat\theta_{\neq}(k,\xi-\eta) |\Lambda_t^b(k,\xi)|\widehat \theta_{\neq}(k,\xi)|d\xi d\eta\\
&\leq C\big(\nu^\frac13\|\widehat\omega_0\|_{L^1}\||D_x|^\frac13\Lambda_t^b\theta_{\neq}\|_{L^2}^2+\nu^\frac13\|\Lambda_t^b\omega_0\|_{L^2}\|\widehat{|D_x|^\frac13\theta_{\neq}}\|_{L^1}\||D_x|^\frac13\Lambda_t^b\theta_{\neq}\|_{L^2}\\
&\qquad\qquad\qquad\qquad\qquad+\|\widehat\omega_0\|_{L^1}\|\Lambda_t^b\theta_{\neq}\|_{L^2}^2+\|\Lambda_t^b\omega_0\|_{L^2}\|\widehat\theta_{\neq}\|_{L^1}\|\Lambda_t^b\theta_{\neq}\|_{L^2}\big)\\
&\leq C\nu^\frac13\|\Lambda_t^b\omega_0\|_{L^2}\||D_x|^\frac13\Lambda_t^b\theta\|_{L^2}^2
+C\|\Lambda_t^b\omega_0\|_{L^2}\|\Lambda_t^b\theta_{\neq}\|_{L^2}^2.
\end{split}
\end{equation}
Due to ${\rm div}\ {\bf u}_{\neq}=0$, we have the cancellation
$$\langle {\bf u}_{\neq}\cdot\nabla({\mathcal M}_t^b\theta),{\mathcal M}_t^b\theta\rangle_{L^2}=0$$
and we can rewrite
$$I_{33}=\underbrace{\langle{\mathcal M}_t^b(u_{\neq}\p_x\theta)-u_{\neq}\p_x({\mathcal M}_t^b\theta),{\mathcal M}_t^b\theta\rangle_{L^2}}_{=:J}-\underbrace{\langle v_{\neq}\p_y({\mathcal M}_t^b\theta),{\mathcal M}_t^b\theta\rangle_{L^2}}_{=:J'}.$$
The term $J'$ is easy to control
\begin{equation}\label{S2.3eq6}
\begin{split}
|J'|&\leq\|v_{\neq}\|_{L^\infty}\|D_y{\mathcal M}_t^b\theta\|_{L^2}\|{\mathcal M}_t^b\theta\|_{L^2}\\
&\leq\|(-\Delta)^{-\frac12}\Lambda_t^b\omega_{\neq}\|_{L^2}\|D_y\Lambda_t^b\theta\|_{L^2}\|\Lambda_t^b\theta\|_{L^2}.
\end{split}
\end{equation}
It remains to estimate the term $J$. Noticing that $\p_x\theta_0=\p_x({\mathcal M}_t^b\theta_0)=0$, we can write
\begin{equation*}
\begin{split}
J&=\langle{\mathcal M}_t^b(u_{\neq}\p_x\theta_{\neq})-u_{\neq}\p_x({\mathcal M}_t^b\theta_{\neq}),{\mathcal M}_t^b\theta\rangle_{L^2}=J_1+J_2\\
\text{with}\quad J_1&:=\langle{\mathcal M}_t^b(u_{\neq}\p_x\theta_{\neq})-u_{\neq}\p_x({\mathcal M}_t^b\theta_{\neq}),{\mathcal M}_t^b\theta_{\neq}\rangle_{L^2},\\
J_2&:=\langle{\mathcal M}_t^b(u_{\neq}\p_x\theta_{\neq})-u_{\neq}\p_x({\mathcal M}_t^b\theta_{\neq}),{\mathcal M}_t^b\theta_0\rangle_{L^2}.
\end{split}
\end{equation*}
By Plancherel's theorem,
\beno
J_1&=&\sum_{k,l}\iint\big({\mathcal M}_t^b(k,\xi)-{\mathcal M}_t^b(k-l,\xi-\eta)\big)\\
&& \qquad\qquad \quad \widehat u_{\neq}(l,\eta)\cdot i(k-l)\widehat\theta_{\neq}(k-l,\xi-\eta)
\cdot{\mathcal M}_t^b(k,\xi)\overline{\widehat \theta_{\neq}(k,\xi)}d\xi d\eta\\
&=&-\sum_{k\neq0,l\neq0\atop k-l\neq0}\iint\big({\mathcal M}_t^b(k,\xi)-{\mathcal M}_t^b(k-l,\xi-\eta)\big)\\
&& \qquad\qquad \quad \frac{\eta(k-l)}{l^2+\eta^2}\widehat \omega_{\neq}(l,\eta) \widehat\theta_{\neq}(k-l,\xi-\eta)\cdot{\mathcal M}_t^b(k,\xi)\overline{\widehat \theta_{\neq}(k,\xi)}d\xi d\eta,
\eeno
where in the last equality we used $\widehat u_{\neq}(l,\eta)=i\eta(l^2+\eta^2)^{-1}\widehat \omega_{\neq}(l,\eta)$ by \eqref{S2.3eq1}.
In order to estimate $J_1$, the idea is to use Taylor's formula for ${\mathcal M}_t^b(k,\xi)-{\mathcal M}_t^b(k-l,\xi-\eta)$ as in the estimates of $I_{32}$. However,  ${\mathcal M}(k,\xi)$ and ${\mathcal M}_t^b(k,\xi)$ are not smooth at $k=0$. We then have to divide into four different cases:
\ben
&& A_1=\{k>0,k-l>0\},\quad
A_2=\{k<0,k-l<0\}, \notag\\
&& A_3=\{k>0,k-l<0\},\quad
A_4=\{k<0,k-l>0\}\label{S2.3eqA}
\een
and denote by
\beno
J_{1i}
&:=&-\sum_{(k,l)\in A_i}\iint\big({\mathcal M}_t^b(k,\xi)-{\mathcal M}_t^b(k-l,\xi-\eta)\big)\\
&& \qquad\qquad \qquad \frac{\eta(k-l)}{l^2+\eta^2}\widehat \omega_{\neq}(l,\eta) \widehat\theta_{\neq}(k-l,\xi-\eta) \cdot{\mathcal M}_t^b(k,\xi)\overline{\widehat \theta_{\neq}(k,\xi)}d\xi d\eta.
\eeno
We first estimate  $J_{11}$ and $J_{12}$.  When $k>0, k-l>0$, we use Taylor's formula,
\beno
|{\mathcal M}_t^b(k,\xi)-{\mathcal M}_t^b(k-l,\xi-\eta)| &\leq& \int_0^1|\p_\xi{\mathcal M}_t^b(k-sl,\xi-s\eta)||\eta|ds\\
&&+\int_0^1|\p_k{\mathcal M}_t^b(k-sl,\xi-s\eta)||l|ds.
\eeno
A direct computation gives
$$|\p_k\Lambda_t^b(k,\xi)|\leq C\Lambda_t^{b-2}(k,\xi)(|k|+|\xi+kt||t|),\qquad |t|\leq \frac{1}{|k|}\big(|\xi|+\Lambda_t(k,\xi)\big),$$
which implies
\begin{equation*}
|\p_k{\mathcal M}_t^b(k,\xi)|\leq \big(\frac{1}{k}+\frac{|\xi|}{k^2}\big)\Lambda_t^{b}(k,\xi)\quad\text{for }k>0.
\end{equation*}
Together with \eqref{S2.3eq4}, we obtain
\beno
&&|{\mathcal M}_t^b(k,\xi)-{\mathcal M}_t^b(k-l,\xi-\eta)|\\
&&\leq \int_0^1\big(\frac{\nu^\frac13|\eta|}{(k-sl)^{\frac13}}+\frac{|\eta|+|l|}{k-sl}+\frac{|\xi-s\eta||l|}{(k-sl)^2}\big)\Lambda_t^b(k-sl,\xi-s\eta)ds\\
&&\leq \big(\frac{\nu^\frac13|\eta|}{\min(k-l,k)^{\frac13}}+\frac{|\eta|+|l|}{\min(k-l,k)}+\frac{(|\xi|+|\xi-\eta|)|l|}{(k-l)k}\big)\big(\Lambda_t^b(k-l,\xi-\eta)+\Lambda_t^b(l,\eta)\big).
\eeno
Therefore, by the convolution inequality,
\beno
&& |J_{11}^{(1)}|:= \Big|\sum_{(k,l)\in A_1\atop l>0}\iint\big({\mathcal M}_t^b(k,\xi)-{\mathcal M}_t^b(k-l,\xi-\eta)\big)\\
&&\qquad\qquad\qquad \qquad  \frac{\eta(k-l)}{l^2+\eta^2}\widehat \omega_{\neq}(l,\eta) \widehat\theta_{\neq}(k-l,\xi-\eta)\cdot{\mathcal M}_t^b(k,\xi)\overline{\widehat \theta_{\neq}(k,\xi)}d\xi d\eta\Big|\\
&\leq& \sum_{(k,l)\in A_1\atop l>0}\iint\Big(\frac{\nu^\frac13|\eta|}{(k-l)^{\frac13}}+\frac{|\eta|+|l|}{k-l}+\frac{(|\xi|+|\xi-\eta|)|l|}{(k-l)k}\Big)\big(\Lambda_t^b(k-l,\xi-\eta)+\Lambda_t^b(l,\eta)\big)\\
&&\qquad\qquad\quad\qquad\cdot\frac{|\eta|(k-l)}{l^2+\eta^2}|\widehat \omega_{\neq}(l,\eta)\widehat\theta_{\neq}(k-l,\xi-\eta){\Lambda}_t^b(k,\xi)\widehat \theta_{\neq}(k,\xi)|d\xi d\eta\\
&\leq& \sum_{(k,l)\in A_1\atop l>0}\iint\big(\nu^\frac13(k-l)^{\frac23}+1+\frac{|\xi|+|\xi-\eta|}{(l^2+\eta^2)^{\frac12}}\big)\big(\Lambda_t^b(k-l,\xi-\eta)+\Lambda_t^b(l,\eta)\big)\\
&&\qquad\qquad\qquad\qquad\cdot|\widehat \omega_{\neq}(l,\eta)\widehat\theta_{\neq}(k-l,\xi-\eta){\Lambda}_t^b(k,\xi)\widehat \theta_{\neq}(k,\xi)|d\xi d\eta\\
&\leq& \|\widehat\omega_{\neq}\|_{L^1}\big(\nu^\frac13\||D_x|^\frac13\Lambda_t^b\theta_{\neq}\|_{L^2}^2+\|\Lambda_t^b\theta_{\neq}\|_{L^2}^2\big)\\
&& \qquad +\|\widehat{(-\Delta)^{-\frac12}\omega_{\neq}}\|_{L^1}\|\Lambda_t^b\theta_{\neq}\|_{L^2}\|D_y\Lambda_t^b\theta_{\neq}\|_{L^2}\\
&&\qquad+\|\Lambda_t^b\omega_{\neq}\|_{L^2}\big(\nu^\frac13\|\widehat{{|D_x|^\frac13}\theta_{\neq}}\|_{L^1}\||D_x|^\frac13\Lambda_t^b\theta_{\neq}\|_{L^2}+\|\widehat{\theta}_{\neq}\|_{L^1}\|\Lambda_t^b\theta_{\neq}\|_{L^2}\big)\\
&&\qquad+\|(-\Delta)^{-\frac12}\Lambda_t^b\omega_{\neq}\|_{L^1}\big(\|\widehat\theta_{\neq}\|_{L^1}\|D_y\Lambda_t^b\theta_{\neq}\|_{L^2}+\|\widehat{D_y\theta_{\neq}}\|_{L^1}\|\Lambda_t^b\theta_{\neq}\|_{L^2}\big)\\
&\leq&  \|\Lambda_t^b\omega_{\neq}\|_{L^2}\||D_x|^\frac13\Lambda_t^b\theta_{\neq}\|_{L^2}^2
+\|(-\Delta)^{-\frac12}\Lambda_t^b\omega_{\neq}\|_{L^2}\|\Lambda_t^b\theta_{\neq}\|_{L^2}\|D_y\Lambda_t^b\theta_{\neq}\|_{L^2},
\eeno
where we have used that $(k-l)^\frac23\leq(k-l)^\frac13k^\frac13$ for $k>0,k-l>0,l>0$.
On the other hand, when $k\geq1,l<0$, we have the inequalities
\beno
&&\frac{k-l}{k^\frac13}\leq \min\big((k-l)^\frac13k^\frac13+(k-l)^\frac13|l|^{\frac23},2(k-l)^{\frac23}|l|^\frac13\big), \\
&& \frac{k-l}{k}\leq 2\min\big((k-l)^{\frac13}|l|^{\frac23},(k-l)^\frac23|l|^\frac13\big).
\eeno
$J_{11}^{(2)}$ can be estimated as follows,
\beno
&&|J_{11}^{(2)}|
:=\Big|\sum_{(k,l)\in A_1\atop l<0}\iint\big({\mathcal M}_t^b(k,\xi)-{\mathcal M}_t^b(k-l,\xi-\eta)\big)\\
&&\qquad \qquad \qquad \frac{\eta(k-l)}{l^2+\eta^2}\widehat \omega_{\neq}(l,\eta) \widehat\theta_{\neq}(k-l,\xi-\eta)\cdot{\mathcal M}_t^b(k,\xi)\overline{\widehat \theta_{\neq}(k,\xi)}d\xi d\eta\Big|\\
&\leq& \sum_{(k,l)\in A_1\atop l<0}\iint\Big(\frac{\nu^\frac13|\eta|}{k^{\frac13}}+\frac{|\eta|+|l|}{k}+\frac{(|\xi|+|\xi-\eta|)|l|}{(k-l)k}\Big)\big(\Lambda_t^b(k-l,\xi-\eta)+\Lambda_t^b(l,\eta)\big)\\
&&\qquad\qquad\qquad\cdot\frac{|\eta|(k-l)}{l^2+\eta^2}|\widehat \omega_{\neq}(l,\eta)\widehat\theta_{\neq}(k-l,\xi-\eta){\Lambda}_t^b(k,\xi)\widehat \theta_{\neq}(k,\xi)|d\xi d\eta\\
&\leq& \sum_{(k,l)\in A_1\atop l<0}\iint\Big(\big(\nu^\frac13(k-l)^\frac13k^{\frac13}+|\xi|+|\xi-\eta|\big)\big(\Lambda_t^b(k-l,\xi-\eta)+\Lambda_t^b(l,\eta)\big)\\
&&\qquad\qquad\quad+(k-l)^\frac13|l|^\frac23\Lambda_t^{b}(k-l,\xi-\eta)+(k-l)^\frac23|l|^\frac13\Lambda_t^{b}(l,\eta)\Big)\\
&&\qquad\qquad\qquad\qquad\qquad\cdot|\widehat \omega_{\neq}(l,\eta)\widehat\theta_{\neq}(k-l,\xi-\eta){\Lambda}_t^b(k,\xi)\widehat \theta_{\neq}(k,\xi)|d\xi d\eta\\
&\leq& \nu^\frac13\|\widehat\omega_{\neq}\|_{L^1}\||D_x|^\frac13\Lambda_t^b\theta_{\neq}\|_{L^2}^2
+\|\widehat{|D_x|^\frac23\omega_{\neq}}\|_{L^1}\||D_x|^\frac13\Lambda_t^b\theta_{\neq}\|_{L^2}\|\Lambda_t^b\theta_{\neq}\|_{L^2}\\
&&\qquad+\|\widehat\omega_{\neq}\|_{L^1}\|\Lambda_t^b\theta_{\neq}\|_{L^2}\|D_y\Lambda_t^b\theta_{\neq}\|_{L^2}
+\nu^\frac13\|\Lambda_t^b\omega_{\neq}\|_{L^2}\|\widehat{|D_x|^\frac13\theta_{\neq}}\|_{L^2}\||D_x|^\frac13\Lambda_t^b\theta_{\neq}\|_{L^2}\\
&&\qquad+\||D_x|^\frac13\Lambda_t^b\omega_{\neq}\|_{L^2}\|\widehat{|D_x|^\frac23\theta_{\neq}}\|_{L^1}\|\Lambda_t^b\theta_{\neq}\|_{L^2}\\
&&\qquad+\|\Lambda_t^b\omega_{\neq}\|_{L^2}\big(\|\widehat\theta_{\neq}\|_{L^1}\|D_y\Lambda_t^b\theta_{\neq}\|_{L^2}+\|\widehat{D_y\theta_{\neq}}\|_{L^1}\|\Lambda_t^b\theta_{\neq}\|_{L^2}\big)\\
&\leq& \|\Lambda_t^b\omega_{\neq}\|_{L^2}\big(\nu^\frac13\||D_x|^\frac13\Lambda_t^b\theta_{\neq}\|_{L^2}^2+\|\Lambda_t^b\theta_{\neq}\|_{L^2}\|D_y\Lambda_t^b\theta_{\neq}\|_{L^2}\big)\\
&&\qquad+\||D_x|^\frac13\Lambda_t^b\omega_{\neq}\|_{L^2}\||D_x|^\frac13\Lambda_t^b\theta_{\neq}\|_{L^2}\|\Lambda_t^b\theta_{\neq}\|_{L^2},
\eeno
where we have used
$$
\|\widehat{|D_x|^\frac23\omega_{\neq}}\|_{L^1}\leq \||D_x|^\frac13\Lambda_t^b\omega_{\neq}\|_{L^2},\qquad\text{provided that }b>\frac43.
$$
Combining the bounds for $J_{11}^{(1)}$ and $J_{11}^{(2)}$ yields
\begin{equation*}
\begin{split}
|J_{11}|&\leq  \|\Lambda_t^b\omega_{\neq}\|_{L^2}\big(\||D_x|^\frac13\Lambda_t^b\theta_{\neq}\|_{L^2}^2+\|\Lambda_t^b\theta_{\neq}\|_{L^2}\|D_y\Lambda_t^b\theta_{\neq}\|_{L^2}\big)\\
&\quad +\||D_x|^\frac13\Lambda_t^b\omega_{\neq}\|_{L^2}\||D_x|^\frac13\Lambda_t^b\theta_{\neq}\|_{L^2}\|\Lambda_t^b\theta_{\neq}\|_{L^2}.
\end{split}
\end{equation*}
The term $J_{12}$ can be treated in the same way. To estimate  $J_{13}$ and $J_{14}$, we notice that, when $k>0, k-l<0$ or $k<0,k-l>0$, we have $|k-l|<|l|$ and thus
\begin{equation*}
\begin{split}
|J_{13}+J_{14}|
&\leq\sum_{(k,l)\in A_3\cup A_4}\iint\big(\Lambda_t^b(k-l,\xi-\eta)+\Lambda_t^b(l,\eta)\big)\frac{|\eta||k-l|}{l^2+\eta^2}|\widehat \omega_{\neq}(l,\eta)| \\
&\qquad\qquad\qquad\qquad\qquad\quad\cdot|\widehat\theta_{\neq}(k-l,\xi-\eta)\Lambda_t^b(k,\xi)\widehat \theta_{\neq}(k,\xi)|d\xi d\eta\\
&\leq \|\widehat\omega_{\neq}\|_{L^1}\|\Lambda_t^b\theta_{\neq}\|_{L^2}^2+\|\widehat\theta_{\neq}\|_{L^1}\|\Lambda_t^b\omega_{\neq}\|_{L^2}\|\Lambda_t^b\theta_{\neq}\|_{L^2}\\
&\leq \|\Lambda_t^b\omega_{\neq}\|_{L^2}\|\Lambda_t^b\theta_{\neq}\|_{L^2}^2.
\end{split}
\end{equation*}
This finishes the estimate for $J_1$,
\begin{equation*}\label{S2.3eq7}
\begin{split}
|J_1|&\leq  \|\Lambda_t^b\omega_{\neq}\|_{L^2}\big(\||D_x|^\frac13\Lambda_t^b\theta_{\neq}\|_{L^2}^2+\|\Lambda_t^b\theta_{\neq}\|_{L^2}\|D_y\Lambda_t^b\theta_{\neq}\|_{L^2}\big)\\
&\qquad+\||D_x|^\frac13\Lambda_t^b\omega_{\neq}\|_{L^2}\||D_x|^\frac13\Lambda_t^b\theta_{\neq}\|_{L^2}\|\Lambda_t^b\theta_{\neq}\|_{L^2}.
\end{split}
\end{equation*}
To estimate  $J_2$, we apply Plancherel's theorem and \eqref{S2.3eq1} to write
\begin{equation*}
\begin{split}
|J_2|&=\Big|\sum_{l\neq0}\iint\big({\mathcal M}_t^b(0,\xi)-{\mathcal M}_t^b(-l,\xi-\eta)\big)\widehat u_{\neq}(l,\eta) i(-l)\widehat\theta_{\neq}(-l,\xi-\eta)\overline{{\mathcal M}_t^b\widehat \theta(0,\xi)}d\xi d\eta\Big|\\
&\leq\sum_{l\neq0}\iint\big(\Lambda_t^b(-l,\xi-\eta)+\Lambda_t^b(l,\eta)\big)\frac{|l||\eta|}{l^2+\eta^2}|\widehat\omega_{\neq}(l,\eta)\widehat\theta_{\neq}(-l,\xi-\eta)\Lambda_t^b\widehat \theta(0,\xi)|d\xi d\eta\\
&\leq \|\widehat\omega_{\neq}\|_{L^1}\|\Lambda_t^b\theta_{\neq}\|_{L^2}\|\Lambda_t^b\theta_0\|_{L^2}+\|\widehat\theta_{\neq}\|_{L^1}\|\Lambda_t^b\omega_{\neq}\|_{L^2}\|\Lambda_t^b\theta_{0}\|_{L^2}\\
&\leq \|\Lambda_t^b\omega_{\neq}\|_{L^2}\|\Lambda_t^b\theta_{\neq}\|_{L^2}\|\Lambda_t^b\theta_0\|_{L^2}.
\end{split}
\end{equation*}
Combining the bounds for $J_1$ and $J_2$, we obtain
\begin{equation*}
\begin{split}
|J|&\leq  \|\Lambda_t^b\omega_{\neq}\|_{L^2}\big(\||D_x|^\frac13\Lambda_t^b\theta_{\neq}\|_{L^2}^2+\|\Lambda_t^b\theta_{\neq}\|_{L^2}\|D_y\Lambda_t^b\theta_{\neq}\|_{L^2}+\|\Lambda_t^b\theta_{\neq}\|_{L^2}\|\Lambda_t^b\theta_0\|_{L^2}\big)\\
&\qquad+\||D_x|^\frac13\Lambda_t^b\omega_{\neq}\|_{L^2}\||D_x|^\frac13\Lambda_t^b\theta_{\neq}\|_{L^2}\|\Lambda_t^b\theta_{\neq}\|_{L^2}.
\end{split}
\end{equation*}
Together with \eqref{S2.3eq6}, we finish the estimates for $I_{33}$:
\begin{equation}\label{S2.3eq8}
\begin{split}
|I_{33}| &\leq \|\Lambda_t^b\omega_{\neq}\|_{L^2}\||D_x|^\frac13\Lambda_t^b\theta\|_{L^2}^2+\||D_x|^\frac13\Lambda_t^b\omega\|_{L^2}\||D_x|^\frac13\Lambda_t^b\theta\|_{L^2}\|\Lambda_t^b\theta_{\neq}\|_{L^2}\\
&\qquad
+\|\Lambda_t^b\omega_{\neq}\|_{L^2}\|\Lambda_t^b\theta\|_{L^2}\big(\|D_y\Lambda_t^b\theta\|_{L^2}+\|\Lambda_t^b\theta_{\neq}\|_{L^2}\big).
\end{split}
\end{equation}
It follows from \eqref{S2.3eq3}, \eqref{S2.3eq5} and \eqref{S2.3eq8} that
\begin{equation}\label{S2.3eq9}
\begin{split}
|I_{3}|
&\leq \|\Lambda_t^b\omega\|_{L^2}\||D_x|^\frac13\Lambda_t^b\theta\|_{L^2}^2+\||D_x|^\frac13\Lambda_t^b\omega\|_{L^2}\||D_x|^\frac13\Lambda_t^b\theta\|_{L^2}\|\Lambda_t^b\theta_{\neq}\|_{L^2}\\
&\qquad
+\|\Lambda_t^b\omega_{\neq}\|_{L^2}\|\Lambda_t^b\theta\|_{L^2}\big(\|D_y\Lambda_t^b\theta\|_{L^2}+\|\Lambda_t^b\theta_{\neq}\|_{L^2}\big).
\end{split}
\end{equation}
Similarly, the upper bound for $I_2$ is given by
\begin{equation}\label{S2.3eq10}
\begin{split}
|I_{2}|
&\leq   \|\Lambda_t^b\omega\|_{L^2}\||D_x|^\frac13\Lambda_t^b\omega\|_{L^2}^2+\|\Lambda_t^b\omega\|_{L^2}\|\Lambda_t^b\omega_{\neq}\|_{L^2}\|D_y\Lambda_t^b\omega\|_{L^2}.
\end{split}
\end{equation}
{\bf Estimates for $I_4$}. \quad As in the estimates of $I_3$, we decompose the term $I_4$ as
\begin{equation*}
\begin{split}
I_4&=I_{41}+I_{42}+I_{43} \quad\text{with}\\
I_{41}&:=\langle\Lambda_t^b\big(v_{\neq}\p_y\theta\big),|D_x|^\frac23{\mathcal M}\Lambda_t^b\theta\rangle_{L^2},\\
I_{42}&:=\langle\Lambda_t^b\big(u_0\p_x\theta\big),|D_x|^\frac23{\mathcal M}\Lambda_t^b\theta\rangle_{L^2},\\
I_{43}&:=\langle\Lambda_t^b\big(u_{\neq}\p_x\theta\big),|D_x|^\frac23{\mathcal M}\Lambda_t^b\theta\rangle_{L^2}.
\end{split}
\end{equation*}
By Lemma \ref{Lam},
\begin{equation}\label{S2.4eq1}
\begin{split}
&I_{41}\leq \||D_x|^\frac13\Lambda_t^b(v_{\neq}\p_y\theta)\|_{L^2}\||D_x|^\frac13\Lambda_t^b\theta\|_{L^2}\\
&\leq \big(\||D_x|^\frac13\Lambda_t^bv_{\neq}\|_{L^2}\|D_y\Lambda_t^b\theta\|_{L^2}+\|\Lambda_t^bv_{\neq}\|_{L^2}\||D_x|^\frac13D_y\Lambda_t^b\theta\|_{L^2}\big)\||D_x|^\frac13\Lambda_t^b\theta\|_{L^2}.
\end{split}
\end{equation}
Setting ${\mathcal N}_t^b(k,\xi):=|k|^\frac13{\mathcal M}_t^b(k,\xi)$, we can write
\begin{equation*}
\begin{split}
I_{42}&=\langle{\mathcal N}_t^b(u_0\p_x\theta_{\neq})-u_0\p_x{\mathcal N}_t^b\theta_{\neq},{\mathcal N}_t^b\theta_{\neq}\rangle_{L^2}.
\end{split}
\end{equation*}
The estimates for $I_{42}$ are similar to those for $I_{32}$,
\begin{equation}\label{S2.4eq2}
|I_{42}|\leq \nu^\frac13\|\Lambda_t^b\omega_0\|_{L^2}\||D_x|^\frac23\Lambda_t^b\theta_{\neq}\|_{L^2}^2+\|\Lambda_t^b\omega_0\|_{L^2}\||D_x|^\frac13\Lambda_t^b\theta_{\neq}\|_{L^2}^2.
\end{equation}
In order to estimate the term $I_{43}$, we decompose it as
\begin{equation*}
\begin{split}
I_{43}&=\langle{\mathcal N}_t^b(u_{\neq} \p_x\theta),{\mathcal N_t^b\theta}\rangle_{L^2}=K+K'\\
\text{with}\qquad K&=\langle{\mathcal N}_t^b(u_{\neq} \p_x\theta)-u_{\neq}\p_x({\mathcal N}_t^b\theta),{\mathcal N_t^b\theta}\rangle_{L^2},\\
K'&=-\langle v_{\neq}\p_y({\mathcal N}_t^b\theta),{\mathcal N}_t^b\theta\rangle_{L^2}.
\end{split}
\end{equation*}
The term $K'$ can be bounded easily,
\begin{equation}\label{S2.4eq3}
\begin{split}
|K'|&\leq \|v_{\neq}\|_{L^\infty}\|\p_y{\mathcal N}_t^b\theta\|_{L^2}\|{\mathcal N}_t^b\theta\|_{L^2}\\
&\leq \|(-\Delta)^{-\frac12}\Lambda_t^b\omega_{\neq}\|_{L^2}\|D_y|D_x|^\frac13\Lambda_t^b\theta\|_{L^2}\||D_x|^\frac13\Lambda_t^b\theta\|_{L^2}.
\end{split}
\end{equation}
For the term $K$, due to $\p_x\theta_0= \p_x{\mathcal N}_t^b\theta_0=0$,
\begin{equation*}
\begin{split}
K&=\langle{\mathcal N}_t^b(u_{\neq} \p_x\theta_{\neq})-u_{\neq}\p_x({\mathcal N}_t^b\theta_{\neq}),{\mathcal N_t^b\theta_{\neq}}\rangle_{L^2}.
\end{split}
\end{equation*}
By Plancherel's theorem and \eqref{S2.3eq1},
\begin{equation*}
\begin{split}
K&=-\sum_{k,l}\iint\big({\mathcal N}_t^b(k,\xi)-{\mathcal N}_t^b(k-l,\xi-\eta)\big)\frac{\eta(k-l)}{l^2+\eta^2}\widehat\omega_{\neq}(l,\eta)\\
&\qquad\qquad\qquad\qquad\qquad\cdot\widehat\theta_{\neq}(k-l,\xi-\eta){\mathcal N}_t^b(k,\xi)\overline{\widehat\theta_{\neq}(k,\xi)}d\xi d\eta\\
&=K_1+K_2+K_3+K_4,
\end{split}
\end{equation*}
where, for $i=1,2,3,4$,
\begin{equation*}
\begin{split}
K_i&=-\sum_{(k,l)\in A_i}\iint\big({\mathcal N}_t^b(k,\xi)-{\mathcal N}_t^b(k-l,\xi-\eta)\big)\frac{\eta(k-l)}{l^2+\eta^2}\widehat\omega_{\neq}(l,\eta)\\
&\qquad\qquad\qquad\qquad\qquad\cdot\widehat\theta_{\neq}(k-l,\xi-\eta){\mathcal N}_t^b(k,\xi)\overline{\widehat\theta_{\neq}(k,\xi)}d\xi d\eta
\end{split}
\end{equation*}
with $A_i$ defined in \eqref{S2.3eqA}. For any $k\neq0$,
\beno
&&|\p_\xi{\mathcal N}_t^b(k,\xi)|\leq (\nu^\frac13+|k|^{-\frac23})\Lambda_t^b(k,\xi),\\
&& |\p_{k}{\mathcal N}_t^b(k,\xi)|\leq (|k|^{-\frac23}+|k|^{-\frac53}|\xi|)\Lambda_t^{b}(k,\xi).
\eeno
When $k>0,k-l>0$, using Taylor's formula, we have
\begin{equation*}
\begin{split}
&|{\mathcal N}_t^b(k,\xi)-{\mathcal N}_t^b(k-l,\xi-\eta)|\\
&\qquad\leq \big(\nu^\frac13|\eta|+\frac{|l|+|\xi-\eta|+|\xi|}{\min(k-l,k)^\frac23}\big)\big(\Lambda_t^b(k-l,\xi-\eta)+\Lambda_t^b(l,\eta)\big).
\end{split}
\end{equation*}
Therefore, by the convolution inequality,
\beno
|K_1^{(1)}|&=& \Big|\sum_{(k,l)\in A_1\atop l>0}\iint\big({\mathcal N}_t^b(k,\xi)-{\mathcal N}_t^b(k-l,\xi-\eta)\big)\frac{\eta(k-l)}{l^2+\eta^2}\widehat\omega_{\neq}(l,\eta)\\
&&\qquad\qquad\qquad\qquad\qquad\quad\cdot\widehat\theta_{\neq}(k-l,\xi-\eta){\mathcal N}_t^b(k,\xi)\overline{\widehat\theta_{\neq}(k,\xi)}d\xi d\eta\Big|\\
&\leq&\sum_{(k,l)\in A_1\atop l>0}\iint\big(\nu^\frac13|\eta|+\frac{|l|+|\xi-\eta|+|\xi|}{(k-l)^\frac23}\big)\big(\Lambda_t^b(k-l,\xi-\eta)+\Lambda_t^b(l,\eta)\big)\\
&&\qquad\qquad\qquad\qquad\cdot\frac{\eta(k-l)}{l^2+\eta^2}|\widehat\omega_{\neq}(l,\eta)\widehat\theta_{\neq}(k-l,\xi-\eta){\mathcal N}_t^b\widehat\theta_{\neq}(k,\xi)|d\xi d\eta\\
&\leq&\sum_{(k,l)\in A_1\atop l>0}\iint\big(\nu^\frac13(k-l)+(k-l)^\frac13+(|\xi-\eta|+|\xi|)(k-l)^\frac13\big)\\
&&\qquad\cdot\big(\Lambda_t^b(k-l,\xi-\eta)+\Lambda_t^b(l,\eta)\big)
|\widehat\omega_{\neq}(l,\eta)\widehat\theta_{\neq}(k-l,\xi-\eta){\mathcal N}_t^b\widehat\theta_{\neq}(k,\xi)|d\xi d\eta\\
&\leq& \|\widehat \omega_{\neq}\|_{L^1}\big(\nu^\frac13\||D_x|^\frac23\Lambda_t^b\theta_{\neq}\|_{L^2}\||D_x|^\frac13{\mathcal N}_t^b\theta_{\neq}\|_{L^2}+\||D_x|^\frac13\Lambda_t^b\theta_{\neq}\|_{L^2}\|{\mathcal N}_t^b\theta_{\neq}\|_{L^2}\\
&&\qquad\qquad\qquad+\|D_y|D_x|^\frac13\Lambda_t^b\theta_{\neq}\|_{L^2}\|{\mathcal N}_t^b\theta_{\neq}\|_{L^2}+\||D_x|^\frac13\Lambda_t^b\theta_{\neq}\|_{L^2}\|D_y{\mathcal N}_t^b\theta_{\neq}\|_{L^2}\big)\\
&&\qquad+\|\Lambda_t^b\omega_{\neq}\|_{L^2}\big(\nu^\frac13\|\widehat{|D_x|^\frac23\theta_{\neq}}\|_{L^1}\||D_x|^\frac13{\mathcal N}_t^b\theta_{\neq}\|_{L^2}+\|\widehat{|D_x|^\frac13\theta_{\neq}}\|_{L^1}\|{\mathcal N}_t^b\theta_{\neq}\|_{L^2}\\
&&\qquad\qquad\qquad\qquad+\|\widehat{D_y|D_x|^\frac13\theta_{\neq}}\|_{L^1}\|{\mathcal N}_t^b\theta_{\neq}\|_{L^2}+\|\widehat{|D_x|^\frac13\theta_{\neq}}\|_{L^1}\|D_y{\mathcal N}_t^b\theta_{\neq}\|_{L^2}\big)\\
&\leq& \|\Lambda_t^b\omega_{\neq}\|_{L^2}\big(\||D_x|^\frac23\Lambda_t^b\theta_{\neq}\|_{L^2}^2+\|D_y|D_x|^\frac13\Lambda_t^b\theta_{\neq}\|_{L^2}\||D_x|^\frac13\Lambda_t^b\theta_{\neq}\|_{L^2}\big),
\eeno
where we used $k-l\leq (k-l)^\frac23k^\frac13$ for $k>0,k-l>0,l>0$. Using the fact that, for $k>0, l<0$,
$$k-l\leq (k-l)^\frac23(k^\frac13+|l|^\frac13),\quad \frac{k-l}{k^\frac23}\leq 2(k-l)^\frac23|l|^\frac13,\quad
\frac{k-l}{k^\frac23|l|}\leq 2(k-l)^\frac13, $$
we have
\beno
|K_1^{(2)}|&=&\Big|\sum_{(k,l)\in A_1\atop l<0}\iint\big({\mathcal N}_t^b(k,\xi)-{\mathcal N}_t^b(k-l,\xi-\eta)\big)\frac{\eta(k-l)}{l^2+\eta^2}\widehat\omega_{\neq}(l,\eta)\\
&&\qquad\qquad\qquad\qquad\quad\qquad\cdot\widehat\theta_{\neq}(k-l,\xi-\eta){\mathcal N}_t^b(k,\xi)\overline{\widehat\theta_{\neq}(k,\xi)}d\xi d\eta\Big|\\
&\leq&\sum_{(k,l)\in A_1\atop l>0}\iint\big(\nu^\frac13|\eta|+\frac{|l|+|\xi-\eta|+|\xi|}{k^\frac23}\big)\big(\Lambda_t^b(k-l,\xi-\eta)+\Lambda_t^b(l,\eta)\big)\\
&&\qquad\qquad\qquad\qquad\qquad\cdot\frac{\eta(k-l)}{l^2+\eta^2}|\widehat\omega_{\neq}(l,\eta)\widehat\theta_{\neq}(k-l,\xi-\eta){\mathcal N}_t^b\widehat\theta_{\neq}(k,\xi)|d\xi d\eta\\
&\leq& \sum_{(k,l)\in A_1\atop l>0}\iint\big(\nu^\frac13(k-l)^\frac23k^\frac13+(k-l)^\frac23|l|^\frac13+(|\xi-\eta|+|\xi|)(k-l)^\frac13\big)\\
&&\qquad\quad\cdot\big(\Lambda_t^b(k-l,\xi-\eta)+\Lambda_t^b(l,\eta)\big)
|\widehat\omega_{\neq}(l,\eta)\widehat\theta_{\neq}(k-l,\xi-\eta){\mathcal N}_t^b\widehat\theta_{\neq}(k,\xi)|d\xi d\eta\\
&\leq& \nu^\frac13\big(\||D_x|^\frac23\Lambda_t^b\theta_{\neq}\|_{L^2}\|\widehat\omega_{\neq}\|_{L^1}\||D_x|^\frac13{\mathcal N}_t^b\theta_{\neq}\|_{L^2}
+\|\Lambda_t^b\omega_{\neq}\|_{L^2}\|\widehat{|D_x|^\frac23\theta_{\neq}}\|_{L^1}\||D_x|^\frac13{\mathcal N}_t^b\theta_{\neq}\|_{L^2}\big)\\
&&\qquad+\||D_x|^\frac23\Lambda_t^b\theta_{\neq}\|_{L^2}\|\widehat{|D_x|^\frac13\omega_{\neq}}\|_{L^1}\|{\mathcal N}_t^b\theta_{\neq}\|_{L^2}
+\||D_x|^\frac13\Lambda_t^b\omega_{\neq}\|_{L^2}\|\widehat{|D_x|^\frac23\theta_{\neq}}\|_{L^1}\|{\mathcal N}_t^b\theta_{\neq}\|_{L^2}\\
&&\qquad+\|\widehat\omega_{\neq}\|_{L^1}\big(\|D_y|D_x|^\frac13\Lambda_t^b\theta_{\neq}\|_{L^2}\|{\mathcal N}_t^b\theta_{\neq}\|_{L^2}+\||D_x|^\frac13\Lambda_t^b\theta_{\neq}\|_{L^2}\|D_y{\mathcal N}_t^b\theta_{\neq}\|_{L^2}\big)\\
&&\qquad+\|\Lambda_t^b\omega_{\neq}\|_{L^2}\big(\|\widehat{D_y|D_x|^\frac13\theta_{\neq}}\|_{L^1}\|{\mathcal N}_t^b\theta_{\neq}\|_{L^2}+\|\widehat{|D_x|^\frac13\theta_{\neq}}\|_{L^1}\|D_y{\mathcal N}_t^b\theta_{\neq}\|_{L^2}\big)\\
&\leq& \|\Lambda_t^b\omega_{\neq}\|_{L^2}\big(\nu^\frac13\||D_x|^\frac23\Lambda_t^b\theta_{\neq}\|_{L^2}^2+\|D_y|D_x|^\frac13\Lambda_t^b\theta_{\neq}\|_{L^2}\||D_x|^\frac13\Lambda_t^b\theta_{\neq}\|_{L^2}\big)\\
&&\qquad+\||D_x|^\frac13\Lambda_t^b\omega_{\neq}\|_{L^2}\||D_x|^\frac23\Lambda_t^b\theta_{\neq}\|_{L^2}\||D_x|^\frac13\Lambda_t^b\theta_{\neq}\|_{L^2}.
\eeno
This completes the estimates for $K_1$,
\begin{equation*}
\begin{split}
K_1&\leq\|\Lambda_t^b\omega_{\neq}\|_{L^2}\big(\||D_x|^\frac23\Lambda_t^b\theta_{\neq}\|_{L^2}^2+\|D_y|D_x|^\frac13\Lambda_t^b\theta_{\neq}\|_{L^2}\||D_x|^\frac13\Lambda_t^b\theta_{\neq}\|_{L^2}\big)\\
&\qquad+\||D_x|^\frac13\Lambda_t^b\omega_{\neq}\|_{L^2}\||D_x|^\frac23\Lambda_t^b\theta_{\neq}\|_{L^2}\||D_x|^\frac13\Lambda_t^b\theta_{\neq}\|_{L^2}.
\end{split}
\end{equation*}
We can estimate the term $K_2$ in the same way. To estimate $K_3$ and $K_4$, we
notice that when $k>0,k-l<0$ or $k<0, k-l>0$, $|k-l|<|l|$. Therefore,
\beno
|K_3+K_4| &\leq& \sum_{(k,l)\in A_3\cup A_4}\iint\big({\mathcal N}_t^b(k,\xi)+{\mathcal N}_t^b(k-l,\xi-\eta)\big)\\
&& \qquad \qquad\qquad |\widehat \omega_{\neq}(l,\eta)\widehat{\theta_{\neq}}(k-l,\xi-\eta){\mathcal N}_t^b\widehat{\theta_{\neq}}(k,\xi)|d\xi d\eta\\
&\leq& \sum_{(k,l)\in A_3\cup A_4}\iint\big((|k|^\frac13+|k-l|^\frac13)\Lambda_t^b(k-l,\xi-\eta)+|k|^\frac13\Lambda_t^b(k,\xi)\big)\\
&&\qquad\qquad\qquad |\widehat \omega_{\neq}(l,\eta)\widehat{\theta_{\neq}}(k-l,\xi-\eta){\mathcal N}_t^b\widehat\theta_{\neq}(k,\xi)|d\xi d\eta\\
&\leq& \|\widehat\omega_{\neq}\|_{L^1}\big(\|\Lambda_t^b\theta_{\neq}\|_{L^2}\||D_x|^\frac13{\mathcal N}_t^b\theta_{\neq}\|_{L^2}+\||D_x|^\frac13\Lambda_t^b\theta_{\neq}\|_{L^2}\|{\mathcal N}_t^b\theta_{\neq}\|_{L^2}\big)\\
&&\qquad\qquad+\|\Lambda_t^b\omega_{\neq}\|_{L^2}\|\widehat\theta_{\neq}\|_{L^1}\||D_x|^\frac13{\mathcal N}_t^b\theta_{\neq}\|_{L^2}\\
&\leq& \|\Lambda_t^b\omega_{\neq}\|_{L^2}\big(\|\Lambda_t^b\theta_{\neq}\|_{L^2}\||D_x|^\frac23\Lambda_t^b\theta_{\neq}\|_{L^2}+\||D_x|^\frac13\Lambda_t^b\theta_{\neq}\|_{L^2}^2\big).
\eeno
Summarizing the estimates, we achieve that
\begin{equation*}\label{S2.4eq4}
\begin{split}
|K|&\leq\|\Lambda_t^b\omega_{\neq}\|_{L^2}\big(\||D_x|^\frac23\Lambda_t^b\theta_{\neq}\|_{L^2}^2+\|D_y|D_x|^\frac13\Lambda_t^b\theta_{\neq}\|_{L^2}\||D_x|^\frac13\Lambda_t^b\theta_{\neq}\|_{L^2}\big)\\
&\qquad+\||D_x|^\frac13\Lambda_t^b\omega_{\neq}\|_{L^2}\||D_x|^\frac23\Lambda_t^b\theta_{\neq}\|_{L^2}\||D_x|^\frac13\Lambda_t^b\theta_{\neq}\|_{L^2}.
\end{split}
\end{equation*}
Together with \eqref{S2.4eq3}, we obtain
\begin{equation}\label{S2.4eq5}
\begin{split}
|I_{43}|&\leq\|\Lambda_t^b\omega_{\neq}\|_{L^2}\big(\||D_x|^\frac23\Lambda_t^b\theta\|_{L^2}^2+\|D_y|D_x|^\frac13\Lambda_t^b\theta\|_{L^2}\||D_x|^\frac13\Lambda_t^b\theta\|_{L^2}\big)\\
&\qquad+\||D_x|^\frac13\Lambda_t^b\omega\|_{L^2}\||D_x|^\frac23\Lambda_t^b\theta\|_{L^2}\||D_x|^\frac13\Lambda_t^b\theta\|_{L^2}.
\end{split}
\end{equation}
Then by \eqref{S2.4eq1}, \eqref{S2.4eq2} and \eqref{S2.4eq5}, we finish the estimates for $I_{4}$,
\begin{equation}\label{S2.4eq6}
\begin{split}
|I_{4}|&\leq\|\Lambda_t^b\omega\|_{L^2}\||D_x|^\frac23\Lambda_t^b\theta\|_{L^2}^2+\|\Lambda_t^b\omega_{\neq}\|_{L^2}\|D_y|D_x|^\frac13\Lambda_t^b\theta\|_{L^2}\||D_x|^\frac13\Lambda_t^b\theta\|_{L^2}\\
&\qquad+ \||D_x|^\frac13(-\Delta)^{-\frac12}\Lambda_t^b\omega_{\neq}\|_{L^2}\|D_y\Lambda_t^b\theta\|_{L^2}\||D_x|^\frac13\Lambda_t^b\theta\|_{L^2}\\
&\qquad+\||D_x|^\frac13\Lambda_t^b\omega\|_{L^2}\||D_x|^\frac23\Lambda_t^b\theta\|_{L^2}\||D_x|^\frac13\Lambda_t^b\theta\|_{L^2}.
\end{split}
\end{equation}
Integrating \eqref{S2.2eq5}, \eqref{S2.2eq6} and \eqref{S2.2eq7} in time and making use of
the upper bounds in  \eqref{S2.2eq8}, \eqref{S2.3eq9}, \eqref{S2.3eq10} and \eqref{S2.4eq6}, we obtain,  for $b>\frac43$,
\ben
&&\|\Lambda_t^b\omega\|_{L_t^\infty(L^2)}^2+\nu\|D_y\Lambda_t^b\omega\|_{L_t^2(L^2)}^2
+\frac18\nu^\frac13\| |D_x|^\frac13\Lambda_t^b\omega\|_{L_t^2(L^2)}^2+\|(-\Delta)^{-\frac12}\Lambda_t^b\omega_{\neq}\|_{L_t^2(L^2)}^2 \notag\\
&&\leq2\|\Lambda_0^b\omega^{(0)}\|_{L^2}^2+8\nu^{-\frac13}\||D_x|^\frac23\Lambda_t^b\theta\|_{L_t^2(L^2)}^2
+ C_1 \|\Lambda_t^b\omega\|_{L_t^\infty(L^2)}\||D_x|^\frac13\Lambda_t^b\omega\|_{L_t^2(L^2)}^2
\notag \\
&&\quad+ C_1\|\Lambda_t^b\omega\|_{L_t^\infty(L^2)}\|\Lambda_t^b\omega_{\neq}\|_{L_t^2(L^2)}\|D_y\Lambda_t^b\omega\|_{L_t^2(L^2)},\label{S2.5eq1}
\een
\ben
&&\|\Lambda_t^b\theta\|_{L_t^\infty(L^2)}^2+\nu\|D_y\Lambda_t^b \theta\|_{L_t^2(L^2)}^2
+\frac14\nu^\frac13\||D_x|^\frac13\Lambda_t^b\theta\|_{L_t^2(L^2)}^2+\|(-\Delta)^{-\frac12}\Lambda_t^b\theta_{\neq}\|_{L_t^2(L^2)}^2 \notag\\
&&\leq 2\|\Lambda_0^b\theta^{(0)}\|_{L^2}^2+C_2\|\Lambda_t^b\omega\|_{L_t^\infty(L^2)}\||D_x|^\frac13\Lambda_t^b\theta\|_{L_t^2(L^2)}^2 \notag \\
&&\quad+C_2\||D_x|^\frac13\Lambda_t^b\omega\|_{L_t^2(L^2)}\||D_x|^\frac13\Lambda_t^b\theta\|_{L_t^2(L^2)}\|\Lambda_t^b\theta_{\neq}\|_{L_t^\infty(L^2)} \notag\\
&&\quad
+C_2\|\Lambda_t^b\omega_{\neq}\|_{L_t^2(L^2)}\|\Lambda_t^b\theta\|_{L_t^\infty(L^2)}\big(\|D_y\Lambda_t^b\theta\|_{L_t^2(L^2)}+\|\Lambda_t^b\theta_{\neq}\|_{L_t^2(L^2)}\big) \label{S2.5eq2}
\een
and
\ben
&&\||D_x|^\frac13\Lambda_t^b\theta\|_{L_t^\infty(L^2)}^2+\nu\|D_y|D_x|^\frac13\Lambda_t^b \theta\|_{L_t^2(L^2)}^2+\frac14\nu^\frac13\||D_x|^\frac23\Lambda_t^b\theta\|_{L_t^2(L^2)}^2
\notag\\
&&\quad+\|(-\Delta)^{-\frac12}|D_x|^\frac13\Lambda_t^b\theta_{\neq}\|_{L_t^2(L^2)}^2\notag\\
&&\leq2\||D_x|^\frac13\Lambda_0^b\theta^{(0)}\|_{L^2}^2+C_3\|\Lambda_t^b\omega\|_{L_t^\infty(L^2)}\||D_x|^\frac23\Lambda_t^b\theta\|_{L_t^2(L^2)}^2 \notag\\
&&\quad+C_3\|\Lambda_t^b\omega_{\neq}\|_{L_t^2(L^2)}\big(\|D_y|D_x|^\frac13\Lambda_t^b\theta\|_{L_t^2(L^2)}+\|D_y\Lambda_t^b\theta\|_{L_t^2(L^2)}\big)\||D_x|^\frac13\Lambda_t^b\theta\|_{L_t^\infty(L^2)} \notag\\
&&\quad+C_3\||D_x|^\frac13\Lambda_t^b\omega\|_{L_t^2(L^2)}\||D_x|^\frac23\Lambda_t^b\theta\|_{L_t^2(L^2)}\||D_x|^\frac13\Lambda_t^b\theta\|_{L_t^\infty(L^2)}.\label{S2.5eq3}
\een

\vskip .1in
With these {\it a priori} bounds at our disposal, our final step is to
prove Theorem \ref{Non2} via the bootstrap argument. We assume  that the initial data $(\omega^{(0)},\theta^{(0)})$ satisfies
\begin{equation*}
\|\omega^{(0)}\|_{H^b}\leq \varepsilon\nu^\beta,\quad \|\theta^{(0)}\|_{H^b}\leq\varepsilon\nu^\alpha,\quad \||D_x|^\frac13\theta^{(0)}\|_{H^b}\leq\varepsilon\nu^\delta,
\end{equation*}
where $\varepsilon>0$ is sufficiently small, and $\beta,\alpha,\delta$ are constants satisfying
\begin{equation}\label{S2.5eqbeta}
\beta\geq\frac23,\qquad \delta\geq\beta+\frac13,\qquad \alpha\geq\delta-\beta+\frac23.
\end{equation}
The bootstrap argument starts with the ansatz that, for $T\le \infty$, the
solution $(\om, \theta)$ of (\ref{vt2}) satisfies
\ben
&&\|\Lambda_t^b\omega\|_{L_T^\infty(L^2)}+\nu^\frac12\|D_y\Lambda_t^b\omega\|_{L_T^2(L^2)}
+\nu^\frac16\| |D_x|^\frac13\Lambda_t^b\omega\|_{L_T^2(L^2)} \notag\\
&& \qquad\qquad \qquad\qquad \qquad\qquad +\|(-\Delta)^{-\frac12}\Lambda_t^b\omega_{\neq}\|_{L_T^2(L^2)}\leq C\varepsilon\nu^\beta, \label{S2.5eq5}\\
&&\|\Lambda_t^b\theta\|_{L_T^\infty(L^2)}+\nu^\frac12\|D_y\Lambda_t^b \theta\|_{L_T^2(L^2)}
+\nu^\frac16\||D_x|^\frac13\Lambda_t^b\theta\|_{L_T^2(L^2)} \notag\\
&& \qquad\qquad\qquad\qquad \qquad\qquad  +\|(-\Delta)^{-\frac12}\Lambda_t^b\theta_{\neq}\|_{L_T^2(L^2)}\leq C\varepsilon\nu^\alpha,\label{S2.5eq6}
\\
&&\||D_x|^\frac13\Lambda_t^b\theta\|_{L_T^\infty(L^2)}+\nu^\frac12\|D_y|D_x|^\frac13\Lambda_t^b \theta\|_{L_T^2(L^2)}+\nu^\frac16\||D_x|^\frac23\Lambda_t^b\theta\|_{L_T^2(L^2)}\notag\\
&&  \qquad\qquad\qquad\qquad \qquad\qquad  +\|(-\Delta)^{-\frac12}|D_x|^\frac13\Lambda_t^b\theta_{\neq}\|_{L_T^2(L^2)}\leq \widetilde C\varepsilon\nu^\delta.\label{S2.5eq7}
\een
The constants $\varepsilon>0, C, \widetilde C>0$ are suitably selected and will be specified later. Making use of the bounds in (\ref{S2.5eq1}), (\ref{S2.5eq2}) and (\ref{S2.5eq3}), we show that (\ref{S2.5eq5}), (\ref{S2.5eq6}) and (\ref{S2.5eq7}) actually holds  with $C$ replaced by $C/2$ and $\widetilde C$ replaced by $\widetilde C/2$. The bootstrap argument then implies that
$T=+\infty$ and (\ref{S2.5eq5}), (\ref{S2.5eq6}) and (\ref{S2.5eq7}) holds for all time.

\vskip .1in
In fact, if we substitute the ansatz given by (\ref{S2.5eq5}), (\ref{S2.5eq6}) and (\ref{S2.5eq7}) in the {\it a priori} estimates in (\ref{S2.5eq1}), (\ref{S2.5eq2}) and (\ref{S2.5eq3}), we find
\beno
&&\|\Lambda_t^b\omega\|_{L_t^\infty(L^2)}^2+\nu\|D_y\Lambda_t^b\omega\|_{L_t^2(L^2)}^2
+\frac18\nu^\frac13\| |D_x|^\frac13\Lambda_t^b\omega\|_{L_t^2(L^2)}^2+\|(-\Delta)^{-\frac12}\Lambda_t^b\omega_{\neq}\|_{L_t^2(L^2)}^2 \notag\\
&&\qquad\qquad \leq 2\varepsilon^2\nu^{2\beta}+8\widetilde C^2\varepsilon^2\nu^{2\delta-\frac23}+C_1C^3\varepsilon^3(\nu^{3\beta-\frac13}+\nu^{3\beta-\frac23}),\\
&&\|\Lambda_t^b\theta\|_{L_t^\infty(L^2)}^2+\nu\|D_y\Lambda_t^b \theta\|_{L_t^2(L^2)}^2
+\frac14\nu^\frac13\||D_x|^\frac13\Lambda_t^b\theta\|_{L_t^2(L^2)}^2+\|(-\Delta)^{-\frac12}\Lambda_t^b\theta_{\neq}\|_{L_t^2(L^2)}^2 \notag\\
&& \qquad\qquad \leq 2\varepsilon^2\nu^{2\alpha}+C_2C^3\varepsilon^3(3\nu^{\beta+2\alpha-\frac13}+\nu^{\beta+2\alpha-\frac23}),\\
&& \||D_x|^\frac13\Lambda_t^b\theta\|_{L_t^\infty(L^2)}^2+\nu\|D_y|D_x|^\frac13\Lambda_t^b \theta\|_{L_t^2(L^2)}^2+\frac14\nu^\frac13\||D_x|^\frac23\Lambda_t^b\theta\|_{L_t^2(L^2)}^2
\notag\\
&&\quad \qquad\qquad \qquad\qquad \qquad \qquad\qquad  +\|(-\Delta)^{-\frac12}|D_x|^\frac13\Lambda_t^b\theta_{\neq}\|_{L_t^2(L^2)}^2\notag\\
&&\qquad\qquad \leq 2\varepsilon^2\nu^{2\delta}+C_3C\widetilde C\varepsilon^3(2\widetilde C\nu^{\beta+2\delta-\frac13}+\widetilde C\nu^{2\delta+\beta-\frac23}+C\nu^{\beta+\alpha+\delta-\frac23}).
\eeno
If we recall \eqref{S2.5eqbeta} and choose
$$\widetilde C\geq8,\quad C\geq32\widetilde C,\quad \varepsilon=\min\big(\frac1{128C_1C},\frac{1}{128C_2C},\frac{\widetilde C}{64C_3C}\big),$$
then (\ref{S2.5eq5}-\ref{S2.5eq6}) hold with $C$ replaced by $C/2$ and \eqref{S2.5eq7} holds with $\widetilde C$ replaced by $\widetilde C/2$.
This completes the proof of Theorem \ref{Non2}.
\end{proof}

\vskip .3in
\section*{\bf Acknowledgments}

All the authors are supported by K. C. Wong Education Foundation.
Wu was partially supported by the National Science Foundation of USA under grant DMS 1624146 and the AT\&T Foundation at Oklahoma State University.
Zhang was  partially supported
by NSF of China under Grants   11371347 and 11688101,  and innovation grant from National Center for
Mathematics and Interdisciplinary Sciences.

\vskip .3in
\bibliographystyle{plain}

\end{document}